\definecolor{darkblue}{rgb}{0.1,0,0.5}
\theoremstyle{theorem}
\newtheorem{theorem}{Theorem}[section]
\newtheorem{lemma}[theorem]{Lemma}
\newtheorem{proposition}[theorem]{Proposition}
\newtheorem{corollary}[theorem]{Corollary}
\newtheorem{definition}[theorem]{Definition}
\theoremstyle{remark}
\newtheorem{example}[theorem]{Example}
\newtheorem{remark}[theorem]{Remark}
\newtheorem{problem}[theorem]{Problem}
\newtheorem{conjecture}[theorem]{Conjecture}
\numberwithin{equation}{section}
\DeclareMathOperator{\Tel}{Tel}
\DeclareMathOperator{\Id}{Id}
\DeclareMathOperator{\precision}{prec} % precision
\DeclareMathOperator{\poly}{poly}
\DeclareMathOperator{\image}{Im}
\DeclareMathOperator{\Top}{top}
\renewcommand{\AA}{\mathbb{A}}
\newcommand{\DD}{\mathbb{D}}
\newcommand{\NN}{\mathbb{N}}
\newcommand{\PP}{\mathbb{P}}
\newcommand{\QQ}{\mathbb{Q}}
\newcommand{\RR}{\mathbb{R}}
\newcommand{\ZZ}{\mathbb{Z}}
\newcommand{\ccP}{\textsf{\upshape P}}
\newcommand{\ccNP}{\textsf{\upshape NP}}
\newcommand{\ccPpoly}{\textsf{\upshape P/poly}}
\newcommand{\ccB}{\textsf{\upshape B}}
\newcommand{\ccD}{\textsf{\upshape D}}
\newcommand{\ccG}{\textsf{\upshape G}}
\newcommand{\ccT}{\textsf{\upshape T}}
\newcommand{\ccH}{\textsf{\upshape H}}
\newcommand{\ccM}{\textsf{\upshape M}}
\newcommand{\ccN}{\textsf{\upshape N}}
\newcommand{\ccK}{\textsf{\upshape K}}
\newcommand{\ccV}{\textsf{\upshape V}}
\newcommand{\ccPPT}{\textsf{\upshape PPT}}
\newcommand{\ccTELIC}{\textsf{\upshape TELIC}}
\newcommand{\cD}{\mathcal{D}}
\newcommand{\cP}{\mathcal{P}}
\title[Correspondences in computational and dynamical complexity I]{Correspondences in computational and dynamical complexity I}
\author[S. Everett]{Samuel Everett}
\address{University of Chicago}
\email{same@uchicago.edu}
\date{}
\begin{document}

%\maketitle

\begin{abstract}We begin development of a method for studying dynamical systems using concepts from computational complexity theory. We associate families of decision problems, called telic problems, to dynamical systems of a certain class. These decision problems formalize finite-time reachability questions for the dynamics with respect to natural coarse-grainings of state space. Our main result shows that complexity-theoretic lower bounds have dynamical consequences: if a system admits a telic problem for which every decider runs in time $2^{\Omega(n)}$, then it must have positive topological entropy. This result and others lead to methods for classifying dynamical systems through proving bounds on the runtime of algorithms solving their associated telic problems, or by constructing polynomial-time reductions between telic problems coming from distinct dynamical systems.\end{abstract}

\maketitle
\tableofcontents
\pagebreak

\setstretch{1.06}

%%%%%%%%%%%%%%%%%%%%%%%%%%%%%%%%%%%%%%%%%%%%%%%%%%%%%%%%%%%%%%%%%%%%%%%
%%%%%%%%%%%%%%%%%%%%%%%%%%%%%%%%%%%%%%%%%%%%%%%%%%%%%%%%%%%%%%%%%%%%%%%
\section{Introduction}
%%%%%%%%%%%%%%%%%%%%%%%%%%%%%%%%%%%%%%%%%%%%%%%%%%%%%%%%%%%%%%%%%%%%%%%
%%%%%%%%%%%%%%%%%%%%%%%%%%%%%%%%%%%%%%%%%%%%%%%%%%%%%%%%%%%%%%%%%%%%%%%

The aim of the three papers composing this work is to develop a method for studying dynamical systems using ideas from computational complexity theory, and conversely.
Our work centers around the study of a class of search and decision problems intrinsic to dynamical systems.
We show that there is a bidirectional relationship between the complexity of a dynamical system and the tractability of the problems associated with the system.
More explicitly, one may prove a dynamical system possesses certain properties by proving bounds on the runtime of algorithms solving the problems associated with that system.
In the other direction, the structure of a dynamical system limits the classes of algorithms solving its associated search and decision problems.
The present paper as well as the third \cite{everett3} in the collection focus on the former direction, while the second paper \cite{everett2} studies the latter direction; the three papers of this work can be read in any order.

%%%%%%%%%%%%%%%%%%%%%%%%%%%%%%%%%%%%%%%%%%%%%%%%%%%%%%%%%%%%%%%%%%%%%%%
\subsection{Overview of the main results of this paper}
%%%%%%%%%%%%%%%%%%%%%%%%%%%%%%%%%%%%%%%%%%%%%%%%%%%%%%%%%%%%%%%%%%%%%%%
The main results of this paper require certain terminology which we introduce informally now, postponing the formal definitions for the next sections.

Let the pair $(X, T)$ denote a dynamical system, where $X\subseteq \RR^m$ is the state space and $T:X\rightarrow X$ is a transformation. By a \emph{topological dynamical system} we mean $X\subset \RR^m$ is compact and $T$ is continuous.
We shall say $(X, T)$ is \emph{efficiently discretizable} (\cref{defDynDisc}) if there exists a polynomial-time Turing machine that on input $x \in X \cap \QQ^m$, and positive integers $k$ and $r$ encoded in unary, returns $w \in X\cap \QQ^m$ for which $\|w-T^k(x)\|_2 \leq 1/2^r$.
Furthermore, if the map is invertible and the inverse is efficiently discretizable, we say the dynamical system is \emph{efficiently discretizable with inverse.}
Many standard examples of dynamical systems are efficiently discretizable, such as rigid circle rotations, the logistic map, the H\'enon map, the Chirikov standard map, and a broad class of Lipschitz maps more generally (see \cref{secDiscretization} for detailed discussion).

Recall that a \emph{decision problem} in computational complexity theory is a computational task that asks for a yes-or-no answer about whether a given input satisfies a specified property. A Turing machine \emph{decides} or \emph{solves} a decision problem if it accepts positive instances and rejects negative instances.
We now give an informal description of \emph{telic problems},\footnote{The word \emph{telic} comes from the Greek word ``telos" for end/goal, or ``reaching a goal," which aptly characterizes the problems considered in this paper as a class of reachability problem.} the central decision problem considered in this paper (\cref{deftelic}).
Fix an efficiently discretizable dynamical system $(X, T)$.
Given an integer $n\geq 1$, a description of a closed ball $B \subset X$, and a description of a function $g:\{0,1\}^n\rightarrow X$ carrying the length $n$ binary strings into points in $X$ (\cref{defFeasibleQuery}), an instance of a \emph{telic problem coming from $(X, T)$} is the problem of deciding whether there is an $s \in \{0,1\}^n$ such that $T^{n}(g(s)) \in B$. Any efficiently discretizable dynamical system admits an associated class of telic problems, each specified by a family of functions $\{g^{(n)}\}_{n=1}^\infty$ and closed balls $\{B^{(n)}\}_{n=1}^\infty$. We say a telic problem is \emph{easy} or \emph{tractable} if there exists a polynomial-time Turing machine that decides it. We shall say the telic problem is \emph{hard} or \emph{intractable} otherwise.

There is a close connection between the behavior of a dynamical system and the tractability of its associated telic problems, as demonstrated in the following theorem.

\begin{theorem}[\cref{thm:entropyHardness}]\label{thm:entropyHardness1}
Let $(X, T)$ be an efficiently discretizable topological dynamical system. If $(X, T)$ admits a telic problem such that every decider for the language runs in time $2^{\Omega(n)}$, then the topological entropy of the system is positive.
\end{theorem}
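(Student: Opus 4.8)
The plan is to prove the contrapositive: assuming the topological entropy $h_{\mathrm{top}}(T)$ vanishes, I will exhibit, for an arbitrary telic problem coming from $(X,T)$, a decider whose running time is $2^{o(n)}$, and which therefore fails to run in time $2^{\Omega(n)}$. Fix such a telic problem, specified by families $\{g^{(n)}\}$ and $\{B^{(n)}\}$; write $\rho_n$ for the radius of $B^{(n)}$ and set $\epsilon_n = \rho_n/3$. The first step is to convert vanishing entropy into a finite-resolution combinatorial bound. Since $X$ is compact and $T$ continuous, I will use the Bowen--Dinaburg description: with $s(N,\epsilon)$ the maximal cardinality of an $(N,\epsilon)$-separated set in the metric $d_N(x,y)=\max_{0\le i\le N}\|T^i x - T^i y\|_2$, one has $\lim_{\epsilon\to 0}\limsup_{N\to\infty}\tfrac1N\log s(N,\epsilon)=h_{\mathrm{top}}(T)=0$. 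Because $\epsilon\mapsto \limsup_N \tfrac1N\log s(N,\epsilon)$ is nonnegative and nonincreasing in $\epsilon$, it must vanish at every fixed resolution; hence $s(N,\epsilon)=2^{o(N)}$ for each fixed $\epsilon>0$. Equivalently, fixing a finite cover $\mathcal U=\{U_1,\dots,U_M\}$ of $X$ by pieces of diameter $<\epsilon$, the number of nonempty itineraries in the join $\bigvee_{i=0}^{n}T^{-i}\mathcal U$ grows subexponentially, since $\tfrac1n\log\#\{\text{nonempty itineraries}\}\to h_{\mathrm{top}}(T,\mathcal U)\le h_{\mathrm{top}}(T)=0$.

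The second step is the algorithm. An itinerary $(U_{i_0},\dots,U_{i_n})$ determines, up to resolution $\epsilon_n$, the endpoint $T^n(g(s))$ through its last coordinate $U_{i_n}$. If $T^n(g(s))\in B^{(n)}$, then $g(s)$ realizes some \emph{admissible} itinerary, namely one whose final piece $U_{i_n}$ meets $B^{(n)}$; conversely, only admissible itineraries can contribute points of $B^{(n)}$. The decider enumerates the $2^{o(n)}$ nonempty itineraries, computes the forward transition relation on the $M$ pieces by applying the efficient discretizer of \cref{defDynDisc} to piece representatives (costing $\poly(n)$ per step at precision $\epsilon_n/2$), and retains the admissible ones. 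For each admissible itinerary it then certifies membership: using the feasible-query structure of $g^{(n)}$ from \cref{defFeasibleQuery} to produce candidate strings $s$ whose orbit follows that itinerary, and computing $T^n(g(s))$ to precision $\epsilon_n/2$ to test whether it genuinely lies in $B^{(n)}$. The target is that the total work is the product of the subexponential itinerary count with the $\poly(n)$ cost of each discretized computation, yielding a $2^{o(n)}$ decider.

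The main obstacle, where I expect to concentrate the work, is bounding the number of evaluations of the arbitrary query function $g^{(n)}$ by the subexponential number of admissible itineraries rather than by the trivial $2^n$; this is precisely the point at which the intrinsic orbit complexity (controlled by entropy) must be coupled to the extrinsic data $g^{(n)}$, and it is where the exact form of \cref{defFeasibleQuery} enters, since the decider must be able to group or witness strings by the itinerary of their image without scanning the whole domain. A second, technical obstacle is precision control under iteration: efficient discretization computes $T^k$ to any prescribed accuracy in polynomial time, but the piecewise re-covering at each step can amplify error, so the argument must refine the working resolution along each itinerary while keeping the total refinement (hence the running time) within the subexponential budget set by the itinerary count. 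Finally, membership in the \emph{closed} ball $B^{(n)}$ is not robust on $\partial B^{(n)}$, so I will either impose a separation promise ensuring orbit endpoints avoid a neighborhood of the boundary, or replace the exact test by a two-sided $\epsilon_n$-approximation that decides correctly off a degenerate set; reconciling this with the definition of the telic language is the remaining detail to be handled.
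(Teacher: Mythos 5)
Your strategy (contrapositive: zero entropy yields a $2^{o(n)}$ decider via subexponential itinerary counting) is genuinely different from the paper's, which argues in the forward direction that an exponential lower bound on every decider forces $2^{\Omega(n)}$ many $(n,\varepsilon_n)$-separated points among the images $g^{(n)}(s)$ of adjacent lattice points, and then feeds that counting bound into the Bowen--Dinaburg formula. But your version has a gap that I do not think is merely technical. The bound $s(N,\epsilon)=2^{o(N)}$ you extract from $h_{\mathrm{top}}(T)=0$ holds for each \emph{fixed} $\epsilon>0$, with the rate in the $o(\cdot)$ depending on $\epsilon$; the Bowen--Dinaburg definition takes $N\to\infty$ \emph{before} $\epsilon\to 0$, and this order of limits cannot be interchanged. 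Your algorithm, however, needs the cover $\mathcal U$ to have mesh comparable to $\epsilon_n=\rho_n/3$, and in the telic problem the target $B^{(n)}$ is measured at working precision $v(n)=Cn$, so $\rho_n$ can be as small as $2^{-\Theta(n)}$. At that resolution, zero entropy gives no subexponential control: for the zero-entropy interval map $T(x)=\min(2x,1)$, the set $\{k2^{-2n}: 0\le k<2^n\}$ is $(n,c2^{-n})$-separated, so $N(n,\epsilon_n)\ge 2^{n}$ along a sequence $\epsilon_n\to 0$. Consequently the number of itineraries your decider must enumerate is not $2^{o(n)}$, and the step ``the total work is the product of the subexponential itinerary count with $\poly(n)$'' fails. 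If instead you keep $\epsilon$ fixed, an admissible itinerary only locates $T^n(g(s))$ to within $\epsilon\gg\rho_n$, so a single itinerary class contains strings both inside and outside the language and you are back to distinguishing them at exponentially fine resolution.

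A second, independent gap is the one you flag yourself: even granting a subexponential count of admissible itineraries, you have not explained how to enumerate, for a given itinerary, the strings $s\in I_n^d$ realizing it without scanning all $2^n$ of them. The neighborhood-preservation property of \cref{defFeasibleQuery} controls adjacency of images, not itineraries of orbits, so ``grouping strings by the itinerary of their image'' requires an argument you do not yet have. The paper's proof confronts the same coupling between the extrinsic data $g^{(n)}$ and the intrinsic orbit complexity, but does so in the other direction: it uses adjacency plus the premise that \emph{no} fast decider exists to conclude that adjacent images must separate in the Bowen metric, which sidesteps the need to construct an enumeration. I would either switch to the paper's direction or isolate, as an explicit lemma, a statement of the form ``if $N(n,2^{-v(n)})=2^{o(n)}$ then the telic problem has a $2^{o(n)}$ decider'' --- and then recognize that the hypothesis of that lemma is strictly stronger than $h_{\mathrm{top}}(T)=0$.
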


As a consequence of \cref{thm:entropyHardness1}, breakthroughs in our ability to prove algorithmic lower-bounds give way to tools for studying dynamical systems.
Furthermore, if $(X, T)$ is a system admitting an intractable telic problem, and this problem is polynomial-time reducible to a telic problem coming from a different system $(Y, S)$, then $(Y, S)$ has positive topological entropy by \cref{thm:entropyHardness1}. The use of this corollary is that it is often easy to construct polynomial-time reductions between decision problems, when they exist.
In \cite{everett3} we apply these techniques in a number of examples.

We say a subset $A \subseteq X$ of state space \emph{contains} a telic problem if, morally speaking, given a family of target sets $\{B^{(n)}\}_{n=1}^\infty$ and functions $\{g^{(n)}\}_{n=1}^\infty$ (where $g^{(n)}:\{0,1\}^n\rightarrow X$) that compose a telic problem, it holds that $B^{(n)} \subseteq A$ and $\image(g^{(n)}) \subset A$ for every $n\geq 1$ (see \cref{defContainment}). We say a set is \emph{pure} if all telic problems contained in the set are decided by Turing machines halting in polynomial time. A system $(X, T)$ is \emph{transitive} if, for any two nonempty open sets $U, V \subseteq X$, there is an $N$ such that $T^N(U) \cap V \neq \emptyset$.

If a dynamical system is transitive and admits hard telic problems, one would expect the hard telic problems to be common: that is, they cannot be contained in a null set. Indeed, we obtain the following dichotomy.

\begin{theorem}[\cref{thmTopDichotomy}]\label{thmTopDichotomy1}
Let $X \subset \RR^d$ be compact and connected, and let $(X, T)$ be an efficiently discretizable and transitive topological dynamical system with inverse.
Then either $X$ is pure, or every pure set $A \subset X$ is contained in the complement of an open and dense subset of $X$.
\end{theorem}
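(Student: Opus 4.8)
The plan is to prove the contrapositive, after first reformulating the conclusion. Observe that a set $A \subseteq X$ is contained in the complement of some open dense subset of $X$ if and only if $\interior(\overline A)=\emptyset$, i.e.\ $A$ is nowhere dense: indeed $A\subseteq X\setminus U$ for $U$ open and dense forces $U\subseteq X\setminus\overline A$, and such a dense $U$ exists exactly when $X\setminus\overline A$ is itself dense. Thus the theorem asserts the dichotomy that either $X$ is pure, or every pure set is nowhere dense. I would therefore assume that $X$ is not pure and that some pure set $A$ is somewhere dense, and derive a contradiction by showing $X$ is pure. Passing to $\interior(\overline A)$ and using that telic data are finite and rational, so that \cref{defContainment} still lets me realize the relevant problems with query image and target lying in $A$, I reduce to the case that $A$ contains a nonempty open ball $O$.

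The heart of the argument is a polynomial-time reduction taking an arbitrary telic problem $P^{\ast}=\{(B^{(n)},g^{(n)})\}$ coming from $(X,T)$ to a telic problem $P'$ contained in $A$. Two relocations are needed: the query image $\image(g^{(n)})$ and the target ball $B^{(n)}$ must both be moved inside $O\subseteq A$ without changing any answer. For the query image I would use the conjugation identity that, for any integer $j\ge 0$, the instance with query $T^{-j}\circ g^{(n)}$ at level $n+j$ and unchanged target satisfies $T^{\,n+j}\bigl(T^{-j}g^{(n)}(s)\bigr)=T^{n}\bigl(g^{(n)}(s)\bigr)$, so it has the same answer as $P^{\ast}$ at level $n$; choosing $j$ by transitivity and invertibility relocates the finite query image into $O$. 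For the target I would combine transitivity with compactness: cover $B^{(n)}$ by finitely many pieces, each carried by a suitable power of $T$ into a fixed small ball $\tilde B\subseteq O$, pad to a common time horizon, and thereby replace ``reach $B^{(n)}$'' by ``reach $\tilde B$''. Throughout, efficient discretizability with inverse (\cref{defDynDisc}) is what makes $T^{-j}\circ g^{(n)}$ and the intermediate iterates computable in polynomial time as rational points of $A$, while uniform continuity on the compact space $X$ bounds the discretization error.

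With the reduction in hand the conclusion is immediate: $P'$ is contained in the pure set $A$, hence is decided in polynomial time, and the reduction then decides $P^{\ast}$ in polynomial time; as $P^{\ast}$ was arbitrary, $X$ is pure, contradicting the assumption. The main obstacle is the target relocation, because closed balls are rigid under the homeomorphism $T$: the image $T^{k}(B^{(n)})$ is essentially never a ball, so ``reach $B^{(n)}$'' cannot be matched to ``reach a ball in $A$'' by a single conjugation. I expect to resolve this by a sandwich argument, replacing $\tilde B$ by slightly shrunk and slightly enlarged concentric balls, using transitivity to funnel each covering piece of $B^{(n)}$ into the resulting gap at a controlled time, and invoking robustness of the reachability predicate under an arbitrarily small perturbation of the radius, so that the common-horizon approximation never flips a yes/no answer. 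Controlling these errors uniformly, via compactness and \cref{defDynDisc}, while keeping the number of pieces and the time horizon polynomial in $n$, is the delicate point on which the proof turns.
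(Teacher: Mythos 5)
Your logical reformulation (the statement is equivalent to: a somewhere-dense pure set forces $X$ to be pure) is sound, and passing to an open ball $O\subseteq\interior(\overline{A})$ is defensible given that containment in \cref{defContainment} is phrased through the discretizations $A_{m(i)}$. But the proof breaks at both relocation steps, and for the same structural reason: topological transitivity only guarantees that $T^{j}(U)\cap V\neq\emptyset$ for open $U,V$; it never guarantees that a set is carried \emph{into} a prescribed small ball. For the query image you need $T^{-j}\bigl(\image(g^{(n)})\bigr)\subseteq O$, and for the target you need $T^{k}(P)\subseteq\tilde{B}$ for each covering piece $P$ of $B^{(n)}$. Both fail already for a hyperbolic toral automorphism (transitive, invertible, efficiently discretizable, and Lebesgue-measure-preserving): iterates of $T^{\pm1}$ preserve volume and stretch sets into long thin strips, so a $2^{-n}$-separated query image spread over the torus can never be packed into a fixed small ball, and $T^{k}(B^{(n)})$ is never contained in a ball of smaller volume. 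Since an invertible map on a nontrivial space can never be topologically exact, no strengthening of transitivity available under the theorem's hypotheses rescues the containments you need. The ``sandwich'' fix you flag as the delicate point is also not available: whether $T^{n}(x)$ lies in a target ball is precisely the kind of predicate that is \emph{not} robust under small perturbations of the radius, so shrinking or enlarging $\tilde{B}$ can flip answers; and funneling different pieces of $B^{(n)}$ into $\tilde{B}$ at different times is incompatible with \cref{deftelic}, which applies the same number of iterates to every query point of a given instance. (The level shift $n\mapsto n+j(n)$ with $j$ depending on $n$ also needs a uniformity argument you have not supplied, but that is secondary.)

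The paper argues in the opposite direction, which is why it avoids all of this. Rather than pulling an arbitrary hard problem into the pure open set, it starts from a hard telic problem, uses \cref{lemHT->Open} to fatten its target data so that the problem is contained in an open set $U$, and then forms $D=\bigcup_{n\in\ZZ}T^{n}(U)$, which is open, impure (it contains $U$), and dense because density only requires each $T^{n}(U)$ to \emph{intersect} every nonempty open set --- exactly what transitivity supplies, with \cref{lemUnion} guaranteeing impurity is preserved under the iterates. That asymmetry between intersection and containment is the reason the paper's route closes and yours does not; if you keep your contrapositive framing, you will still end up needing the paper's construction of an open dense impure set rather than a relocation argument.
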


The following theorem essentially establishes the complexity of solving telic problems as a topological invariant for certain systems.

\begin{theorem}[\cref{thmConjugacyHardness}]\label{thmConjugacyHardness1}
Let $X \subset \RR$ be a closed interval, and let $(X, T)$ and $(X, S)$ be two efficiently discretizable dynamical systems.
Suppose $\varphi:X\rightarrow X$ is a conjugacy of the two systems that is efficiently discretizable with inverse.
Then $(X, T)$ admits a hard telic problem if and only if $(X, S)$ admits a hard telic problem.
\end{theorem}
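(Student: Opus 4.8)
The plan is to show that the conjugacy $\varphi$ transports any telic problem coming from $(X,T)$ to one coming from $(X,S)$ with exactly the same yes/no answers, and that this transport is carried out in polynomial time, so that intractability is preserved. Since $\varphi\circ T = S\circ\varphi$ gives $T\circ\varphi^{-1} = \varphi^{-1}\circ S$, the map $\varphi^{-1}$ is a conjugacy from $(X,S)$ to $(X,T)$ which is again efficiently discretizable with inverse; hence by symmetry it suffices to prove one implication, say that a hard telic problem for $(X,T)$ forces one for $(X,S)$. So I would fix a hard telic problem for $(X,T)$ given by feasible queries $\{(g^{(n)}, B^{(n)})\}_{n\ge 1}$, where $g^{(n)}:\{0,1\}^n\to X$ and $B^{(n)}\subset X$ is a closed ball, and write $L$ for its language. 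Using the conjugacy relation in the form $S^{n}\circ\varphi = \varphi\circ T^{n}$, I define a candidate problem for $(X,S)$ by $\tilde g^{(n)} := \varphi\circ g^{(n)}$ and $\tilde B^{(n)} := \varphi(B^{(n)})$. Because $\varphi$ is a bijection, the basic identity
\[
T^{n}(g^{(n)}(s)) \in B^{(n)} \iff \varphi(T^{n}(g^{(n)}(s))) \in \varphi(B^{(n)}) \iff S^{n}(\tilde g^{(n)}(s)) \in \tilde B^{(n)}
\]
holds for every $s$, so the two problems have identical answers on corresponding instances.

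The next step is to verify that $\{(\tilde g^{(n)}, \tilde B^{(n)})\}$ is a genuine feasible query family for $(X,S)$, and here one-dimensionality is essential: as a homeomorphism of the interval $X$, the conjugacy $\varphi$ is monotone, so it carries the subinterval $B^{(n)}=[c-\rho,\,c+\rho]$ to the closed interval with endpoints $\varphi(c-\rho)$ and $\varphi(c+\rho)$ (in some order), which is again a closed ball. In dimension greater than one the image of a ball under $\varphi$ need not be a ball, which is exactly why the statement is confined to $X\subset\RR$. To exhibit a polynomial-size description of $\tilde g^{(n)}$, I would approximate $\tilde g^{(n)}(s)=\varphi(g^{(n)}(s))$ to precision $1/2^{r}$ by first approximating $g^{(n)}(s)$ to precision $1/2^{r'}$ from the given description of $g^{(n)}$ and then feeding the resulting rational into the discretization machine for $\varphi$. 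The required $r'$ is controlled by a modulus of continuity of $\varphi$; since $\varphi$ is efficiently discretizable it is polynomial-time computable as a real function on the compact interval $X$, and such functions admit a \emph{polynomial} modulus of continuity, so $r'=\poly(n,r)$ suffices and the composite runs in polynomial time. The endpoints of $\tilde B^{(n)}=\varphi(B^{(n)})$ are approximated in the same manner.

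The delicate point, and the step I expect to be the main obstacle, is that $\varphi$ generally sends rational ball endpoints to irrational numbers, so $\tilde B^{(n)}$ can only be specified approximately, and a point $S^{n}(\tilde g^{(n)}(s))$ lying within the approximation error of $\partial\tilde B^{(n)}$ could flip the computed membership and break the identity above. I would resolve this using the separation margin that the definition of a feasible query (\cref{defFeasibleQuery}) imposes: the points $T^{n}(g^{(n)}(s))$ are kept bounded away from $\partial B^{(n)}$ by a gap that the feasibility requirements prevent from being too small (at worst inverse-polynomial in the relevant parameters), so choosing the approximation precision $r=\poly(n)$ finer than this gap makes the approximately specified ball agree with $\varphi(B^{(n)})$ on every relevant point $S^{n}(\tilde g^{(n)}(s))$, preserving the identity. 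If instead the formalism permits ball parameters that are merely approximable in polynomial time, this step is unnecessary and one may take $\tilde B^{(n)}=\varphi(B^{(n)})$ exactly.

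Granting feasibility, the assignment $(g^{(n)}, B^{(n)})\mapsto(\tilde g^{(n)}, \tilde B^{(n)})$ is computable in polynomial time and gives a polynomial-time reduction of the original problem to its transport; thus a polynomial-time decider for the transported problem would decide $L$ in polynomial time (composing, if needed, with the polynomial-time map on instance descriptions), contradicting the hardness of $L$. Hence $(X,S)$ admits a hard telic problem. Running the identical argument with $\varphi^{-1}$ in place of $\varphi$ yields the converse implication, completing the proof.
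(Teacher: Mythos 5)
Your overall strategy is the same as the paper's: use the conjugacy as a polynomial-time reduction carrying a telic problem for $(X,T)$ to one for $(X,S)$ with identical answers, exploit monotonicity of $\varphi$ on the interval so that the transported target set is again an interval, and invoke symmetry (via $\varphi^{-1}$) for the converse. The one place where you diverge is the step you yourself flag as delicate, and there your primary resolution does not work: you claim the definition of a feasible query keeps the points $T^{n}(g^{(n)}(s))$ separated from $\partial B^{(n)}$ by an inverse-polynomial gap, but \cref{defFeasibleQuery} imposes only the neighborhood preservation property and polynomial-time computability of the $g^{(i)}$ --- no separation from the target boundary is guaranteed anywhere in the formalism, so you cannot choose a precision ``finer than this gap.'' The paper avoids the issue entirely by never transporting the continuous ball: the new target set is taken to be $\varphi_{i}\bigl(B^{(i)}_{v(i)}\bigr)$, the image of the \emph{already discretized} target under the discretized conjugacy, and the new query maps are $\hat{\varphi}_{r_i}\circ h^{(i)}$, so that the key identity $\varphi_r\circ T_r = S_r\circ\varphi_r$ holds exactly at the level of dyadic points and membership is an exact finite test with no approximation error at the boundary. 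Your closing hedge (``if the formalism permits ball parameters that are merely approximable\dots'') is essentially the correct move --- the target set in \cref{deftelic} is specified by a membership algorithm $\ccB$ on discretized points, not by exact rational endpoints of the transported ball --- so you should replace the separation-margin argument with the discretized-target construction and the proof goes through.
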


There is a classification theory of efficiently discretizable dynamical systems based on correspondences between their associated telic problems.
This can be realized by introducing the concept of \emph{reductions} between dynamical systems. If every telic problem coming from an efficiently discretizable dynamical system $(X, T)$ is polynomial-time reducible (by either Karp or Turing reductions) to some telic problem coming from an efficiently discretizable system $(Y, S)$, we say $(X, T)$ \emph{reduces} to $(Y, S)$ and write $(X, T) \leq_p (Y, S)$.
The relation $\leq_p$ is a preorder on the class of efficiently discretizable dynamical systems. This induces an equivalence relation $\sim_p$ so that $(X, T) \sim_p (Y, S)$ if and only if $(Y, S) \leq_p (X, T)$ and $(X, T) \leq_p (Y, S)$. If $(X, T) \sim_p (Y, S)$ we say the two systems are \emph{Turing equivalent}.

The computational tractability of telic problems is an invariant of Turing equivalence. A central question is whether there are also dynamical invariants.

\begin{problem}\label{probMain}
If $(X, T)$ and $(Y, S)$ are efficiently discretizable dynamical systems, and $(X, T) \sim_p (Y, S)$, does this imply $(X, T)$ and $(Y, S)$ share certain nontrivial dynamical properties?
\end{problem}

A positive answer would reveal an intimate connection between the two disciplines. The importance of this problem is demonstrated in \cref{propReductionEntropy}, asserting the existence of nontrivial invariants implies existence of hard telic problems (in particular, this would imply $\ccP \neq \ccNP$; the dynamicist seeking to apply methods from dynamics to complexity should begin with \cite{everett2}, from which there appear to be a plethora of tractable starting points).

This problem is also closely related to the existence of \emph{pseudorandom dynamical systems}. We say an efficiently discretizable dynamical system is \emph{pseudorandom} if it admits an intractable telic problem.
The pseudorandomness property essentially states the system has a ``stable" form of stochasticity. That is, upon discretizing the system, although the true stochasticity available by way of the continuum is lost, an artifact remains that 
drives the computational hardness.
The existence of pseudorandom dynamical systems connects with foundational open problems in computational complexity theory, particularly in the foundations of cryptography and the existence of one-way functions (see \cref{secProblems}).

%%%%%%%%%%%%%%%%%%%%%%%%%%%%%%%%%%%%%%%%%%%%%%%%%%%%%%%%%%%%%%%%%%%
\subsection{Historical context and motivation}\label{secBackground}

Our work adds to what has in effect been a decades-long program to bridge all areas of dynamical systems and the theory of computation.
However, perhaps the spirit of our papers is most related to deep work of Freedman, Nabutovsky, and Weinberger \cite{freedman1998topological,freedman2009complexity,weinberger2020computers,nabutovsky1996algorithmic}
who, very roughly, demonstrate the extent to which certain topological properties are entwined with computational properties.

Many of the most successful connections between dynamics and the theory of computation have been made in an impressive literature linking computability theory and dynamics.
In contrast, there has been less development between complexity theory and dynamical systems; although there has been more activity in this area recently (see below).

Indeed, historically emphasis has been placed on applying tools from computability theory and algorithmic randomness to dynamical systems, specifically computable analysis \cite{weihrauch2012computable} and Kolmogorov complexity \cite{li2008introduction}.
These lines of work have yielded a number of strong results, illuminating deep dynamical and physical structures \cite{crisanti1994applying,hiura2019microscopic,da1991undecidability,da1994undecidable,zurek1989algorithmic,braverman2015space, kreinovich2006towards,bennett1982thermodynamics}.
Salient in the literature is also work concerning the computability of certain objects arising in dynamical systems, such as strange attractors and specifically Julia sets \cite{gracca2018computing,braverman2009constructing,braverman2006non,braverman2009computability,rojas2019computational,bournez2017polynomial,gracca2024robust}.
This relates in part to the development of a separate line of inquiry treating dynamical systems as computers, appealing to the notion that movement and computation are two sides of the same coin.

The genesis of the later field is attributed to Moore \cite{moore1990unpredictability,moore1991generalized}, who communicated the idea of dynamical systems exhibiting a degree of unpredictability so strong basic state-prediction questions are undecidable.
This notion has since led to deep exploration of dynamical systems capable of computation \cite{cotler2024computational,koiran1999closed,PhysRevLett.83.1463,bournez2013computation} and related study of cellular automata and Turing machines treated as dynamical systems \cite{bruera2024topological,kuurka2001topological,kuurka1997languages,kuurka1997topological}.
Outstanding among such work is a sequence of papers inquiring into the ability of flows to exhibit universal computation, in part motivated by a creative idea of Tao for using universal computation to show existence of blow-up solutions to the Navier-Stokes equations \cite{cardona2021constructing,cardona2022turing,cardona2024hydrodynamic,tao2016finite,tao2017universality,cardona2024towards}.

In contrast, less emphasis has been placed on developing connections between computational complexity theory and dynamical systems, and results to this end are more wide-ranging.
Broadly, the majority of the literature connecting the two areas in some way may be roughly classified as follows.
\begin{enumerate}
\item Work studying the computational complexity of solving certain problems coming from a restricted class of dynamical system, such as state prediction problems, and problems related to computing the limiting behavior of a typical orbit \cite{applebaum2010cryptography,barrett2007predecessor,barrett2006complexity,papadimitriou2016computational,buss1990predictability,dudko2016poly,binder2006computational,dufort1997dynamics,rosenkrantz2024synchronous,ogihara2017computational,rojas2025algorithmic,de2018characterization,rojas2019computational,boyer2015mu}.
\item Work introducing computational complexity theoretic measures of chaos or the ``complexity" of a dynamical system \cite{crutchfield1989inferring,lloyd1988complexity,grassberger2005complexity,cecconi2003complexity,eckmann1985ergodic,stoop2004complexity}.
\item Work studying the way in which a dynamical system, acting as a computer, may solve a decision problem \cite{PhysRevLett.83.1463,cotler2024computational,bruera2024topological,ercsey2011optimization,sudoku2012chaos,bournez2017polynomial}.
\end{enumerate}

Each of the aforementioned papers have taken notable steps in developing the relationship between dynamics and complexity theory, and much of the above work \cite{bournez2017polynomial,PhysRevLett.83.1463,cotler2024computational,bruera2024topological,ercsey2011optimization,sudoku2012chaos,rojas2025algorithmic} has in fact beautifully elucidated fundamental structural relationships between the disciplines.
We also mention that there has been great success in carrying techniques from the sister literature of statistical mechanics into complexity theory  \cite{percus2006computational, gogioso2014aspects, welsh1990computational,galanis2014phase}, as well as a growing and exciting work concerned with using stochastic thermodynamics to better understand energetic aspects of computation \cite{bennett1982thermodynamics, wolpert2019stochastic, kardecs2022inclusive, manzano2024thermodynamics, ouldridge2023thermodynamics,wolpert2024stochastic,PhysRevResearch.2.033312}, and work seeking to develop more general relationships between complexity theory and physics \cite{denef2007computational,denef2018computational}.

Nevertheless, to the best of our knowledge, there is no work rigorously establishing the apparent relationship between degrees of computational feasibility and dynamical complexity, even though such a relationship has long been discussed and pursued (see e.g.\ discussion and questions posed in \cite{bournez2013computability}, and \cite{yampolsky2021towards} for a nice survey of concepts in this area). This apparent gap in the literature in part motivates the program pursued by the present collection of papers.

Our approach spans points (1) and (2) above.
Similar to literature studying the computational complexity of deciding certain problems coming from a dynamical system such as a ``state prediction" or ``reachability problems," we derive a natural class of decision problems coming from dynamical systems.
However, in contrast to previous works studying the complexity of deciding reachability problems, our approach does not restrict attention to a single dynamical system or small sub-class of dynamical systems. Nor are we principally concerned with proving the $\ccNP$ or $\textsf{PSPACE}$-hardness of the decision problems coming from such dynamical systems.
Rather, we emphasize the development of a bidirectional relationship between the fields by showing
\begin{enumerate}
\item how complexity theoretic machinery---namely reductions and an ability to prove lower bounds---provide new techniques for studying and classifying dynamical systems, and
\item how dynamical systems exhibiting certain behaviors impose structure on the algorithms solving decision problems derivable from the dynamics, thereby providing footholds for the application of existing classifications of dynamical systems to be leveraged in computational complexity theory.
\end{enumerate}

There has been literature pursuing programs similar to the spirit of ours. In particular, a nice recent paper of Rojas and Sablik \cite{rojas2025algorithmic} shows deep connections between aspects of computable analysis and dynamical systems, giving a general framework for classifying dynamical systems on the basis of the computational tractability of computing approximations to their attractors. Although the essential idea we employ is similar in spirit to that of Rojas and Sablik, our techniques and results are quite distinct.

In addition, there has been work pursuing a somewhat related aim in the restricted setting of cellular automata and sequential dynamical systems, primarily seeking to classify such systems on a basis of the complexity or decidability of solving certain problems \cite{barrett2006complexity,applebaum2010cryptography,buss1990predictability,SutnerComplexityOfCA,sutner1995computational,sutner2012computational}.
However, this is largely attributed to the discrete nature of cellular automata and sequential dynamical systems, which are amenable to study in the classical complexity setting.
What is lost in this restriction is the rich theory concerning dynamical systems over continuous domains.
In fact, as our work demonstrates, there is nothing to gain but much to lose by restricting to the study of dynamical systems more obviously amenable to discrete analysis, and indeed our results are of a very different nature.

%%%%%%%%%%%%%%%%%%%%%%%%%%%%%%%%%%%%%%%%%%%%%%%%%%%%
\subsection{Structure of the work}
This is the first of three papers seeking to develop a theory bridging components of computational complexity with dynamical systems. The papers are divided on the basis of the model of computation they work with and the field they emphasize. The present paper deals with the Turing model and emphasizes applications in dynamical systems. The second part \cite{everett2} deals with algebraic models of computation and deals with complexity theoretic issues. The third paper \cite{everett3} works with underlying ideas similar to the present paper and focuses on applications in dynamical systems as well. However, the model of computation is entirely distinct, and breaks free from the difficulties attributed to working in a Boolean framework, thereby providing results of a different nature.

This paper is organized as follows. After a preliminaries section, in \cref{secDiscretization} we develop the crucial notion of efficiently discretizable dynamical systems.
\cref{secTelic} introduces telic problems in full generality for the Turing model of computation, and gives basic structural results.
\cref{secClassification} shows how ability to prove lower bounds can be used to show a dynamical system possesses certain properties, proving \cref{thm:entropyHardness1} and \cref{thmTopDichotomy1}.
Finally, \cref{secProblems} offers concluding remarks and open problems.

%%%%%%%%%%%%%%%%%%%%%%%%%%%%%%%%%%%%%%%%%%%%%%%%%%%%
\subsection*{Acknowledgements}
I would like to thank David Cash for the conversations and support throughout the rather long development of this project.
I would also like to thank Jordan Cotler for the feedback on the ideas of this paper, and Sam Freedman for input on the exposition of the paper.
Finally, I would like to thank Nicolai Haydn for providing feedback during the early stages of this project that helped shape its direction.

The author is partially supported by the National Science Foundation Graduate Research Fellowship Program under Grant No. 2140001.

%%%%%%%%%%%%%%%%%%%%%%%%%%%%%%%%%%%%%%%%%%%%%%%%%%%%%%%%%%%%%%%%%%%%%%%
%%%%%%%%%%%%%%%%%%%%%%%%%%%%%%%%%%%%%%%%%%%%%%%%%%%%%%%%%%%%%%%%%%%%%%%
\section{Preliminaries}\label{secPreliminaries}
%%%%%%%%%%%%%%%%%%%%%%%%%%%%%%%%%%%%%%%%%%%%%%%%%%%%%%%%%%%%%%%%%%%%%%%
%%%%%%%%%%%%%%%%%%%%%%%%%%%%%%%%%%%%%%%%%%%%%%%%%%%%%%%%%%%%%%%%%%%%%%%

We provide a minimal review of the concepts necessary for the development of this paper, and set notation. Because this paper is intended for a cross-disciplinary audience, we give many basic notions that may be skipped by the expert. For a more complete review of the basics of computational complexity theory see \cite{arora2009computational} or \cite{papadimitriou}. For proper treatments of dynamical systems theory, we suggest \cite{robinson1998dynamical},\cite{brin2002introduction}, and \cite{walters2000introduction}.

%%%%%%%%%%%%%%%%%%%%%%%%%%%%%%%%%%%%%%%%%%%%%%%%%%%%%%%%%
\subsection{Dynamical systems background}
%%%%%%%%%%%%%%%%%%%%%%%%%%%%%%%%%%%%%%%%%%%%%%%%%%%%%%%%%

For the purpose of this paper a \emph{dynamical system}, or simply a \emph{system}, is a pair $(X, T)$ where throughout $X\subseteq \RR^m$ is infinite and carries the subspace topology induced by the Euclidean metric $\rho$, and $T:X \rightarrow X$ is a transformation. Define the iterates $T^n:X \rightarrow X$ for every nonnegative integer $n$ to be the $n$-fold composition of $T$; if $T$ is invertible $n$ may be negative as well.
When we say $(X, T)$ is a \emph{topological dynamical system} (suppressing notation of the topology), we mean $X$ is a compact and connected subset of $\RR^d$, and $T$ is continuous. If $T$ is also invertible it is taken to be a homeomorphism.
When we say $(X, T)$ is just a \emph{dynamical system} we mean $X$ need not be compact or connected and $T$ need not be continuous.

We now give a number of basic definitions in dynamics needed for this paper.
A subset $A \subseteq X$ is said to be \emph{invariant} if $T(A) = A$, and \emph{forward invariant} if $T(A) \subseteq A$. A point $x \in X$ is said to be \emph{periodic} if there exists an integer $n \geq 1$ such that $T^n(x) = x$.
Two continuous maps $T:X \rightarrow X$ and $S:Y \rightarrow Y$ are said to be \emph{topologically conjugate}, or just \emph{conjugate}, if there exists a homeomorphism $h:X \rightarrow Y$ such that $h\circ T = S \circ h$. We say $h$ is a \emph{topological semi-conjugacy} from $T$ to $S$ provided $h$ is continuous, onto, and $h\circ T = S \circ h$; $S$ is then said to be a \emph{factor} of $T$.

A map $T:X \rightarrow X$ is \emph{topologically transitive} on an invariant set $Y$ provided the forward orbit of some point $p$ is dense in $Y$. In particular, by the Birkhoff Transitivity Theorem, $T$ is transitive on $Y$ if and only if given any two open sets $U$ and $V$ in $Y$, there is a positive integer $n$ such that $T^n(U) \cap V \neq \emptyset$.

Finally, we recall the definition of the topological entropy of a system. We give the definition of Bowen and Dinaburg \cite{bowen1971entropy,dinaburg1970correlation}.
Let $X$ be a compact metric space and $T:X\rightarrow X$ continuous.
For each $n \in \NN$ define a metric $\rho_n$ by
\[
\rho_n(x, y) = \max\{\rho(T^i(x),T^i(y)): 0\leq i < n\} \text{ for } x, y \in X.
\]
For $\varepsilon > 0$ and $n \geq 1$, two points $x, y \in X$ are said to be $\varepsilon$-close with respect to metric $\rho_n$ if their first $n$ iterates are $\varepsilon$-close.
We shall say $E \subset X$ is $(n, \varepsilon)$-separated if each pair of distinct points of $E$ is at least $\varepsilon$ apart in the metric $\rho_n$. Let $N(n,\varepsilon)$ denote the maximum cardinality of an $(n,\varepsilon)$-separated set.
The topological entropy is then defined as
\[
h(T) = \lim_{\varepsilon\rightarrow 0}\left(\limsup_{n\rightarrow \infty}\frac{1}{n} \log N(n,\varepsilon)\right) \geq 0.
\]
The topological entropy can be interpreted as a measure of the amount of randomness in a system. We refer the computer scientist demanding more intuition to e.g.\ \cite{walters2000introduction}.

%%%%%%%%%%%%%%%%%%%%%%%%%%%%%%%%%%%%%%%%%%%%%%%%%%%%%%%
\subsection{Computational complexity background and notation}
%%%%%%%%%%%%%%%%%%%%%%%%%%%%%%%%%%%%%%%%%%%%%%%%%%%%%%%%%

If $x$ is a rational number, let $\langle x \rangle$ denote the standard binary encoding of $x$. More generally, $\langle \cdot \rangle$ denotes the binary encoding of an object. Let $1^n$ label the unary encoding of $n$ as a string of $n$ ones. Let $\{0,1\}^*$ denote the collection of all binary strings, and $\{1\}^*$ denote the collection of all unary strings. We let $|s|$ denote the length of a binary string $s \in \{0,1\}^*$. 

Informally, a \emph{Turing machine} is a system with a collection of rules dictating how one configuration of the system is to be carried over to another configuration.

\begin{definition}
A \emph{Turing machine $\ccM$} is described by a tuple $(\Gamma, Q, \delta)$, where $\Gamma$ is a finite set of symbols $\ccM$'s tape can contain, with designated ``blank" and ``start" symbols. $Q$ is a finite set of states $\ccM$ can be in, with designated starting state $q_{start}$, and halting states $q_{accept}$ and $q_{reject}$. The transition function \[\delta:Q \times \Gamma \rightarrow Q \times \Gamma \times \{L, S, R\}\] describes the rules $\ccM$ uses in performing each step of the computation. It is useful to consider the transition function as a \emph{finite} table that establishes a correspondence between what the input symbol, work tape symbol, and current state is, with the next action.
\end{definition}

Turing machines so defined are also called \emph{deterministic Turing machines.}
The collection of strings that a Turing machine $\ccM$ accepts is the \emph{language of $\ccM$}, denoted $L_\ccM$. A Turing machine that halts on every input, thereby rejecting or accepting every input string, is called a \emph{decider}. A language (a set of strings) is said to be \emph{decidable} or \emph{computable} if some Turing machine decides it --- the input strings the Turing machine accepts are precisely those in the language. A \emph{decision problem} is a computational task that asks for a simple yes-or-no answer about whether a given input satisfies a specified property, and hence can be identified with a language $L \subseteq \{0,1\}^*$ that is precisely the set of strings a decider accepts.

A Turing machine $\ccM_f$ \emph{computes} a partial function $f$ if for all inputs $x$ on which $f$ is defined, $\ccM_f(\langle x\rangle)$ halts with $\langle f(x)\rangle$ on its output tape. A sequence of functions $\{f_n\}_{n=1}^\infty$ is \emph{uniformly computable} if there is a Turing machine $\ccM$ that on any input $\langle n, x\rangle$ halts with $f_n(x)$ on its output tape.

Now we formalize the notion of \emph{runtime} of a Turing machine. We call a function $q:\NN\rightarrow \NN$ \emph{time constructible} if $q(n) \geq n$ and there is a Turing machine that computes the function $x\mapsto \langle q(|x|)\rangle$ in time $q(n)$.

\begin{definition}
Let $\ccM$ be a Turing machine, $f:\{0, 1\}^* \rightarrow \{0, 1\}^*$ and take $q:\NN \rightarrow \NN$ to be  time-constructible. We say that $\ccM$ \emph{computes $f$ in $q$-time} if its computation on every input $x$ requires at most $q(|x|)$ steps.
\end{definition}

The set $\textsf{DTIME}(q(n))$ for time constructible $q:\mathbb{N}\rightarrow \mathbb{N}$, is the class of problems solvable by a (deterministic) Turing machine in time $O(q(n))$.
Efficient computation is equated with polynomial running time. The complexity class $\ccP$ is defined to be $\textsf{P} = \cup_{c \geq 1} \textsf{DTIME}(n^c)$.

The complexity class $\ccNP$ is the class of problems with \textit{efficiently verifiable solutions}. 
That is, a language (decision problem) $L \subseteq \{0, 1\}^*$ is in $\ccNP$ if there exists a polynomial $p:\NN\rightarrow \NN$ and a polynomial time Turing machine $\ccN$ called the \emph{verifier} for $L$, such that for every $x \in \{0, 1\}^*$, 
\[
x \in L \iff \exists u \in \{0, 1\}^{p(|x|)} \text{ s.t. } \ccN(x, u) = 1 \text{ (accepts).}
\]
If $x \in L$ and $u \in \{0, 1\}^{p(|x|)}$ satisfy $\ccN(x, u) = 1$, then we call $u$ a \emph{certificate} for $x$ with respect to the language $L$ and machine $\ccN$.

Let $A$ and $B$ be decision problems (languages). A \emph{polynomial-time many-one reduction} or \emph{Karp reduction} from $A$ to $B$ is a polynomial-time algorithm transforming instances $x$ of $A$ into instances $f(x)$ of $B$, such that $x \in A \iff f(x) \in B$. If such a function exists we write $A \leq_p B$. A problem $A$ in $\ccNP$ is $\ccNP$-complete if every problem in $\ccNP$ is Karp reducible to $A$.

%%%%%%%%%%%%%%%%%%%%%%%%%%%%%%%%%%%%%%%%%%%%%%%%%%%%%%%%%%%%%%%%%%%%%%%
%%%%%%%%%%%%%%%%%%%%%%%%%%%%%%%%%%%%%%%%%%%%%%%%%%%%%%%%%%%%%%%%%%%%%%%
\section{Discretizing dynamical systems}\label{secDiscretization}
%%%%%%%%%%%%%%%%%%%%%%%%%%%%%%%%%%%%%%%%%%%%%%%%%%%%%%%%%%%%%%%%%%%%%%%
%%%%%%%%%%%%%%%%%%%%%%%%%%%%%%%%%%%%%%%%%%%%%%%%%%%%%%%%%%%%%%%%%%%%%%%

This section deals with the basic issues involved in discretizing dynamical systems with continuous state-spaces. We comment that although it may appear most natural to use an algebraic model of computation or Blum-Shub-Smale machines that handle real numbers directly, even in these settings we quickly run into issues of function approximation, so little is lost by remaining in the Turing model.

%%%%%%%%%%%%%%%%%%%%%%%%%%%%%%%%%%%%%%%%%%%%%%%%%%%%%%%%%%%%%%%%%%%%%%%%%%%%%%%%
\subsection{Notation}
%%%%%%%%%%%%%%%%%%%%%%%%%%%%%%%%%%%%%%%%%%%%%%%%%%%%%%%%%%%%%%%%%%%%%%%%%%%%%%%%
Let $\DD$ denote the set of \emph{dyadic rational numbers}; that is, the numbers with finite binary expansion so that $y \in \DD$ has form $y = p/2^q$ for some integers $p$ and $q \geq 0$. The binary expansion of a dyadic rational is its natural finite representation. More precisely, let $s_0, s_1,\dots,s_u, t_1,\dots,t_v$ be bits in $\{0,1\}$. Then the string $s_us_{u-1}\cdots s_0.t_1t_2\cdots t_v$ is the (nonnegative) dyadic rational number
\[
y = \sum_{i=0}^u s_i \cdot 2^i + \sum_{j=1}^v t_j\cdot 2^{-j}.
\]
Let $\langle y \rangle$ denote the binary representation of $y \in \DD$.
In this paper we pass freely between the representation of a dyadic rational $y \in \DD$ by a fraction $p/2^q$ and its associated binary representation. Tacitly, the Turing machine involved deals with the associated binary representation.

Every dyadic rational has an infinite number of representations; when speaking of the dyadic rational $p/2^q$ as a binary string, we mean its shortest representation.
For each representation $s$ of a dyadic rational $d$, let $r = \precision(s)$ label the \emph{precision} of the representation, namely the number of bits to the right of the binary point of $s$. Let $\DD_r = \{m \cdot 2^{-r} : m \in \ZZ\}$ denote the collection of dyadic rationals with precision $r$.

The choice of using the set $\DD$ rather than $\QQ$ comes from the fact that $\DD$ is \emph{uniformly} dense in $\RR$ in the sense that for any integer $r>0$, dyadic rationals of precision $r$ are uniformly distributed on the real line. This is helpful in developing the following definitions.

%%%%%%%%%%%%%%%%%%%%%%%%%%%%%%%%%%%%%%%%%%%%%%%%%%%%%%%%%%%%%%%%%%%%%%%%%%%%%%%%
\subsection{Discretizing state space}

Many canonical dynamical systems have positive Lyapunov exponents.
This diminishes our ability to accurately iterate dynamical systems in a classical (finite) model of computation with fixed precision, where round-off errors lead to exponential decay in the information of a points location in state space. This makes accurate long-term simulation of such systems difficult or impossible (this is a well studied area, with some positive results \cite{gora1988computers}).

In this paper, however, we are only concerned with asymptotics. As a consequence, we do not fix the precision but instead allow it to be sent to infinity with the number of iterations of the dynamical system. This enables accurate simulation of systems with positive Lyapunov exponents.
Our approach toward discretizing dynamical systems is novel (to the best of our knowledge), but should feel relatively standard. Indeed, it is close in spirit to the ``bit-model" of computation \cite{braverman2006computing}, and also reflects aspects of methods by Blank \cite{blank1997discreteness} and Ko \cite{ko2012complexity}, as well as Galatolo in \cite{galatolo2003complexity}, who studies systems that are of a similar spirit to efficiently discretizable systems, albeit in a setting forgoing any notion of efficient computation.

Let $(X, T)$ be a dynamical system. Recall $\DD_r^d \subset \RR^d$ denotes the precision $r$ dyadic rationals. Define the \emph{$r$-discretization} of state space $X\subseteq \RR^d$ to mean the selection of a subset $X_r \subseteq \DD_r^d$ such that for every $x \in X$ there is at least one $x_r \in X_r$ so that $\|x - x_r\|_2 \leq 1/2^r$.
By the \emph{operator of an $r$-discretization} we shall mean the mapping $\ccD_r:X\rightarrow X_r$ that takes each point $x \in X$ to the closest point in $X_r$ with respect to the 2-norm (if there are several such points, select the lowest value).
If $A \subseteq X \subseteq \RR^d$, then $A_r = \ccD_r(A)$ denotes the $r$-discretization of $A$.
If it can be decided in time polynomial in $r$ (encoded as a unary input) whether a point $x_r \in \DD_r^d$ is contained in $X_r$, we shall say the $r$-discretization of $X$ is \emph{efficiently discretizable}.

We say a space $X$ \emph{contains its $r$-discretizations} if $X_r \subseteq X$ for every $r \in \NN$. And, whenever we subscript a space $X$ with an integer, we mean the discretization at a precision by that integer.

\begin{remark}
In the setting of this paper, the operator $\ccD_r$ is simply the rounding operator, which is computable in polynomial time.
\end{remark}

Before moving to define the discretization of a dynamical system, we consider the discretization of a function. First, let
\[
X_* = \bigcup_{r=1}^\infty X_r
\]
denote the union of all discretizations of a state space $X$.

The following definitions are technical because it is necessary to work with Turing machines to maintain precision. Although the ideas are simple: the definitions define a function to be \emph{efficiently discretizable} if there exists a polynomial-time Turing machine capable of computing arbitrarily good approximations to it.

\begin{definition}\label{defFuncDisc}
Let $X, Y \subseteq \RR^d$ be efficiently discretizable, and let $\ccD_r^X, \ccD_r^Y$ be the associated discretization operators for the two spaces. We say a function $f:X\rightarrow Y$ is \emph{efficiently discretizable} if there exists an algorithm (called its \emph{discretization}) $\ccM_f:X_*\times \{1\}^* \rightarrow Y_*$ computable in polynomial time, such that for $x \in X_*$, $\ccM_f(\langle x\rangle, 1^l) = \ccD^Y_l(f(y))$, where $y=x$ if $x \in X$ and $y \in (\ccD_r^X)^{-1}(x)$ otherwise.
\end{definition}

\begin{definition}\label{defFuncDisc2}
If $f:X\rightarrow X$ is a bijection, then we shall say the function is \emph{efficiently discretizable with inverse} if there are uniformly and polynomial-time computable families of functions $\{\ccM_f^{(r)}\}_{r=1}^\infty$, $\ccM_f^{(r)}:X_r\rightarrow X_r$, and $\{\ccM_{f^{-1}}^{(r)}\}_{r=1}^\infty$, $\ccM_{f^{-1}}^{(r)}:X_r\rightarrow X_r$, such that the following hold.\footnote{In other words, there is a polynomial-time Turing machine $\ccM_f(\langle r, x\rangle)$ that rejects its input $\langle r,x\rangle$ whenever $r \neq |\langle x\rangle|$, and we use the notation $\ccM_f^{(r)}(x)\coloneqq\ccM_f(\langle r, x\rangle)$.}
\begin{enumerate}
\item For all $r \geq 1$ and $x \in X_r$, $\ccM_f^{(r)}(\langle x \rangle) = \ccD^X_r(f(y))$, where $y=x$ if $x \in X$ and $y \in (\ccD_r^X)^{-1}(x)$ otherwise; similarly for each $\ccM_{f^{-1}}^{(r)}$, mutatis mutandis.
\item For all $r \geq 1$, $\ccM^{(r)}_f \circ \ccM^{(r)}_{f^{-1}} = \ccM^{(r)}_{f^{-1}} \circ \ccM^{(r)}_{f} = \Id$.
\end{enumerate}
Fix the notation
\[
f_r \coloneqq \ccM^{(r)}_f \quad \text{and}\quad  f_r^{-1} \coloneqq \ccM^{(r)}_{f^{-1}}.
\]
\end{definition}

We now provide our approach for discretizing dynamical systems.

The key idea to keep in mind throughout the development of telic problems and the following definitions is that whenever we wish to iterate the dynamics for a longer (but finite) period of time, this will be matched by an increase in the accuracy of the approximation of the state-space by considering a larger $r$ in the $r$-discretization of $X$, so that the discretized version of the dynamics accurately reflects the true dynamics with respect to the given measurement precision, over the given time period. This is the same as adding more precision to the computer whenever we wish to simulate the dynamics for a longer period of time.

The following linchpin definition formalizes the concept of ``accurately" and ``efficiently" simulating a dynamical system for finite time, measured through some coarse-graining of state space (namely, an $r$-discretization).

\begin{definition}\label{defDynDisc}
Let $X \subset \RR^d$ be efficiently discretizable. We say a dynamical system $(X, T)$ is \emph{efficiently discretizable} if there exists a polynomial-time Turing machine
\[\ccM_T:X_*\times \{1\}^* \times \{1\}^* \rightarrow X_*\]
such that on input $(x, 1^k,1^r)$,
\[
\ccM_T(\langle x\rangle, 1^k, 1^r)=\ccD_r(T^k(y)),
\]
where $y=x$ if $X$ contains its $r$-discretizations, and $y \in \ccD_r^{-1}(x)$ otherwise.
We say $\ccM_T$ is the \emph{discretization of $T$}. Set the notation
\[
T_r^k(x) \coloneqq \ccM_T(\langle x\rangle, 1^k, 1^r) \text{ for $x \in X_*$}.
\]
If $(X, T)$ is a dynamical system with $T$ a bijection that is efficiently discretizable with inverse, then we say $(X, T)$ is \emph{efficiently discretizable with inverse}. We shall say \emph{$(X, T)$ is a topological dynamical system with inverse} to mean $X$ is compact and $T$ is an efficiently discretizable homeomorphism with inverse.
\end{definition}

%%%%%%%%%%%%%%%%%%%%%%%%%%%%%%%%%%%%%%%%%%%%%%%%%%%%%%%%%%%%%%%%%%%%%%%%%%%%%%%%
\subsection{Discretizations of dynamical systems in practice}\label{subsecMapDiscretization}

Intuitively, $T$ is efficiently discretizable if, upon fixing some working precision $r$ for a coarse-graining of the state space $X$, there is some finer precision on which we can simulate the dynamical system for $k$ steps, so that its itinerary in the partition of the coarse-graining is identical to the true length $k$ orbit segment of the system. If we define instead $T_r = \ccD_r \circ T$ we obtain the implementation of the system on a computer with finite precision where points are subject to round-off error at each iteration. In contrast, in our setting we round ``only on the last iteration."

It follows that if $T$ is the doubling map $E_2(x) = 2x \mod 1$ of the circle for instance, although distances of nearby points double every iteration, there is an appropriate precision $r=r(k) \geq 2k$ so that $\|T^{k}(x) - T_r^{k}(x)\|_2 \leq 1/2^{r+1}$ for all $x \in X_r=I_r$ ($X=[0,1)$ which contains its $r$-discretization). That is, the discretization $\ccM_T$ iterates $x$ at a precision of $r=r(k)\geq 2k$ for $k$ iterations, and even with $k$ worst-case round-off errors each iteration, we still have 
\[
\|T^{k}(x)-T_r^{k}(x)\|_2\leq 2^{-k+r} \leq 1/2^{r+1}.
\]
Since $k$ is polynomially related to $r$, the doubling map is then seen to be efficiently discretizable, since
\[
\ccM_T(\langle x\rangle, 1^k, 1^r)=\ccD_r(T^k(x))
\]
for all $x \in X_r$. From this argument it follws that a wide variety dynamical systems whose maps are ``well-behaved" Lipschitz maps are efficiently discretizable. Notice we say ``well-behaved" rather than \emph{all} Lipschitz maps, because they are in general not even computable (in an appropriate sense).

Other basic examples of efficiently discretizable systems include rigid circle rotations $R_a(x) = x+a \mod 1$ by an algebraic number $a \in \AA$,\footnote{There is a mature literature on the efficient and effective manipulation of algebraic numbers that deals with their representations as roots of polynomials with integer coefficients. See \cite{basu2006algorithms}.} the tent-map $T:[0,1]\rightarrow[0,1]$, defined by
\[
T(x) = \begin{cases}
2x &\text{ if } 0\leq x\leq 1/2,\\
2(1-x) &\text{ if } 1/2<x\leq 1,
\end{cases}
\]
as well as the H\'enon map, Baker's map, and Chirikov standard map with rational parameters. In fact ``most" dynamical systems one comes across will be efficiently discretizable since those that are not tend to be rather pathological (i.e. uncomputable examples). For completeness, we prove this assertion for both rigid circle rotations and the tent map in the following lemma.

\begin{lemma}\label{lemEfficientDiscretization} Let $R_a:S^1\rightarrow S^1$ be a rigid rotation of the circle by an algebraic number $a \in \mathbb{A}$, and let $T:I\rightarrow I$ be the tent-map.
\begin{itemize} 
\item The system $(S^1, R_a)$ is efficiently discretizable.
\item The system $(I, T)$ is efficiently discretizable.
\end{itemize}
\end{lemma}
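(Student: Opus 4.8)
The plan is to treat the two systems separately: I would dispatch the tent map by exact dyadic arithmetic, and the circle rotation by combining effective approximation of algebraic numbers with a Liouville-type separation bound to resolve roundings. I expect essentially all the difficulty to sit in the rotation case, so I describe the tent map first and quickly.

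For the tent map, the key observation is that $T$ sends dyadic rationals to dyadic rationals while never increasing precision. Indeed, if $x = m/2^s \in \DD_s \cap [0,1]$, then either $T(x) = 2x = m/2^{s-1}$ or $T(x) = 2 - 2x = (2^s - m)/2^{s-1}$, so in both branches $T(x) \in \DD_{s-1} \subseteq \DD_s$ and $T(x) \in [0,1]$. Since $I$ contains its $r$-discretizations, on input $(x, 1^k, 1^r)$ the relevant point is $y = x \in I_r \subseteq \DD_r$, and iterating the two affine branches exactly for $k$ steps produces $T^k(x) \in \DD_{\max(r-k,\,0)} \subseteq \DD_r \cap [0,1]$. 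The machine $\ccM_T$ therefore simply performs $k$ exact steps---each a comparison against $1/2$ followed by a doubling or a subtraction from $2$---and returns the result; because the precision is bounded by $r$ throughout, every step costs $\poly(r)$ and the total runtime is $O\big(k\cdot\poly(r)\big)$, polynomial in the input size. As the exact value $T^k(x)$ already lies in $\DD_r$, we have $\ccD_r(T^k(x)) = T^k(x)$, so the output is correct and no nontrivial rounding is ever required.

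For the circle rotation I would write $R_a^k(x) = x + ka \bmod 1$ and assume without loss of generality $a \in [0,1)$ (replacing $a$ by $a \bmod 1$ preserves algebraicity and degree). If $a = p/q$ is rational, then $R_a^k(x)$ is a rational number computable exactly in time $\poly(r,k)$, which we round to precision $r$ using the tie-breaking convention of $\ccD_r$; this case is immediate. The substance is the case where $a$ is irrational algebraic of degree $d \geq 2$. Computing $\ccD_r(x + ka \bmod 1)$ amounts to rounding $2^r(x+ka)$ to the nearest integer, and since $x \in \DD_r$ the only possible obstruction is that $2^{r+1}ka$ fall too close to an (odd) integer $N$. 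Here I would invoke the classical Liouville inequality: there is an effectively computable constant $C = C(a) > 0$, depending only on the fixed, hard-coded minimal polynomial of $a$, such that $|a - p/q| \geq C/q^{d}$ for every rational $p/q$. Applying this with $q = 2^{r+1}k$ gives
\[
\left| 2^{r+1} k a - N \right| \;=\; 2^{r+1}k\,\left| a - \tfrac{N}{2^{r+1}k}\right| \;\geq\; \frac{C}{(2^{r+1}k)^{\,d-1}}
\]
for every integer $N$. Consequently $x + ka \bmod 1$ is separated from every grid point and every midpoint of the precision-$r$ lattice by at least $2^{-r'}$ for some $r' = O\big(d(r + \log k)\big)$, which is polynomial in the input size because $d$ is a constant and $k$ is presented in unary.

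It then remains to produce an approximation fine enough to resolve this gap. Using the effective theory of algebraic numbers (bisection or Newton iteration from an isolating interval for the minimal polynomial of $a$; see \cite{basu2006algorithms}), the machine computes a dyadic $\tilde a$ with $|\tilde a - a| \leq 2^{-(r' + \lceil\log_2 k\rceil + 2)}$ in time $\poly(r') = \poly(r,k)$, forms $x + k\tilde a \bmod 1$, and rounds to precision $r$. The displayed separation guarantees this rounding agrees with $\ccD_r(x + ka \bmod 1)$, and, because $a$ is irrational, the true point never lands exactly on a grid point or a midpoint, so the output is unambiguous and correct. Every step runs in time polynomial in $r + k$, so $\ccM_T$ witnesses that $(S^1, R_a)$ is efficiently discretizable. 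The main obstacle in the whole argument is precisely this last case---establishing, via the Liouville bound, that only polynomially many bits of $a$ are needed to determine the correct rounding; once that separation is in hand, everything else reduces to exact or routine approximate dyadic arithmetic.
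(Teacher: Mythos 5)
Your proof is correct and follows essentially the same route as the paper: exact dyadic arithmetic for the tent map (observing that $T$ maps $I_r$ into itself, so no rounding is ever needed), and direct computation of $w/2^r + ka \bmod 1$ followed by rounding for the rotation. The one place you go beyond the paper's argument is the Liouville separation bound showing that polynomially many bits of $a$ suffice to resolve the rounding unambiguously; the paper simply asserts poly-time computability for algebraic $a$ and defers to the cited literature on effective algebraic-number arithmetic, so your added detail is a correct elaboration of the same approach rather than a different one.
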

\begin{proof}
Put $S^1 = \RR/\ZZ$, so treat $R_a$ as a map of the unit interval $I$ with endpoints identified. For every $r \in \NN$, points in the $r$-discretization $I_r$ of $I$ have form $w/2^r$, $w = 0,\dots,2^{r}-1$. Notice for any $x=\frac{w}{2^r} \in I_r$, $R_a^k(x) = w/2^r + ka \mod 1$. But $w/2^r + ka$ can be computed and reduced mod $1$ in time polynomial in $r +k$ when $a \in \AA$ is algebraic, and $\ccD_r$ is simply a rounding operation.

For the system $(I, T)$, again notice that for any $x \in I_r$, $x$ is of form $w/2^r$. Then, either $T(x) = 2w/2^r$ or $T(x) = 2^{r+1}/2^r - 2w/2^r = w'/2^r$ with $w' \in [2^r-1]$.
But notice if $x \in I_r$, then so too is $T(x) \in I_r$. Therefore $T^k(x) \in I_r$, which is computable in time polynomial in $r+k$, as is the computation of $\ccD_r(\cdot)$.
\end{proof}

%%%%%%%%%%%%%%%%%%%%%%%%%%%%%%%%%%%%%%%%%%%%%%%%%%%%%%%%%%%%%%%%%%%%%%%%%%%%%%%%
%%%%%%%%%%%%%%%%%%%%%%%%%%%%%%%%%%%%%%%%%%%%%%%%%%%%%%%%%%%%%%%%%%%%%%%%%%%%%%%%
\section{Finite telic problems}\label{secTelic}
%%%%%%%%%%%%%%%%%%%%%%%%%%%%%%%%%%%%%%%%%%%%%%%%%%%%%%%%%%%%%%%%%%%%%%%%%%%%%%%%
%%%%%%%%%%%%%%%%%%%%%%%%%%%%%%%%%%%%%%%%%%%%%%%%%%%%%%%%%%%%%%%%%%%%%%%%%%%%%%%%

%%%%%%%%%%%%%%%%%%%%%%%%%%%%%%%%%%%%%%%%%%%%%%%%%%%%%%%%%%%%%%%%%%%%%%%%%%%%%%%%
\subsection{Telic problems and first results}
%%%%%%%%%%%%%%%%%%%%%%%%%%%%%%%%%%%%%%%%%%%%%%%%%%%%%%%%%%%%%%%%%%%%%%%%%%%%%%%%

We begin with a preliminary definition. Intuitively, a map $g:I^d_i\rightarrow X_{r_i}\cup \{\perp\}$ satisfies the \emph{neighborhood preservation property} if $g(I^d_i)$ is a distortion of $I^d_i$ in $X$, keeping adjacent points in $I^d_i$ adjacent in $X$.

\begin{definition}\label{defFeasibleQuery}
Fix an efficiently discretizable space $X \subset \mathbb{R}^\ell$ and an integer $d$, $1 \leq d \leq \ell$. Let $I_r^d$ be the $r$-discretization of the $d$-dimensional cube $[0,1]^d \subset \RR^d$.
Let $\{g^{(i)}\}_{i=1}^\infty$ be a sequence of maps
\[
g^{(i)}:I^d_i\rightarrow X_{r_i} \cup \{\perp\},
\]
where $r_i\geq i$ is a value determined by $g^{(i)}$.
We shall say the maps satisfy the \emph{neighborhood preservation} property if for all $i \in \NN$, it holds that for every $x_0 \in I^d$ and $\varepsilon=\sqrt{d}/2^{i}$, there is a $\delta > 0$ and $y_0 \in X$ such that for all $x\in I_i^d $, $x \in \overline{B_\varepsilon(x_0)}$ if and only if $g^{(i)}(x) \in \overline{B_\delta(y_0)} \cup \{\perp\}$.\footnote{The intuition is that $g^{(i)}$ distorts the structure of $I_i^d$ in $X$, but not too dramatically so that neighboring points in $I_i^d$ are still neighboring in $\image(g^{(i)})$.}
If, in addition, there is a Turing machine $\ccG$ running in time $O(i^c)$, $c \in \NN$, for which $\ccG(\langle x\rangle) = \langle g^{(i)}(x)\rangle$ where $|\langle x\rangle|=di$, then we shall say the sequence $\{g^{(i)}\}_{i=1}^\infty$ is \emph{feasibly queryable}.
\end{definition}

The symbol $\perp$ in the definition should be interpreted as a ``blank" or ``garbage" output of $g^{(i)}$. The $g^{(i)}$ can be interpreted as maps from binary strings to $r$-discretizations of state-space, i.e.\ $g^{(i)}:(\{0,1\}^i)^d \rightarrow X_{r_i} \cup \{\perp\}$.
The neighborhood preservation property is quite strong, and is used in the following definition of telic problems to ensure any difficulty in deciding telic problems comes from the dynamics, not complicated functions $g^{(i)}$.

The following definition of telic problems is necessarily technical in order to make the computational complexity theoretic claims of this paper precise and correct. We give intuition after the definition.

%%%%%%%%%%%%%%%%%%%%%%%%%%%%%%%%%%%%
\begin{definition}[Telic problems]\label{deftelic}%%%%%%%%%%%%
Fix $X \subseteq \RR^m$, and let $(X, T)$ be an efficiently discretizable dynamical system with $\ccM_T$ the discretization of $T$.
\begin{itemize} 
\item Let $\{g^{(i)}\}_{i=1}^\infty$ be a sequence of feasibly queryable maps, with
\[
g^{(i)}: I_i^d \rightarrow X_{r_i} \cup \{\perp\}
\]
for $d \leq m$.
Let $H:X\cup\{\perp\}\rightarrow X\cup\{\perp\}$ be a map such that $H$ carries $\perp$ to itself, and $H|_X\coloneqq \hat{H}:X\rightarrow X$ is an efficiently discretizable bijection with inverse that commutes with $T$, so $H\circ T = T \circ H$ ($H$ is typically the identity).
Fix $\ell \in \ZZ^+$.
Let $\{h^{(i)}\}_{i=1}^\infty$ be a family of maps with $h^{(i)} = \hat{H}_{r_i}^{\ell}\circ g^{(i)}$, and let $\ccH$ be a Turing machine running in polynomial time, with $\ccH(\langle x\rangle) = \langle h^{(i)}(x)\rangle$ for $x \in I_i^d$.

\item Let $\ccV:\{1\}^*\rightarrow \{1\}^*$ denote a polynomial-time Turing machine that on input $1^n$ returns $1^{Cn}$ for some integer $C\geq 1$.
Let $v(n)=Cn$ be the integer representation of the unary value $\ccV(1^n)$.\footnote{$\ccV$ is a machine taken to run in polynomial-time and hence can return any polynomial value of $n$ in unary most generally. Working at this level of generality however grants us nothing of use in this paper so we do not work with this for now.}

\item Let $\{B^{(i)}\}_{i=1}^\infty$ be a sequence of sets treated as subsets of ambient space $\RR^m$ of $X$, for which $B^{(i)} = \hat{H}^{\ell}(R^{(i)})\cap X\neq \emptyset$ where $R^{(i)}\subset \RR^m$ is a closed rational rectangle with non-empty intersection with $X$, and there is an associated polynomial time algorithm $\ccB$ that, on input $x \in X_{v(i)}$, returns $1$ if $x \in\ccD_{v(i)}(B^{(i)})$ and returns $0$ otherwise. Fix the notation $B^{(i)}_{v(i)} = \ccD_{v(i)}(B^{(i)})$.
\end{itemize}

A \emph{telic problem} coming from an efficiently discretizable dynamical system $(X, T)$,  is the problem of deciding membership in the unary language
\begin{equation*}
\ccTELIC_{(X, T)}(\ccV,\ccH,\ccB) =\left\{1^n : \exists s \in I_n^d \text{ s.t. } \ccB(\ccM_T(\ccH(\langle s\rangle), 1^n, \ccV(1^n))) = 1\right\}.
\end{equation*}
Let $\ccTELIC_{(X, T)}$ denote the entire class of telic problems coming from $(X, T)$.
\end{definition}%%%%%%%%%%%%
%%%%%%%%%%%%%%%%%%%%%%%%%%%%%%%%%%%%

We often use the following notation throughout this paper when denoting telic problems, in order to make all the variables and objects clear from setup.
\begin{equation*}
\ccTELIC_{(X, T)}(v,\{h^{(i)}\},\{B^{(i)}\}) =\left\{1^n : \exists s \in I_n^d \text{ s.t. }T_{v(n)}^{n}(h^{(n)}(s)) \in B^{(n)}_{v(n)}\right\}.
\end{equation*}

Figure \ref{figTelic} supplies a pictorial representation of the central idea of telic problems.
Some comments explaining the decisions made in the definition are in order.

\begin{figure}
    \centering
    \includegraphics[scale=.19]{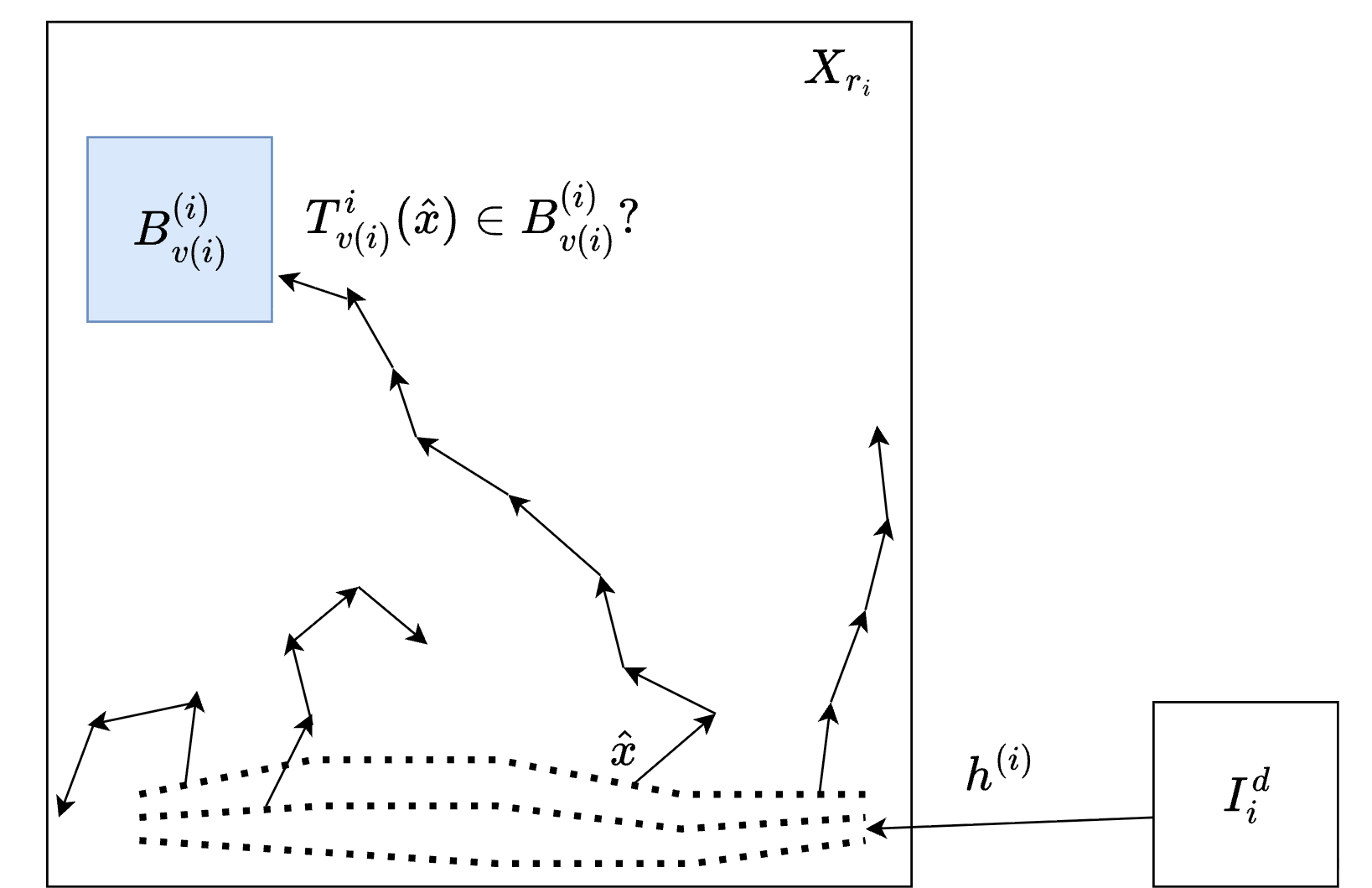}
    \caption{A representation of the central idea of telic problems. A map $h^{(i)}$ carries points from the unit dyadic rationals (tuples of binary strings of length $i$) into points in state space in an orderly fashion. The problem is to decide whether one of the points --- $\hat{x}$ in this case --- evolves to a region nearby or inside of the target set $B^{(i)}_{v(i)}$ after $i$ of iterations of the discretization of the map.}
    \label{figTelic}
\end{figure}

First, we comment that the value $v(n)$ given by $\ccV(1^n)$ is used to define the ``operating precision" for the given instance $1^n$ of the telic problem. That is, it determines the ``resolution" at which we wish to measure the dynamics. Most simply, $\ccV(1^n) = 1^n$, but this makes less sense when $n$ is small, and allowing the operating precision to change provides more generality.

Our reasoning for allowing the maps $g^{(i)}$ to carry points into a finer discretization $X_{r_i}$ rather than the coarser $X_i$, is because the finer precision provides more flexibility in choosing where to iterate initial points from. That is, if $r_i=i$ then $g^{(i)}$ may map $I^d_i$ ``entirely onto" $X_i$, which provides less flexibility in structuring telic problems.

In this paper the function $H$ is always taken to be either the identity or $T$.

Notice we have the following polynomial time algorithm for $\ccB$: since $H$ is an efficiently discretizable bijection with inverse, to decide membership in $B^{(i)}_{v(i)}$, it suffices to check whether
\[\hat{H}^{-\ell}_{r_i}(x)\in \hat{H}^{-\ell}_{r_i}(B^{(i)}_{v(i)}),\]
since $\hat{H}^{-\ell}_{r_i}(B^{(i)}_{v(i)})$ is the discretization of a closed rectangle where deciding membership is easy. Specifically, the condition that $H$ commute with $T$ is used so that solutions to a telic problem before and after applying $H$ are identical, since
\[
H \circ T = T \circ H \implies H \circ T^k = T^k \circ H \text{ for all $k \in \NN$,}
\]
so $T^k(x) \in A \iff T^k(H(x)) \in H(A)$.

\begin{remark}
Telic problems can be naturally generalized further by varying the number of \emph{iterations} of the map $T$ under consideration. Namely, define a polynomial time Turing machine $\ccK:\{1\}^*\rightarrow \{1\}^*$, and decide membership in the unary language
\begin{equation*}
\ccTELIC_{(X, T)}(\ccV,\ccH,\ccB) =\left\{1^n : \exists s \in I_n^d \text{ s.t. } \ccB(\ccM_T(\ccH(\langle s\rangle), \ccK(1^n), \ccV(1^n))) = 1\right\}.
\end{equation*}
All of the following results remain the same for this more general problem, however we do not consider it here in an attempt at simplifying components of the exposition. In the third part of this work \cite{everett3} we do work at this level of generality and derive useful consequences.
\end{remark}

We give a simple example of a telic problem.

\begin{example}
Consider the logistic map $T(x) = 4x(1-x)$ carrying the unit interval $I$ into itself.
The transformation is efficiently discretizable.
Define a sequence of target sets so that $B^{(n)} = R^{(n)} = [a, b] \subset [0,1]$ with $b-a \leq 1/2^n$ for all $n \geq 1$.
Let $g^{(n)}:I_n\rightarrow I_{n^5} \cup \{\perp\}$\footnote{Choice of $n^5$ is arbitrary} be defined so that $g^{(n)}(x) = x^2$ for all $n\in \NN$, on the inputs $x \in I_n$ for which $g^{(n)}(x) \neq \perp$. The function $x^2$ is feasibly queryable: it satisfies the neighborhood preservation property on $I$, and can be computed by a polynomial-time Turing machine.
In particular, the $g^{(n)}$ preserve the lexicographic ordering of the length $n$ binary strings in $[0,1]$ with respect to $<$.
The logistic map with the given parameter value is ``chaotic," so it would be expected that determining, for each $n$, whether one of the $2^n$ points in $\image(g^{(n)})$ evolves into $B^{(n)}$ is intractable for most choices of target set sequences $\{B^{(n)}\}_{n=1}^\infty$.
\end{example}

Telic problems are deemed to be \emph{easy} or \emph{hard} in the usual way: namely, a telic problem is \emph{easy} or \emph{tractable} if it can be decided by an $O(n^c)$-time Turing machine for $c \in \NN$, and the problem is \emph{hard} or \emph{intractable} otherwise. If a dynamical system has associated easy (hard) telic problems, we say the dynamical system \emph{admits} easy (hard) telic problems.

The following remark is primarily of concern to any complexity theorist reading this paper.

\begin{remark}\label{remTally}
Telic problems are decision problems for \emph{unary} languages. Hence, when discussing the existence of easy and hard telic problems, we mean with respect to a uniform model of computation since $\textsf{TALLY} \subset \ccPpoly$.
However, it is quite reasonable to conjecture there exist telic problems which are intractable in the uniform setting; first, note that telic problems are only in $\ccPpoly$ because of the ability to use advice strings, which in general does not imply there are efficient Turing machines deciding instances of telic problems.
In addition, there exists sparse languages in $\ccNP\setminus \ccP$ if and only if $\textsf{E} \neq \textsf{NE}$ \cite{hartmanis1983sparse}. In particular, if $\textsf{E} \neq \textsf{NE}$ then there is a unary language in $\ccNP\setminus \ccP$. Hence, it is reasonable to argue that there may exist hard telic problems---i.e. $\ccNP$-intermediate telic problems.
\end{remark}

%%%%%%%%%%%%%%%%%%%%%%%%%%%%%%%%%%%%%%%%%%%%%%%%%%%%%%%%
\subsection{Basic properties of telic problems}
%%%%%%%%%%%%%%%%%%%%%%%%%%%%%%%%%%%%%%%%%%%%%%%%%%%%%%%%
Efficient discretizations of dynamical systems faithfully capture the true dynamics in a way described by the following proposition. Informally, iterating a discretization of a system brings points nearby a target set $B^{(i)}_{v(i)}$ if and only if the true dynamics also carry a nearby point into a neighborhood of the target set $B^{(i)}$ in the same number of iterations.
This trivial but important fact allows for correct application of machinery from the theory of dynamical systems to the study of telic problems, since we can be sure an instance of a telic problem has a positive answer if and only if the true underlying system also has a positive answer to the telic problem \emph{before} discretizing.

\begin{proposition}\label{propCorrectSim}
Let $(X, T)$ be an efficiently discretizable dynamical system with $\ccM_T$ the discretization of $T$, and let
\[
\ccTELIC_{(X, T)}(v, \{h^{(i)}\}, \{B^{(i)}\}) \coloneqq \ccTELIC_{(X, T)}(\ccV,\ccH,\ccB)
\]
be some telic problem coming from $(X, T)$. Then for all $n \in \NN$, and any $s \in I_n^d \setminus (h^{(n)})^{-1}(\perp)$, we have
\[
\ccB(\ccM_T(\ccH(\langle s\rangle), 1^n, \ccV(1^{n}))) = 1 \iff \rho(T^{n}(y), B^{(n)}) \leq 1/2^{v(n)},
\]
for some $y \in \ccD^{-1}_{n}(\ccH(\langle s\rangle))$.
\end{proposition}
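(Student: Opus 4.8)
The plan is to peel off the machine-level notation until the statement reduces to a single geometric fact about the rounding operator $\ccD_{v(n)}$, which then follows from the triangle inequality. First I would apply \cref{defDynDisc} to the left-hand side: since $\ccH(\langle s\rangle)=\langle h^{(n)}(s)\rangle$ and $\ccV(1^n)=1^{v(n)}$,
\[
\ccM_T(\ccH(\langle s\rangle),1^n,\ccV(1^n))=\ccM_T(\langle h^{(n)}(s)\rangle,1^n,1^{v(n)})=\ccD_{v(n)}(T^n(y))=T^n_{v(n)}(h^{(n)}(s)),
\]
where $y$ is exactly the point on which the discretization internally operates --- equal to $h^{(n)}(s)$ when $X$ contains its discretizations and a preimage of $h^{(n)}(s)$ otherwise --- and this $y$ serves as the witness for the existential quantifier on the right. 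Since $\ccB(x)=1$ precisely when $x\in B^{(n)}_{v(n)}=\ccD_{v(n)}(B^{(n)})$, the whole equivalence collapses to
\[
\ccD_{v(n)}(T^n(y))\in \ccD_{v(n)}(B^{(n)}) \iff \rho(T^n(y),B^{(n)})\le 1/2^{v(n)}.
\]

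The two properties of $\ccD_r$ that drive both directions are that it returns a nearest element of $X_r$ and hence satisfies $\|z-\ccD_r(z)\|_2\le 1/2^r$ for every $z$. For the forward implication I would note that if $\ccD_{v(n)}(T^n(y))=\ccD_{v(n)}(b)$ for some $b\in B^{(n)}$, then both $T^n(y)$ and $b$ round to this common grid point, so $\rho(T^n(y),B^{(n)})\le\|T^n(y)-b\|_2$ is controlled by the operating tolerance $1/2^{v(n)}$, the nearest-point property of $\ccD_{v(n)}$ absorbing the constant that the two separate rounding errors would otherwise contribute. For the backward implication I would take $b\in B^{(n)}$ realizing $\rho(T^n(y),B^{(n)})\le 1/2^{v(n)}$ and show that the cell into which $T^n(y)$ rounds is the cell of some point of $B^{(n)}$. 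Here the structure of the target is essential: $B^{(n)}=\hat H^\ell(R^{(n)})\cap X$ is the image of a closed rational rectangle, which in the typical case $H=\Id$ is a genuine rectangle so that $\ccD_{v(n)}(B^{(n)})$ absorbs the rounding of any endpoint within tolerance, and in general is reduced to this case using that $\hat H$ is efficiently discretizable with inverse (the membership test for $\ccB$ already pulls back through $\hat H_{r_n}^{-\ell}$ to the rectangle).

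The step I expect to be the main obstacle is precisely this rounding bookkeeping in the backward direction near the boundary of $B^{(n)}$: one must exclude the possibility that a point within distance $1/2^{v(n)}$ of the target rounds to an adjacent grid point lying just outside $\ccD_{v(n)}(B^{(n)})$, and pinning the constant down is exactly where the closed-rectangle origin of $B^{(n)}$ and the nearest-point definition of $\ccD_{v(n)}$ must be used with care. The two cases of \cref{defDynDisc}, according to whether $X$ contains its discretizations, can be handled in parallel since they differ only in the identity of the internal witness $y$ and not in the geometric argument. Finally, I would remark that the commuting relation $H\circ T=T\circ H$ is not invoked here --- it governs how solutions transform when $H$ is applied, and is used elsewhere to relate telic problems before and after $H$, not the correctness of a single discretized simulation.
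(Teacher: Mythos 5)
Your proposal follows essentially the same route as the paper's proof: unfold $\ccM_T$ via \cref{defDynDisc} to obtain $\ccD_{v(n)}(T^{n}(y))$ with $y$ the internal witness, identify $\ccB=1$ with membership in $B^{(n)}_{v(n)}=\ccD_{v(n)}(B^{(n)})$, and reduce everything to a geometric statement about the rounding operator, which the paper settles by citing ``the definition of $\ccD_r$'' and dismissing the converse as following ``just as easily.'' The boundary subtlety you single out in the backward direction --- that a point within distance $1/2^{v(n)}$ of $B^{(n)}$ could round to an adjacent grid point lying just outside $\ccD_{v(n)}(B^{(n)})$, and the matching factor-of-two slack in the forward direction from two separate rounding errors --- is a genuine imprecision that the paper's one-line treatment does not engage with, so your write-up is, if anything, more careful than the original.
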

\begin{proof}
Suppose $\ccB(\ccM_T(\ccH(\langle s\rangle), 1^n, \ccV(1^{n}))) = 1$. Since $\ccM_T$ is the discretization of $T$, we have by definition
\[
\ccM_T(\ccH(\langle s\rangle), 1^n, \ccV(1^{n})) = \ccD_{v(n)}(T^{n}(y)) \in X_{v(n)}
\]
where either $y=\ccH(\langle s\rangle)$ or $y \in \ccD_{v(n)}^{-1}(\ccH(\langle s\rangle))$. So $\ccB(\ccM_T(\ccH(\langle s\rangle), 1^n, \ccV(1^{n}))) = 1$ implies $\ccD_{v(n)}(T^{n}(y)) \in B^{(n)}_{v(n)}$. Hence, by the definition of the operator $\ccD_r$, $\rho(T^{n}(y)), B^{(n)}) \leq 1/2^{v(n)}$, and in particular $T^{n}(y) \in \ccD_{v(n)}^{-1}(B^{(n)}_{v(n)})$.

The converse follows just as easily.
\end{proof}

\begin{lemma}\label{lemInNP}
Telic problems are contained in $\emph{\ccNP}$.
\end{lemma}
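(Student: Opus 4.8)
The plan is to verify the certificate--verifier characterization of $\ccNP$ directly, using a witness $s \in I_n^d$ as the certificate. The input to a telic problem is the unary string $1^n$, so I must produce a polynomial $p$ and a polynomial-time verifier $\ccN$ for which $1^n \in \ccTELIC_{(X,T)}(\ccV,\ccH,\ccB)$ if and only if there exists $u \in \{0,1\}^{p(n)}$ with $\ccN(1^n, u) = 1$. The whole content of the lemma is that every machine appearing in \cref{deftelic} was required to run in polynomial time, and that a witness $s$ is short because the input is given in unary.

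First I would bound the certificate length. A point $s \in I_n^d$ is a $d$-tuple of dyadic rationals of precision $n$, each of the form $w/2^n$ with $w \in \{0,\dots,2^n-1\}$; by \cref{defFeasibleQuery} such an $s$ is encoded in exactly $dn$ bits. Since $d \leq m$ is a fixed constant for the system $(X,T)$, the certificate length $dn$ is $O(n) = O(|1^n|)$, so I may take $p(n) = dn$, rejecting malformed strings.

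Next I would describe $\ccN$ and argue it runs in polynomial time. On input $(1^n, s)$ it computes the quantity $\ccB(\ccM_T(\ccH(\langle s\rangle), 1^n, \ccV(1^n)))$ from \cref{deftelic} and accepts precisely when it equals $1$. This is a composition of the four polynomial-time machines $\ccH$, $\ccV$, $\ccM_T$, and $\ccB$, so the only thing to check is that the intermediate strings remain polynomially bounded in $n$: the polynomial-time machine $\ccH$ computing $h^{(n)}=\hat H_{r_n}^{\ell}\circ g^{(n)}$ outputs the point $\ccH(\langle s\rangle)\in X_{r_n}$ using $\poly(n)$ bits (a polynomial-time machine can write only polynomially many bits, whence $r_n = \poly(n)$); the machine $\ccV$ outputs $1^{v(n)}=1^{Cn}$ of length $O(n)$; feeding these together with $1^n$ into $\ccM_T$, which is polynomial-time in its input length, yields $\ccD_{v(n)}(T^n(y))\in X_{v(n)}$, again of size $\poly(n)$; and finally $\ccB$ runs in polynomial time on this point. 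Composing, $\ccN$ halts in time polynomial in $n$. The one bookkeeping point is the garbage symbol: when $\ccH(\langle s\rangle)=\perp$ (equivalently $g^{(n)}(s)=\perp$) the verifier should reject immediately, which it detects in constant time.

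With these two ingredients the equivalence is immediate from the definition of the language in \cref{deftelic}: $1^n$ lies in $\ccTELIC_{(X,T)}(\ccV,\ccH,\ccB)$ exactly when some $s \in I_n^d$ makes $\ccN$ accept. I expect no genuine obstacle; the only step demanding care is confirming the output precisions $r_n$ and $v(n)$ are polynomially bounded in $n$, so that no intermediate string blows up, and this follows because a polynomial-time machine produces only polynomially many output bits.
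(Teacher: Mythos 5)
Your proposal is correct and follows essentially the same route as the paper's proof: exhibit the witness $s \in I_n^d$ as a certificate of length $O(n)$ and observe that the verifier is a composition of the polynomial-time machines $\ccH$, $\ccV$, $\ccM_T$, and $\ccB$, whose intermediate outputs stay polynomially bounded. Your version merely spells out the bookkeeping (certificate length $dn$, the bound on $r_n$, the $\perp$ case) that the paper leaves implicit.
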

\begin{proof}
Let $(X, T)$ be an efficiently discretizable dynamical system with $\ccM_T$ the discretization of $T$, and let $\ccTELIC_{(X, T)}(v, \{h^{(i)}\}, \{B^{(i)}\})$ be some telic problem coming from $(X, T)$.
Presented with an instance $1^n$, we have
\[
1^n \in \ccTELIC_{(X, T)}(v, \{h^{(i)}\}, \{B^{(i)}\}) \iff \exists s \in I_n^d
\]
such that $\ccB(\ccM_T(\ccH(\langle s\rangle), 1^n, \ccV(1^{n}))) = 1$.

But notice $\ccH, \ccM_T$, and $\ccB$ all run in time polynomial in their input lengths, which are at most polynomial in $n$. Hence their compositions do as well.
As such, $s$ serves as a valid certificate and
\[\ccTELIC_{(X, T)}(v, \{h^{(i)}\}, \{B^{(i)}\}) \in \ccNP.\]
\end{proof}

As noted in \cref{remTally}, telic problems as defined in this paper are not $\ccNP$-complete unless $\ccP=\ccNP$. Nonetheless, this fact does not preclude the existence of hard telic problems. Moreover, variations of telic problems can be easily defined which are $\ccNP$-complete, but for the purpose of applying computational complexity theory to dynamical systems considering these more complicated problems offers little to no benefit.

%%%%%%%%%%%%%%%%%%%%%%%%%%%%%%%%%%%%%%%%%%%%%%%%%%%%%%%%%%%%%%%%%%%%%%%
\subsection{Reductions and transformations between telic problems}
%%%%%%%%%%%%%%%%%%%%%%%%%%%%%%%%%%%%%%%%%%%%%%%%%%%%%%%%%%%%%%%%%%%%%%%
When we speak of \emph{reductions} between telic problems, we mean Karp reductions (polynomial time mapping reductions), although Turing reductions work just as well.
If telic problem $\ccT_{(X, T)} \coloneqq \ccTELIC_{(X, T)}(\ccV,\ccH,\ccB)$ coming from $(X, T)$ reduces to telic problem $\ccT_{(Y, S)} \coloneqq \ccTELIC_{(Y, S)}(\ccV',\ccH',\ccB')$ coming from $(Y,S)$, we write $\ccT_{(X, T)} \leq_p \ccT_{(Y, S)}$. In particular, suppose telic problem $\ccT_{(X, T)}$ reduces to $\ccT_{(Y, S)}$ in polynomial time, and $\ccT_{(Y, S)}$ has a polynomial time solution. Then $\ccT_{(X, T)}$ is solvable in polynomial time. And, if $\ccT_{(X, T)}$ is hard, then $\ccT_{(Y, S)}$ is hard.

Recall from the introduction that if $(X, T)$ and $(Y, S)$ are efficiently discretizable topological dynamical systems, and every telic problem coming from $(X, T)$ reduces to a telic problem coming from $(Y, S)$, we shall say $(X, T)$ \emph{reduces} to $(Y, S)$, and write $(X, T)\leq_p (Y, S)$. We then say $(X, T)$ and $(Y, S)$ are \emph{Turing equivalent} if $(X, T) \leq_p (Y, S)$ and $(Y, S) \leq_p (X, T)$, writing $(X, T)\sim_p (Y,S)$.

A primary problem identified in this paper is of determining invariants of Turing equivalence; \emph{if two efficiently discretizable dynamical systems are Turing equivalent, what properties do they share}?\footnote{Note that many of these questions can be formulated in a category-theoretic language if desired; objects are telic problems and the morphisms are Karp reductions. Natural questions then arise about correspondence in structure between such categories and categories of dynamical systems.} Determining a suitable invariant---if one exists---would essentially prove the existence of hard telic problems, since all dynamical systems admitting only easy telic problems are necessarily all Turing equivalent.

As such, we are led to focus on \emph{reductions} between telic problems and the extent to which polynomial time reductions between telic problems imply the associated dynamical systems share certain properties. We posit the positivity of topological entropy is one possible invariant. These ideas are brought together into the following proposition, which demonstrates how understanding the connection between the behavior of a dynamical system and the associated telic problems is related to the $\ccP$ vs.\ $\ccNP$ problem.

\begin{proposition}\label{propReductionEntropy}
Suppose positivity of topological entropy is an invariant of Turing equivalence, i.e.\ $h(T)>0$ if and only if $h(S)>0$ whenever $(X, T) \sim_p (Y, S)$. Then there exists hard telic problems, and in particular, $\ccP\neq \ccNP$.
\end{proposition}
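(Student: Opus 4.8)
The plan is to argue by contrapositive: I will assume that all telic problems are easy (tractable), and show that positivity of topological entropy cannot then be an invariant of Turing equivalence. This immediately yields both conclusions of the proposition, since the contrapositive of ``no hard telic problems exist'' is ``hard telic problems exist,'' and by \cref{lemInNP} every telic problem lies in $\ccNP$, so a hard telic problem—one not decidable in polynomial time—witnesses $\ccP \neq \ccNP$.

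\textbf{Key steps.} First I would observe that if \emph{every} telic problem coming from \emph{every} efficiently discretizable dynamical system is tractable, then any two such systems $(X,T)$ and $(Y,S)$ are trivially Turing equivalent: every telic problem coming from $(X,T)$ is decidable in polynomial time, hence reduces in polynomial time to any fixed trivial telic problem coming from $(Y,S)$ (for instance by the reduction that discards its input and outputs a fixed yes- or no-instance according to the answer, which it can compute in polynomial time), and symmetrically in the other direction. Thus $\sim_p$ collapses to a single equivalence class containing all efficiently discretizable systems. Second, I would exhibit two concrete efficiently discretizable systems that are distinguished by positivity of topological entropy: by \cref{lemEfficientDiscretization}, a rigid circle rotation $(S^1, R_a)$ is efficiently discretizable and has $h(R_a) = 0$ (rotations are isometries, hence have zero topological entropy), while the tent map $(I, T)$ is efficiently discretizable and has $h(T) = \log 2 > 0$. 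Under the standing assumption these two systems are Turing equivalent, i.e.\ $(S^1, R_a) \sim_p (I, T)$, yet $h(R_a) = 0 < \log 2 = h(T)$. This directly contradicts the hypothesis that positivity of topological entropy is an invariant of Turing equivalence. Therefore the standing assumption fails: some telic problem is hard, and being in $\ccNP \setminus \ccP$ it certifies $\ccP \neq \ccNP$.

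\textbf{Main obstacle.} The delicate point is the first step—verifying that universal tractability really forces all systems into one $\sim_p$ class. One must check that the reduction witnessing $(X,T) \leq_p (Y,S)$ genuinely fits the definition of a reduction between telic problems (Karp or Turing) and is not vacuous. Since telic problems are unary languages, a polynomial-time decider for a telic problem coming from $(X,T)$ lets a reduction compute the correct answer bit outright and then map to a canonical yes/no instance of a target telic problem coming from $(Y,S)$; I would need to confirm that at least one trivial telic problem (with, say, a target set equal to a fixed rectangle and $h^{(i)}$ a constant feasibly-queryable map) exists for $(Y,S)$ to serve as this canonical target, which follows from \cref{deftelic} since such a problem satisfies neighborhood preservation vacuously and is itself decidable in polynomial time. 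The entropy facts in the second step are standard and require no argument beyond citing that isometries have zero entropy and that the tent map has entropy $\log 2$, so the conceptual weight of the proof rests entirely on cleanly establishing the collapse of $\sim_p$ under the tractability hypothesis.
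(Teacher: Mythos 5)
Your proposal is correct and is essentially the paper's own argument: both proofs hinge on the same pair of witnesses (the tent map with $h=\log 2$ and a rigid rotation with $h=0$, both efficiently discretizable by \cref{lemEfficientDiscretization}) together with the observation that polynomial-time-decidable telic problems Karp-reduce to any nontrivial telic problem, so the two systems would be Turing equivalent unless a hard telic problem exists. The only cosmetic difference is that your contrapositive framing lets you skip citing \cref{propTransitivity} (which the paper uses to establish unconditionally that the rotation admits only easy telic problems, thereby pinning down which direction of the reduction must fail), since under your global tractability assumption this is automatic.
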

\begin{proof}
Let $T:I\rightarrow I$ be the tent-map, and $R_\alpha:S^1 \rightarrow S^1$ be a rigid rotation of the circle for $\alpha \in \AA$. Then $h(T) = \log2$ and $h(R_{\alpha}) = 0$. In addition, \cref{propTransitivity} demonstrates that $(S^1, R_\alpha)$ does not admit hard telic problems. Hence, assuming the antecedent, there is a telic problem $\ccT_{(X, T)}$ that is not Karp reducible to any telic problem coming from $(S^1, R_\alpha)$. But since $(S^1, R_\alpha)$ admits nontrivial telic problems, this implies $\ccT_{(I, T)} \not\in \ccP$.
\end{proof}

Informally, \cref{propReductionEntropy} asserts that if a system with positive topological entropy is only polynomial time reducible to other systems with positive entropy, then there exists hard telic problems.

A first step in understanding the relationship between reductions and preservation of dynamical properties is to understand the connection with conjugacies, since topological conjugacies of dynamical systems preserve topological invariants, such as entropy.
Given the close connection between telic problems and the dynamics, one would expect that conjugacies can act as reductions in certain instances. The following theorem shows this is indeed the case. Recall an efficiently discretizable state space $X$ contains its $r$-discretizations if $X_r \subset X$ for all $r \in \NN$.

\begin{theorem}\label{thmConjugacyHardness}
Let $X \subset \RR$ be a closed interval, and let $(X, T)$ and $(X, S)$ be two efficiently discretizable dynamical systems.
Suppose $\varphi:X\rightarrow X$ is a conjugacy of the two systems that is efficiently discretizable with inverse.
Then $(X, T)$ admits a hard telic problem if and only if $(X, S)$ admits a hard telic problem.
\end{theorem}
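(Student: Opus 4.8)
The plan is to transport telic problems across the conjugacy. Since $\varphi^{-1}$ is itself an efficiently discretizable conjugacy (with inverse) of $(X,S)$ with $(X,T)$, the statement is symmetric in the two systems, so it is enough to prove that if every telic problem coming from $(X,S)$ is easy then every telic problem coming from $(X,T)$ is easy; contrapositively, a hard $T$-problem yields a hard $S$-problem. Fix an arbitrary telic problem coming from $(X,T)$. Using the commutation identity built into \cref{deftelic} --- which equates the problem with data $(h^{(i)}=\hat H^\ell\circ g^{(i)},\,B^{(i)}=\hat H^\ell(R^{(i)}))$ to the one with data $(g^{(i)},R^{(i)})$ --- I may assume $H=\Id$, so that the problem has feasibly queryable maps $g^{(n)}$, closed rational-rectangle targets $R^{(n)}$, and, by \cref{propCorrectSim}, computes the language $L_T=\{1^n:\exists\, s\in I_n^d,\ T^n(g^{(n)}(s))\in R^{(n)}\}$ up to the metric slack of the discretization. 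Because $X\subset\RR$ we have $d=1$, so $\varphi$ is a monotone homeomorphism of an interval and carries closed subintervals to closed subintervals.

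I would then build the transported $S$-problem with initial maps $\varphi\circ g^{(n)}$ and targets $\varphi(R^{(n)})$, letting the conjugacy $\varphi$ occupy the role of the target-set bijection. The conjugacy relation $S^n\circ\varphi=\varphi\circ T^n$ together with injectivity of $\varphi$ gives, for each $s$,
\[
S^n\!\big(\varphi(g^{(n)}(s))\big)\in\varphi(R^{(n)})\iff\varphi\!\big(T^n(g^{(n)}(s))\big)\in\varphi(R^{(n)})\iff T^n(g^{(n)}(s))\in R^{(n)},
\]
so at the continuum level the transported language coincides with $L_T$. The data are admissible: $\varphi\circ g^{(n)}$ is polynomial-time computable (a composition of the feasibly queryable $g^{(n)}$ with the efficiently discretizable $\varphi$) and inherits the neighborhood preservation property, since the monotone homeomorphism $\varphi$ sends each subinterval $\overline{B_\delta(y_0)}\cap X$ onto a subinterval $\overline{B_{\delta'}(y_0')}\cap X$ and fixes $\perp$; and membership in $\ccD_{v'(n)}(\varphi(R^{(n)}))$ is decided in polynomial time by applying the efficiently discretizable inverse $\varphi^{-1}$ and testing membership in the rational rectangle $R^{(n)}$. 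This is precisely the mechanism \cref{deftelic} uses for targets $\hat H^\ell(R)$, except that the commutation of $H$ with the dynamics is now replaced by the conjugacy identity above; verifying that the framework genuinely accommodates a conjugating (rather than commuting) bijection in this role is the first point requiring care.

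The step I expect to be the main obstacle is reconciling the exact continuum identity with the fact that telic languages are determined by \emph{discretized} computations. By \cref{propCorrectSim} the $T$-side answer records whether $\rho(T^n(\cdot),R^{(n)})\leq 1/2^{v(n)}$, whereas the transported $S$-side answer records whether $\rho(\varphi(T^n(\cdot)),\varphi(R^{(n)}))\leq 1/2^{v'(n)}$; since $\varphi$ distorts distances, the $\varphi$-image of the $1/2^{v(n)}$-neighborhood of $R^{(n)}$ is generally not the $1/2^{v'(n)}$-neighborhood of $\varphi(R^{(n)})$, so borderline witnesses $s$ could flip the answer. To force $L_S=L_T$ on the nose I would first absorb the slack into a rational target, replacing $R^{(n)}$ by its closed $1/2^{v(n)}$-neighborhood (again a rational rectangle) so that the $T$-condition becomes exact membership, and then choose the operating precision $v'(n)$ for the $S$-problem using the polynomially computable moduli of continuity of $\varphi$ and $\varphi^{-1}$ --- both available because $\varphi$ is efficiently discretizable with inverse --- so that rounding at scale $1/2^{v'(n)}$ cannot carry $\varphi$ of any lattice point across the boundary of $\varphi(R^{(n)})$ relative to its $T$-preimage. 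The delicate part is arguing that the precision $v'(n)$ demanded by this bookkeeping stays polynomial in $n$, so that $\ccV$ remains a legitimate polynomial-time machine; granting this, tractability of the transported $S$-problem produces a polynomial-time decider for $L_T$, and the symmetric argument through $\varphi^{-1}$ yields the equivalence.
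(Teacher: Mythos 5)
Your proposal follows essentially the same route as the paper's proof: transport the telic problem through the conjugacy, taking $\varphi\circ g^{(n)}$ (in the paper, $\hat\varphi_{r_i}\circ h^{(i)}$) as the new initial maps and $\varphi(B^{(n)})$ as the new targets, then use the intertwining relation to identify the two languages and conclude by symmetry. The only divergence is that the paper bypasses your discretization-slack analysis entirely by asserting the discretized identity $\varphi_r\circ T_r = S_r\circ \varphi_r$ directly from the definitions and keeping the same operating precision $v$, so the precision-adjustment bookkeeping you flag as the delicate step does not appear there --- your concern on that point is, if anything, a fair question to put to the paper's own one-line justification rather than a defect of your plan.
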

\begin{proof}
Since $\varphi$ is a conjugacy of the dynamical systems it is a homeomorphism, and $\varphi\circ T = S \circ \varphi$. In addition, $\varphi$ is assumed to be efficiently discretizable with inverse.
So, by \cref{defFuncDisc}, \cref{defDynDisc} and the assumption that $X$ contains its $r$-discretizations, we conclude $\varphi_r\circ T_r = S_r \circ \varphi_r$.

Extend $\varphi$ by putting $\hat{\varphi}:X\cup \{\perp\}\rightarrow X\cup\{\perp\}$ so that $\hat{\varphi}(x) = \varphi(x)$ for all $x \in X$ and $\hat{\varphi}(\perp) = \perp$.

Given an instance of a telic problem $\ccTELIC_{(X, T)}(v, \{h^{(i)}\}, \{B^{(i)}\})$ coming from $(X, T)$, the instance can be transformed into a telic problem 
\[
\ccTELIC_{(X, S)}(v, \{f_i\}, \{A^{(i)}\}) = \ccTELIC_{(X, S)}(v, \{\hat{\varphi}_{r_i}\circ h^{(i)}\}, \{\varphi_{i}\left(B^{(i)}_{v(i)}\right)\})
\]
coming from $(X, S)$.
Note that since $h^{(i)} = \hat{H}^\ell_{r_i} \circ g^{(i)}$ for some $\hat{H}$ whose restriction to $X$ is a homeomorphism of the interval $X$, and $\hat{\varphi}$ is also an (efficiently discretizable) homeomorphism when restricted to $X$, it follows that the functions $\hat{\varphi}_{r_i}\circ h^{(i)}$ satisfy the definition of feasibly queryable maps. Furthermore, $\varphi_{i}\left(B^{(i)}_{v(i)}\right) \subset X$ is an interval. Consequently, $\ccTELIC_{(X, S)}(v, \{f_i\}, \{A^{(i)}\})$ forms a valid telic problem.

Because $\varphi_r\circ T_r = S_r \circ \varphi_r$, it follows that $T^{n}_{v(n)}(h^{(n)}(s)) \in B^{(n)}_{v(n)}$ if and only if $S^{n}_{v(n)}(f_n(s)) \in A^{(n)}_{v(n)}$ for all $n \in\NN$.
Hence we have obtained a polynomial time reduction of telic problems coming from $(X, T)$ to telic problems coming from $(X, S)$. The other direction follows by identical argument.
\end{proof}

\begin{remark}
With minor adjustments we may instead we take two systems $(X, T)$ and $(Y, S)$ with $X \neq Y$ and $X, Y \subset \RR$ closed intervals to obtain the same conclusion.
Similarly, by changing the definition of telic problems slightly, the theorem goes through for more general spaces $X \subset \RR^d$.
We do not prove these statements to maintain a simpler exposition.
\end{remark}

The following corollary is an immediate consequence of \cref{thmConjugacyHardness}.

\begin{corollary}
Under the conditions of \cref{thmConjugacyHardness}, if $\varphi$ is an efficiently discretizable semi-conjugacy and $(X, S)$ is a factor of $(X, T)$, then if $(X, S)$ admits a hard telic problem, so does $(X, T)$.
\end{corollary}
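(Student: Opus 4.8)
The plan is to reverse the direction of the reduction in \cref{thmConjugacyHardness}. There one pushes a telic problem \emph{forward} along the conjugacy; here, since $S$ is only a factor, we must pull a hard problem for $(X,S)$ \emph{back} along the semi-conjugacy $\varphi$ to produce a hard problem for $(X,T)$ (consistent with $h(S)\le h(T)$). Given a hard telic problem $\ccTELIC_{(X,S)}(v,\{f_i\},\{A^{(i)}\})$, I would construct a telic problem coming from $(X,T)$ with the \emph{same} unary yes-instances, so that the two define the same language. Using $\varphi\circ T = S\circ\varphi$, and hence $\varphi\circ T^{n}=S^{n}\circ\varphi$, the correct data is a target $B^{(n)}=\varphi^{-1}(A^{(n)})$ together with initial maps $g_n$ selecting a $\varphi$-preimage of $f_n$, i.e.\ $\varphi(g_n(s))=f_n(s)$ (one may take $H=\Id$ on the $T$-side). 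Then
\[
T^{n}_{v(n)}(g_n(s))\in B^{(n)}_{v(n)} \iff \varphi\bigl(T^{n}(g_n(s))\bigr)\in A^{(n)} \iff S^{n}(f_n(s))\in A^{(n)},
\]
with the discretized equivalences following from \cref{propCorrectSim} and the efficient discretizability of $\varphi$ exactly as in the proof of \cref{thmConjugacyHardness}; thus the two problems agree at every $n$.

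Two points must be checked to make this a legitimate telic problem coming from $(X,T)$. First, membership in the target is easy: to test whether a discretized point $x$ lies in $\ccD_{v(n)}(\varphi^{-1}(A^{(n)}))$ it suffices to compute $\varphi_{v(n)}(x)$, available since $\varphi$ is efficiently discretizable, and then run the original routine $\ccB$ for $A^{(n)}$, which is polynomial time. The only subtlety is that $\varphi^{-1}(A^{(n)})$ need not be the image of a rational rectangle under a commuting homeomorphism, as \cref{deftelic} nominally requires; this is handled by a mild generalization of the admissible target sets, permitting any closed set equipped with a polynomial-time membership test (a relaxation that affects none of the results, in the spirit of the remark following \cref{thmConjugacyHardness}), or, as in the next paragraph, is avoided outright by working on a monotone branch of $\varphi$, where the preimage of an interval is again an interval.

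The main obstacle is the construction of the initial maps $g_n$. Unlike the conjugacy case, $\varphi$ is not invertible, so there is no homeomorphism $\varphi^{-1}$ to compose with $f_n$; we have only forward, efficiently discretizable access to a $\varphi$ that is typically many-to-one, and a naive grid search for a preimage is exponential in the working precision. I would resolve this by exploiting the one-dimensionality of $X$: for the systems of interest $\varphi$ is piecewise monotone, so I fix a branch $J\subseteq X$ on which $\varphi|_J\colon J\to\varphi(J)$ is a homeomorphism and define $g_n(s)$ by locating the unique preimage of $f_n(s)$ in $J$ via a bisection that calls $\varphi_{v(n)}$ only polynomially many times. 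Drawing every preimage from the single continuous branch $J$ makes $g_n=(\varphi|_J)^{-1}\circ f_n$ a composition of a monotone continuous map with the neighborhood-preserving $f_n$, so $g_n$ inherits the neighborhood preservation property (\cref{defFeasibleQuery}) and is feasibly queryable; it simultaneously forces $\varphi^{-1}(A^{(n)})\cap J$ to be an interval, closing the target-set point above. The genuine difficulty, and the step requiring the most care, is guaranteeing that a hard problem for $(X,S)$ can be taken with its initial data and targets inside $\varphi(J)$---or, absent a clean branch decomposition, that the bisection-based preimage selection remains polynomial-time and neighborhood-preserving for a general efficiently discretizable continuous surjection. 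Once $g_n$ and $B^{(n)}$ are in hand, the constructed problem is a valid telic problem for $(X,T)$ defining the same language as the given hard problem for $(X,S)$; as that language has no polynomial-time decider, neither does the new one, so $(X,T)$ admits a hard telic problem.
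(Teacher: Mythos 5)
The paper offers no separate argument here: it presents the corollary as an immediate consequence of \cref{thmConjugacyHardness}, implicitly reusing the push-forward construction from that proof. Your diagnosis that the push-forward cannot work for a non-injective $\varphi$ is correct and worth making explicit: $T^{n}(x)\in B$ gives $S^{n}(\varphi(x))\in\varphi(B)$, but the converse fails once $\varphi$ folds, so the pushed-forward problem is no longer the same language and hardness does not transfer. Your pull-back strategy --- full preimage as target, a section of $\varphi$ composed with $f_n$ as initial data --- is the right shape of argument for this direction and is consistent with $h(S)\le h(T)$.

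However, the proposal has a genuine gap at the target set, beyond the difficulties you flag yourself. If you shrink the target to $\varphi^{-1}(A^{(n)})\cap J$ so that it becomes an interval, the two-way equivalence breaks: nothing confines $T^{n}(g_n(s))$ to the branch $J$, so $S^{n}(f_n(s))\in A^{(n)}$ only tells you that $T^{n}(g_n(s))$ lies somewhere in the full preimage $\varphi^{-1}(A^{(n)})$, possibly in a different branch. The branch-restricted problem is then only a sub-language of the given hard one, and a sub-language of a hard language need not be hard. So the target must be the full preimage; for non-injective $\varphi$ this is in general a union of closed intervals with non-dyadic endpoints and does not fit the form $\hat{H}^{\ell}(R^{(i)})\cap X$ required by \cref{deftelic}, so your ``mild relaxation'' of the admissible targets is doing real work and changes the statement being proved. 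On top of this, the two issues you concede --- arranging a hard $(X,S)$-problem with its data inside $\varphi(J)$, and the piecewise monotonicity of $\varphi$ (which efficient discretizability does not provide) --- are not side conditions but exactly where the proof lives. As written this is a plausible program rather than a proof; to your credit, it is considerably more honest about the obstructions than the one-line justification in the paper.
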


It would be desirable to show that if $(X, T)$ admits a hard telic problem while $(Y, S)$ does not, then the systems are not conjugate. However, at the moment we can only conclude there does not exist an \emph{efficiently discretizable} conjugacy of the two systems. One question posed in \cref{secProblems} is whether all conjugacies of efficiently discretizable dynamical systems also efficiently discretizable (Problem \ref{probEfficientConj}). Positive answers to these questions would reveal deep and surprising connections between the disciplines.

%%%%%%%%%%%%%%%%%%%%%%%%%%%%%%%%%%%%%%%%%%%%%%%%%%%%%%%%%%%%%%%%%%%%%%%
%%%%%%%%%%%%%%%%%%%%%%%%%%%%%%%%%%%%%%%%%%%%%%%%%%%%%%%%%%%%%%%%%%%%%%%
\section{Orbit structure and computational complexity}\label{secClassification}
%%%%%%%%%%%%%%%%%%%%%%%%%%%%%%%%%%%%%%%%%%%%%%%%%%%%%%%%%%%%%%%%%%%%%%%
%%%%%%%%%%%%%%%%%%%%%%%%%%%%%%%%%%%%%%%%%%%%%%%%%%%%%%%%%%%%%%%%%%%%%%%

This section begins uncovering the relationship between the behavior of a dynamical system and the complexity of solving its associated telic problems. This relationship furnishes a theory capable of classifying topological dynamical systems by proving lower bounds on the associated telic problems. Moreover, it points toward fundamental connections between dynamical systems and complexity theory, and by extension mathematical logic and definability theory via descriptive complexity theory.

%%%%%%%%%%%%%%%%%%%%%%%%%%%%%%%%%%%%%%%%%%%%%%%%%%%%%%%%%%%%%%%%%%%%%%%%
\subsection{First results}

We begin with the following proposition saying topological transitivity is not a sufficient condition for a dynamical system to admit a hard telic problem. The proof can be easily mimicked to prove that other simple systems, such as non-expansive systems, also only admit telic problems contained in $\ccP$. This indicates existence of telic problems coming from a system which are not contained in $\ccP$ implies the system ought to possess properties such as sensitivity to initial conditions, although this is open (see \cref{secProblems}).

\begin{proposition}\label{propTransitivity}
There exists an efficiently discretizable, topologically transitive dynamical system that does not admit hard telic problems.
\end{proposition}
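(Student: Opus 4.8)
The plan is to exhibit the irrational rotation $(S^1, R_\alpha)$ with $\alpha \in \AA$ irrational as the witness. Two of the three required properties are essentially free: $(S^1, R_\alpha)$ is efficiently discretizable by \cref{lemEfficientDiscretization}, and since $\alpha$ is irrational the rotation is minimal, so every forward orbit is dense and the system is topologically transitive. The entire content of the proposition is therefore the third claim --- that every telic problem coming from $(S^1, R_\alpha)$ lies in $\ccP$ --- and this is where I would spend all the effort. Fixing an arbitrary telic problem $\ccTELIC_{(S^1, R_\alpha)}(v, \{h^{(i)}\}, \{B^{(i)}\})$, I must produce, for this one fixed problem, a single algorithm deciding membership of $1^n$ in time $\poly(n)$, despite the image of $h^{(n)}$ containing up to $2^n$ points.

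The key structural observation I would isolate and prove as a lemma is that in dimension one the neighborhood preservation property of \cref{defFeasibleQuery} forces each $h^{(n)}$ to be a \emph{monotone} arrangement of its image along an arc of the circle. Forward preservation guarantees that consecutive grid points $s, s + 2^{-n}$ have nearby images, while the reverse direction of the iff does the real work: it prevents the image from folding back or winding around the circle (a multiply-winding map such as $x \mapsto kx$ would send all points $x$ with $x - x_0 \approx j/k$ into a single small ball about $g^{(n)}(x_0)$, violating the reverse implication, and the same mechanism rules out the image closing up into the full circle, so a gap remains). Consequently the non-$\perp$ image of $h^{(n)}$, listed in increasing order of the binary string $s$, traces out a monotone sequence along a proper arc of $S^1$; composing with the isometry $\hat H^\ell_{r_n}$ (recall $\hat H$ is the identity or $R_\alpha$, both order-preserving on the circle) does not disturb this.

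With monotonicity in hand I would reduce the decision to a one-dimensional interval-intersection problem solvable by binary search. Because $R_\alpha$ is an isometry, the discretized map $s \mapsto T^n_{v(n)}(h^{(n)}(s))$ is the original monotone arc rigidly shifted by $n\alpha$ and then rounded, hence still monotone (non-strictly) along an arc; and the target $B^{(n)}_{v(n)} = \ccD_{v(n)}(\hat H^\ell(R^{(n)}) \cap S^1)$ is itself a dyadic arc whose endpoints are computable in $\poly(n)$ time using the efficient manipulation of the algebraic number $\alpha$ from \cref{lemEfficientDiscretization}. Cutting the circle at a point of the gap unrolls everything onto a line, where the target meets the image arc in at most two subintervals; within each, the predicate ``$T^n_{v(n)}(h^{(n)}(s))$ lies at or beyond the left endpoint'' is monotone in $s$, so a binary search over the $2^n$ strings --- each step a single evaluation of the polynomial-time machines $\ccH$, $\ccM_T$, and $\ccB$, whose correctness relative to the true reachability condition is guaranteed by \cref{propCorrectSim} --- locates the extremal $s$ entering the target and decides whether any non-$\perp$ image point lands in $B^{(n)}_{v(n)}$. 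The search uses $O(n)$ evaluations of $\poly(n)$-time routines, so the whole procedure runs in polynomial time and the telic problem is in $\ccP$.

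I expect the main obstacle to be the structural lemma rather than the algorithm: making the passage from the ball-preserving iff of \cref{defFeasibleQuery} to a clean statement of monotonicity precise, in particular handling the $\perp$ outputs and ruling out nontrivial winding, since the cyclic topology of $S^1$ makes ``monotone along an arc'' more delicate than monotone on an interval. Once that lemma is established, the isometry property of the rotation makes the remaining reduction to binary search routine.
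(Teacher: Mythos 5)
Your proposal is correct and uses the same witness system, but it runs the algorithm in the opposite direction from the paper, which in fact explicitly flags your route (``one way to solve this telic problem is by effectively running a binary search\dots'') before choosing another. You push forward: you establish that the non-$\perp$ image of $h^{(n)}$ is monotone along an arc, observe that composing with the isometry $R_\alpha^n$ and the rounding operator preserves this monotonicity, and then binary-search the predicate ``$T^n_{v(n)}(h^{(n)}(s))$ lands in the target'' directly. The paper instead pulls back: it computes the endpoints of $B^{(n)}_{v(n)}$, applies the efficient discretization of $R_\alpha^{-1}$ for $n$ steps (and $\hat H^{-m}$) to obtain a preimage interval $\hat B^{(n)}_{v(n)}$, and then invokes the general-purpose $k$-ary search of \cref{lemEfficientSetDecidability} to decide whether some $g^{(n)}(s)$ lies in that interval. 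The trade-offs: your forward search needs only forward efficient discretizability and no hypothesis on the inverse, whereas the paper's pullback uses the (readily available) efficient discretization of $R_\alpha^{-1}$; on the other hand, the paper's version confines all searching to the $g^{(n)}$ side and reuses a lemma stated for arbitrary dimension and arbitrary finite unions of rational rectangles, which is why it ports more easily to the other ``simple systems'' alluded to after the proposition. Both arguments lean on the same unproved-in-detail structural fact --- that neighborhood preservation forces order preservation of the $g^{(n)}$ --- and you are right to identify that as the delicate point; the paper asserts it in one sentence at essentially the same level of rigor, so this is not a gap relative to the paper's own standard. One small caution: a binary search step that returns $\perp$ yields no ordering information, so your search must be organized (as in the paper's \cref{lemEfficientSetDecidability}, which treats $\perp$ as absorbed into the target ball) so that $\perp$ cells can be discarded or recursed into without breaking the monotonicity invariant.
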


Before proving the statement we give two simple lemmas needed throughout this section.

\begin{lemma}\label{lemEfficientDistCheck}
Let $X \subset \RR^d$ be efficiently discretizable, and let $B \subset X$ be a composed of a union of closed rational rectangles. Then, for all $r \in \NN$ and $x \in X_r\setminus \ccD_r(B)$, the distance $\rho(x, \ccD_r(B))$ (expressed as an algebraic number) is computable in time $\poly(r)$.
\end{lemma}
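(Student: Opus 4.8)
The plan is to reduce to a single rectangle and then exploit the product structure of the discretization operator. Write $B=\bigcup_{j=1}^{k}R_j$ as a finite union of closed rational rectangles $R_j=\prod_{i=1}^{d}[a_i^{(j)},b_i^{(j)}]$. Since $\ccD_r(B)=\bigcup_{j}\ccD_r(R_j)$, we have $\rho(x,\ccD_r(B))=\min_{1\le j\le k}\rho(x,\ccD_r(R_j))$, so it suffices to compute $\rho(x,\ccD_r(R))$ for one rectangle $R=\prod_i[a_i,b_i]$ and then minimize over the constantly many $R_j$.

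The key structural step is to show that $\ccD_r(R)$ is itself a discrete axis-aligned box. By the Remark following the definition of $\ccD_r$, the operator is coordinatewise rounding to the nearest precision-$r$ dyadic; write $[t]_r$ for the nearest multiple of $2^{-r}$ to $t\in\RR$. Because rounding acts coordinatewise and $R$ is a product of intervals, $\ccD_r(R)=\prod_i\{[t]_r:t\in[a_i,b_i]\}$. As $t$ sweeps $[a_i,b_i]$, monotonicity of $[\cdot]_r$ shows the attained values form the contiguous block of precision-$r$ dyadics from $\alpha_i:=[a_i]_r$ to $\beta_i:=[b_i]_r$ (every intermediate multiple of $2^{-r}$ is hit because the rounding cells tile the line, and $\alpha_i\le\beta_i$ since $a_i\le b_i$). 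Hence $\ccD_r(R)=\prod_i\big([\alpha_i,\beta_i]\cap\DD_r\big)$, where $\alpha_i,\beta_i\in\DD_r$ are computable in $\poly(r)$ time from the fixed rational data $a_i,b_i$.

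I would then use separability of the squared Euclidean distance over this product. For $x=(x_1,\dots,x_d)$ with each $x_i\in\DD_r$, minimizing $\sum_i(x_i-y_i)^2$ over $y\in\ccD_r(R)$ decouples into $d$ one-dimensional problems, and since $x_i$ is itself a multiple of $2^{-r}$ the closest element of $[\alpha_i,\beta_i]\cap\DD_r$ to $x_i$ is the clamp $c_i:=\max(\alpha_i,\min(x_i,\beta_i))\in\DD_r$. Therefore $\rho(x,\ccD_r(R))^2=\sum_i(x_i-c_i)^2=2^{-2r}\sum_i m_i^2$ with $m_i:=2^r|x_i-c_i|\in\ZZ_{\ge 0}$. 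As $R$ is fixed and the input $x\in X_r$ has coordinates of bit-length polynomial in $r$ (automatic when $X$ is bounded, as in the topological settings of interest, and otherwise one measures runtime in $r$ and the size of $x$), each $m_i$ has $\poly(r)$ bits, so $M:=\sum_i m_i^2$ is a nonnegative integer of $\poly(r)$ bit-length computed in $\poly(r)$ time, and $\rho(x,\ccD_r(R))=2^{-r}\sqrt{M}$ is the algebraic number specified as the nonnegative root of $2^{2r}t^2-M$. The minimum over the $k$ rectangles is then obtained by selecting the smallest radicand $M_j$ (the factor $2^{-r}$ being common), a $\poly(r)$-time integer comparison, with output the corresponding $2^{-r}\sqrt{M_{j^\ast}}$.

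The only genuinely nonroutine point is the structural claim that $\ccD_r(R)$ is exactly the discrete box $\prod_i([\alpha_i,\beta_i]\cap\DD_r)$ — that rounding a product of intervals sweeps out precisely the intermediate grid points and no others — which turns the nearest-point search into a separable coordinatewise clamp. Everything downstream is exact integer arithmetic, and the only quantitative check is that the radicand $M$ keeps bit-length $O(r)$, which follows from boundedness of the region containing $x$ and $R$. I expect this structural identification, together with the bookkeeping ensuring a $\poly(r)$-size algebraic representation of the output, to be the main (though mild) obstacle.
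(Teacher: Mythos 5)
Your proposal is correct and follows essentially the same route as the paper: reduce to a single rectangle by taking the minimum over the constantly many pieces, then compute the distance to an axis-aligned box coordinatewise (your clamp $c_i=\max(\alpha_i,\min(x_i,\beta_i))$ is the precise form of the paper's ``distance to the faces''). Your version is in fact more careful than the paper's, since you explicitly identify $\ccD_r(R)$ as the discrete box $\prod_i([\alpha_i,\beta_i]\cap\DD_r)$ and compute the distance to that set rather than to $R$ itself, and you record the exact algebraic representation $2^{-r}\sqrt{M}$.
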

\begin{proof}
If $B$ is simply a single rational rectangle, for $x \in X_r\setminus \ccD_r(B)$, $\rho(x, \ccD_r(B))$ is computable in polynomial time by taking the distances between $x$ (which has length $r$), and the faces composing $B$. Hence, suppose $B$ is the union of $q$ distinct closed rational rectangles $B = \cup_i B^{(i)}$. Then for each $B^{(i)}$, compute $\rho(x, \ccD_r(B^{(i)}))$ (as an algebraic number, see \cite{basu2006algorithms}), and return the smallest such value. Since $q$ is fixed, the algorithm runs in time $\poly(r)$.
\end{proof}

\begin{lemma}\label{lemEfficientSetDecidability}
Let $X \subseteq \RR^d$ be an efficiently discretizable space, and let $\{g^{(i)}\}_{i=1}^\infty$ be a sequence of feasibly queryable maps, with $g^{(i)}:I_i^d \rightarrow X_{r_i} \cup \{\perp\}$.
For any subset  $A \subset X$ composed of a finite union of rational closed rectangles, and any $n \in \NN$, there exists an algorithm that, on input $1^n$ returns an $s \in I_n^d$ such that $g^{(n)}(s) \in \ccD_{r}(A)$ for $r \geq n$, and $\perp$ otherwise, that runs in time $\poly(n+|\langle A\rangle |)$.
\end{lemma}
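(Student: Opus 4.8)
The plan is to turn the statement into a search over the grid $I_n^d$ that exploits the neighborhood preservation property to avoid inspecting all $2^{nd}$ grid points. First I would record the two computational primitives available for free. Since $\{g^{(i)}\}$ is feasibly queryable (\cref{defFeasibleQuery}), the value $g^{(n)}(s)$ is computable in time $\poly(n)$ for any $s \in I_n^d$ (and $\perp$-valued points, never lying in $\ccD_r(A)$, are simply never solutions). And since $A$ is a finite union of rational closed rectangles and $X$ is efficiently discretizable, membership in $\ccD_r(A)$ and the distance $\rho(\cdot, \ccD_r(A))$ are computable in time $\poly(r + |\langle A\rangle|)$ by \cref{lemEfficientDistCheck}. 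Writing $A = \bigcup_{j=1}^q A_j$ with each $A_j$ a single rational rectangle, it suffices to produce the algorithm for one rectangle and run it on each $A_j$ in turn, returning the first $s$ found or $\perp$ if all fail; as $q \le \poly(|\langle A\rangle|)$ this costs only a polynomial factor, so I reduce to the case where $A$ is a single rational rectangle.

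The core of the argument is a recursive bisection of the cube $I_n^d$ driven by neighborhood preservation. The idea is to maintain a current grid sub-box $C \subseteq I_n^d$ together with a computable closed ball $\overline{B}_C \subseteq X$ guaranteed to contain $g^{(n)}(C)$, where $\overline{B}_C$ is extracted from a constant number of queries to $g^{(n)}$ (at the center and corners of $C$), using the fact that neighborhood preservation forces the image of a grid-box to lie in a single ball. At each stage I test, via \cref{lemEfficientDistCheck}, whether $\overline{B}_C$ meets $\ccD_r(A)$; if it does not, then $C$ contains no preimage and that branch is discarded, while if it does, $C$ is halved in each coordinate into its $2^d$ sub-boxes and the search descends into a sub-box whose associated ball still meets $\ccD_r(A)$. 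After $O(n)$ levels of halving the current box is a single grid point $s$, at which point I check directly whether $g^{(n)}(s) \in \ccD_r(A)$ and output $s$ or $\perp$ accordingly. Because $d$ is a fixed constant and the depth is $O(n)$, the procedure makes only polynomially many queries and distance computations, provided the search descends a bounded number of branches per level.

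The main obstacle is exactly this last proviso: guaranteeing that the search can commit to a single branch at each level, rather than fanning out into the possibly exponentially many sub-boxes meeting the preimage boundary, while never discarding the branch that actually contains a preimage. This is where the full strength of the neighborhood preservation property, in particular its ``if and only if'' exactness, must be used. I would first prove a scale-propagation claim: neighborhood preservation at the finest scale $\varepsilon = \sqrt{d}/2^n$ forces the image of every dyadic sub-box $C$ to be \emph{exactly} the intersection of $\image(g^{(n)})$ with a ball $\overline{B}_C$, so that $g^{(n)}$ is an embedding of the cube whose restriction to any sub-box is ball-to-ball. Granting this, the approximation $\overline{B}_C$ is tight, in the sense that $\overline{B}_C \cap \ccD_r(A) \neq \emptyset$ genuinely witnesses a preimage inside $C$; hence descending into any single sub-box whose ball meets $\ccD_r(A)$ is both safe (it contains a preimage) and complete (if $C$ contains a preimage, some sub-box's ball meets $\ccD_r(A)$), which removes the branching and yields the polynomial bound. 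Establishing this tightness and propagation rigorously, transferring the single-scale, exact neighborhood condition into a usable ball bound at every dyadic scale, is the technical heart of the lemma; the one-dimensional case, where neighborhood preservation makes $g^{(n)}$ a monotone parametrization of an embedded arc and the recursion degenerates to ordinary binary search, is a clean illustration worth isolating first.
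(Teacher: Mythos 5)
Your proposal follows essentially the same route as the paper: a $2^d$-ary recursive subdivision of $I_n^d$, using \cref{lemEfficientDistCheck} for the distance tests and the neighborhood preservation property of feasibly queryable maps to justify committing to a single sub-box at each of the $O(n)$ levels of the search. The branch-commitment problem you isolate as the technical heart is precisely the step the paper discharges only by direct appeal to the neighborhood preservation condition (descending into the sub-cube whose transformed center lies closest to $\ccD_r(A)$), so your treatment is, if anything, more explicit than the paper's about where the real work lies.
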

\begin{proof}
Since each of the maps $g^{(i)}$ is feasibly queryable they also satisfy the neighborhood preservation property (cf.\ \cref{defFeasibleQuery}).

Divide $I^d$ into $k=2^d$ cubes $C_j$, $j=1,\dots,k$ with the dyadic rational $a_i$ marking its center, $a_j \in I_i$, $i\geq k+1$.
On input $1^n$, $n \geq k+1$, we are to perform a $k$-ary search to decide whether there is an $s \in I^d_n$ such that $g^{(n)}(s) \in \ccD_{r}(A)$. This is straightforward: check the distances of all transformed $k$ cube centers $g^{(n)}(a_j)$ from $\ccD_{r}(A)$ (which is a polynomial time operation by \cref{lemEfficientDistCheck}). Choose the cube with center $g^{(n)}(a_j)$ closest to $\ccD_r(A)$, then recurse, dividing cube $C_j$ into $k$ more cubes: this can be correctly performed on account of the neighborhood preservation condition on the feasibly queryable maps $\{g^{(n)}\}$, which ensures that for for all $n \in \NN$, it holds that for every $x_i \in I^d$ and $\varepsilon = \sqrt{d}/2^n$, there is a $\delta>0$ and a $y_i \in X$ such that for all $x \in I_n^d$, $x \in \overline{B_\varepsilon(x_0)}$ if and only if $g^{(n)}(x) \in \overline{B_\delta(y_0)} \cup \{\perp\}$.

Each iteration the number of candidate solutions is multiplied by a factor of $1/k$, $k \geq 2$. As such, the algorithm halts in time polynomial in $n$ and the length of the description of $A$.
\end{proof}

We now prove \cref{propTransitivity}, showing one side of how transitivity is related to tractability.

\begin{proof}[Proof of \cref{propTransitivity}]
Let $(S^1, R_\alpha)$ be a rigid rotation of the circle by an irrational algebraic number $\alpha \in \AA$ (again, see \cite{basu2006algorithms} as a starting point for a reference on the efficient and effective manipulation of algebraic numbers). \cref{lemEfficientDiscretization} tells us the system is efficiently discretizable, and irrational rotations of the circle are topologically transitive. In particular, notice that the inverse $R_\alpha^{-1}$ also has an efficient discretization.

Let $\ccTELIC_{(S^1, R_\alpha)}(v, \{h^{(i)}\},\{B^{(i)}\})$ be any telic problem coming from the system. We treat $S^1$ as $\RR/\ZZ$, i.e.\ the unit interval $I$ with $1$ and $0$ identified. Hence, since the $g^{(i)}$ are feasibly queryable and thus neighborhood preserving, the dyadic rationals of $I_i$ at precision $i$ are carried into $\RR/\ZZ$ in a manner that preserves their order with respect to the usual ordering $\leq$. Moreover, note that since $h^{(i)} = \hat{H}_{r_i}^m \circ g^{(i)}$ and $\hat{H}^m|_I$ is a homeomorphism, $h^{(i)}$ also carries the dyadic rationals of $I_i$ into $I$ in a way that preserves their order (and are neighborhood preserving). 

One way to solve this telic problem is by effectively running a binary search, using \cref{lemEfficientDistCheck}. We proceed in another way: on input $1^n$, find the endpoints $x_0, x_1$ of the target set $B^{(n)}_{v(n)}$. Let
\[
x_0' = \ccM_{R_\alpha^{-1}}(x_0, 1^{n}, \ccV(1^n)) \in I_n \quad \text{and}\quad x_1' = \ccM_{R_\alpha^{-1}}(x_1, 1^{n}, \ccV(1^n)) \in I_n.
\]
Put
\[
\hat{B}^{(n)}_{v(n)} = [\hat{H}_{r_i}^{-m}(x_0'), \hat{H}_{r_i}^{-m}(x_1')].
\]
Using \cref{lemEfficientSetDecidability}, check if there is an $s \in I_n$ such that $g^{(n)}(s) \in \hat{B}^{(n)}_{v(n)}$. Respond in kind.
\end{proof}

%%%%%%%%%%%%%%%%%%%%%%%%%%%%%%%%%%%%%%%%%%%%%%%%%%%%%%%%%%%%%%%%%%%%%
\subsection{The relationship between tractability, transitivity, and entropy}\label{secTelicSensitivity}
%%%%%%%%%%%%%%%%%%%%%%%%%%%%%%%%%%%%%%%%%%%%%%%%%%%%%%%%%%%%%%%%%%%%%

The following theorem asserts that if a dynamical system admits a telic problem for which any decider runs in time $2^{\Omega(n)}$, then the system has positive topological entropy.

\begin{theorem}\label{thm:entropyHardness}
Let $(X, T)$ be an efficiently discretizable topological dynamical system. If $(X, T)$ admits a telic problem such that every decider for the language runs in time $2^{\Omega(n)}$, then the topological entropy of the system is positive.
\end{theorem}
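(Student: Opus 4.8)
The plan is to prove the contrapositive: assuming the topological entropy $h(T) = 0$, I will show that \emph{every} telic problem coming from $(X,T)$ can be decided by a Turing machine running in subexponential (in fact, I expect polynomial or at least $2^{o(n)}$) time, which contradicts the hypothesis that some telic problem requires every decider to run in time $2^{\Omega(n)}$. The key bridge is the Bowen--Dinaburg definition of entropy already recalled in the preliminaries: $h(T) = 0$ means that for every $\varepsilon > 0$, the maximal cardinality $N(n,\varepsilon)$ of an $(n,\varepsilon)$-separated set satisfies $\frac{1}{n}\log N(n,\varepsilon) \to 0$, i.e.\ $N(n,\varepsilon) = 2^{o(n)}$ for each fixed $\varepsilon$. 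The dynamical intuition is that with zero entropy, orbit segments of length $n$ cannot be distinguished at resolution $\varepsilon$ into exponentially many classes, so the $2^n$ candidate initial points $h^{(n)}(s)$, $s \in I_n^d$, collapse under $n$ iterations into only subexponentially many $\varepsilon$-distinguishable itineraries.

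First I would fix a telic problem $\ccTELIC_{(X,T)}(v,\{h^{(i)}\},\{B^{(i)}\})$ and set the measurement resolution $\varepsilon = 2^{-v(n)}$ (or a constant multiple thereof), which is the precision at which membership in $B^{(n)}_{v(n)}$ is tested according to \cref{propCorrectSim}. By that proposition, $1^n$ is a positive instance if and only if some point $y$ near an image point $h^{(n)}(s)$ has $\rho(T^n(y), B^{(n)}) \le 2^{-v(n)}$, so only the location of $T^n$-images up to resolution $\varepsilon$ matters. Next I would use the neighborhood preservation property of the feasibly queryable maps $g^{(i)}$ (and hence of $h^{(i)} = \hat H^{\ell}_{r_i}\circ g^{(i)}$, since $\hat H$ is a homeomorphism commuting with $T$): this guarantees that $h^{(n)}$ distorts $I_n^d$ into $X$ without tearing neighborhoods apart, so that nearby strings $s$ map to nearby points and a bounded-cost search over the image is geometrically faithful. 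The crux of the argument is then a covering/counting step: I would cover $X$ by a finite family of $\varepsilon$-balls whose centers form an $(n,\varepsilon)$-spanning set, invoke $h(T)=0$ to bound the number of distinct $(n,\varepsilon)$-itineraries by $N(n,\varepsilon) = 2^{o(n)}$, and conclude that the $2^n$ candidate orbit segments fall into at most $2^{o(n)}$ classes that are indistinguishable at resolution $\varepsilon$.

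The algorithmic payoff is that a decider need only test one representative per itinerary class rather than all $2^n$ strings. Concretely, I would organize the search so that the neighborhood preservation property lets us recursively subdivide $I_n^d$ (as in the $k$-ary search of \cref{lemEfficientSetDecidability}) and prune branches whose entire sub-cube maps, after $n$ iterations at precision $v(n)$, into a single $\varepsilon$-cell far from $B^{(n)}_{v(n)}$; the zero-entropy bound ensures the number of surviving branches, hence the number of simulations $T^n_{v(n)}(\cdot)$ performed, is $2^{o(n)}$. Since each simulation via $\ccM_T$ and each membership test via $\ccB$ is polynomial in $n$ (using \cref{lemEfficientDistCheck} for distance computations), the total runtime is $2^{o(n)}\cdot\poly(n) = 2^{o(n)}$, contradicting the $2^{\Omega(n)}$ lower bound.

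I expect the main obstacle to be the counting step that converts the topological statement $N(n,\varepsilon)=2^{o(n)}$ into a uniform, effective bound on the number of search branches the algorithm must explore. The subtlety is quantifier order: entropy sends $\varepsilon \to 0$ \emph{after} $n \to \infty$, whereas here $\varepsilon = 2^{-v(n)}$ shrinks \emph{with} $n$, so I must either show the telic problem's resolution $v(n)$ grows slowly enough relative to the entropy decay, or argue that for the fixed system the relevant separation count at the coupled scale is still subexponential. Making the interplay between the spatial resolution $\varepsilon = 2^{-v(n)}$, the neighborhood preservation parameters $\delta$, and the Bowen metric $\rho_n$ quantitatively compatible — so that $\varepsilon$-indistinguishable itineraries genuinely yield identical $\ccB$-answers and the branch count inherits the $2^{o(n)}$ bound — is where the real work lies; the remaining simulation and search bookkeeping is routine given \cref{propCorrectSim}, \cref{lemEfficientDistCheck}, and \cref{lemEfficientSetDecidability}.
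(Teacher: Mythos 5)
Your overall strategy --- prove the contrapositive by converting $h(T)=0$ into a subexponential decider via an itinerary-counting argument --- is the mirror image of the paper's proof, which runs in the opposite direction: it converts the $2^{\Omega(n)}$ runtime lower bound into a lower bound $N(n,\varepsilon_n)\geq 2^{\alpha' n}$ on the $(n,\varepsilon_n)$-separated count, by arguing that a decider forced to examine exponentially many distinct lattice points must be witnessing exponentially many \emph{adjacent} pairs in $\image(g^{(n)})$ whose orbit segments $\varepsilon_n$-separate in the Bowen metric $\rho_n$ (otherwise the neighborhood preservation property would let the decider skip one of the two points). The two directions would be interchangeable if your counting step held, but it does not, and the obstacle you flag in your last paragraph is fatal as written rather than a technicality to be smoothed over.

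The gap is precisely the quantifier order you identify. Zero entropy gives $N(n,\varepsilon)=2^{o(n)}$ only for each \emph{fixed} $\varepsilon>0$, whereas your algorithm must distinguish points at resolution $\varepsilon_n=2^{-v(n)}$ with $v(n)=Cn$. At that coupled scale the bound $N(n,\varepsilon_n)=2^{o(n)}$ is simply false for essentially every state space in the paper's setting: since $\rho_n\geq\rho$, already the purely spatial separation count satisfies $N(n,\varepsilon_n)\geq 2^{\Omega(n)}$ whenever $X$ contains a line segment, because one can place $2^{v(n)}$ points pairwise $2^{-v(n)}$ apart. Concretely, for the identity map or an irrational rotation (both zero entropy), all $2^n$ candidate points $h^{(n)}(s)$ are pairwise $\varepsilon_n$-separated at time $0$, so they fall into $2^n$ distinct ``$\varepsilon_n$-itinerary classes,'' your pruning rule never merges exponentially many branches, and the claimed $2^{o(n)}\cdot\poly(n)$ runtime does not follow from $h(T)=0$ alone. (Those particular systems do admit fast deciders, but via order-preservation and invertibility as in \cref{propTransitivity}, not via entropy-based itinerary counting.) To complete a contrapositive argument you would need a different mechanism --- for instance, controlling the number of distinct values of $\ccD_{v(n)}(T^n(h^{(n)}(s)))$ that can land near $B^{(n)}_{v(n)}$, or showing that under zero entropy adjacent lattice points (at distance $2^{-r_n}$, at or below the measurement scale) have $\rho_n$-close orbits so that the $k$-ary search of \cref{lemEfficientSetDecidability} prunes correctly --- which is exactly the dynamical, as opposed to spatial, separation that the paper's contradiction argument isolates.
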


To emphasize the meta-mathematical flavor of the result, \cref{thm:entropyHardness} essentially asserts that there is a relationship between the complexity of a dynamical system and the difficulty of \emph{finding} proofs or refutations of certain basic statements about a dynamical system (the basic statements taking form ``there is no point in $g^{(n)}(I^d_n)$ whose initial orbit segment develops into $B^{(n)}$").

One novel use case of the theorem is the application of computational techniques (e.g.\ reductions) in the proof that systems have positive entropy. That is, a corollary of \cref{thm:entropyHardness} is that if $(X, T)$ admits a telic problem whose deciders run in exponential time, and $(X, T) \leq_p (Y, S)$ for any efficiently discretizable system $(Y, S)$, then $h(S)>0$.

We organize telic problems through the following notion of \emph{containment}; this allows us to speak of ``typical" telic problems coming from a dynamical system.

\begin{definition}\label{defContainment}
Fix an efficiently discretizable dynamical system $(X, T)$. We shall say a subset $A\subseteq X$ \emph{contains} a telic problem
\[\ccTELIC_{(X, T)}(v,\{h^{(i)}\},\{B^{(i)}\})\]
if $h^{(i)}:\{0,1\}^i \rightarrow A_{m(i)}\cup\{\perp\}$, and $B^{(i)} \subseteq A_{m(i)}$ for all $i \in \NN$, where $m(i) = \min\{r_i, v(i)\}$.
Let $\Tel(A)_{(X, T)}$ denote the class of all telic problems contained in $A$.\footnote{Note $A$ need not be invariant under $T$, and in fact need not be computable: we are not fixing a set \emph{and then} defining a telic problem so that $g^{(i)}:I_i^d \rightarrow A$, and $B^{(i)} \subseteq A$.}
\end{definition}

\begin{remark}
We comment that ``small sets" can contain the telic problems if $B^{(i)} = \emptyset$ and $h^{(i)}(I_i^d) = \{\perp\}$ for all $i \leq N$ for sufficiently large $N$; these are the telic problems which are ``eventually nontrivial" and can be contained in sets of small volume.
\end{remark}

A subset $A$ of state space $X$ is \emph{pure} if every telic problem contained in $A$ can be solved in polynomial time, i.e.\ $\Tel(A)_{(X, T)} \subset \ccP$. If a set $A$ is not pure then it is \emph{impure} and $\Tel(A)_{(X, T)} \not\subset \ccP$.
This naming is designed to suggest that pure sets have a special property that any telic problem contained in them is solvable in polynomial time, while impure sets must contain a mix of easy and hard telic problems.

\begin{theorem}[Dichotomy]\label{thmTopDichotomy}
Let $X \subset \RR^d$ be compact and connected, and let $(X, T)$ be an efficiently discretizable and transitive topological dynamical system with inverse.
Then either $X$ is pure, or every pure set $A \subset X$ is contained in the complement of an open and dense subset of $X$.
\end{theorem}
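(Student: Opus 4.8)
The plan is to prove the contrapositive of the dichotomy: assuming $X$ is \emph{not} pure, I will show that every pure set $A \subset X$ is nowhere dense (contained in the complement of an open dense set). Since $X$ is not pure, there exists an impure telic problem $\ccTELIC_{(X,T)}(v,\{h^{(i)}\},\{B^{(i)}\})$ contained in $X$ that is not solvable in polynomial time. Now suppose, for contradiction, that some pure set $A$ is \emph{not} nowhere dense, i.e.\ its closure $\overline{A}$ contains an open set $U \neq \emptyset$. The strategy is to use transitivity together with the efficient discretizability with inverse to ``transport'' the hard telic problem into the open set $U \subseteq \overline{A}$, thereby producing a hard telic problem contained in $A$ (up to discretization), contradicting purity of $A$.

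\medskip

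\noindent\textbf{Key steps.} First I would fix the nonempty open set $U$ inside $\overline{A}$ and the impure problem living in $X$. Because $(X,T)$ is transitive, by the Birkhoff Transitivity Theorem (stated in the preliminaries), for any two nonempty open sets there is an iterate carrying one to meet the other; I would use this to find, for each target set $B^{(i)}$ and each block $\image(h^{(i)})$ of the hard problem, an integer $N_i$ and an open set inside $U$ whose $N_i$-th iterate lands near the relevant region. The aim is to define a \emph{new} telic problem whose functions $f^{(i)}$ map into $U$ (hence into $A_{m(i)}$) and whose target sets $A^{(i)}$ sit inside $A_{m(i)}$, such that positive instances of the new problem correspond exactly to positive instances of the original hard problem. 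Second, I would verify that the transport maps constructed from the dynamics remain feasibly queryable: this is where neighborhood preservation and the polynomial-time discretizability of $T$ and $T^{-1}$ (\cref{defDynDisc}, \cref{defFuncDisc2}) are essential, since the new $f^{(i)}$ are built by composing the original $h^{(i)}$ with discretized iterates of $T$ or $T^{-1}$, and I must ensure the number of iterations stays polynomially bounded in $i$ so the reduction is polynomial-time. Third, I would invoke \cref{propCorrectSim} to confirm that the discretized transported problem has the same yes/no answers as the original, so the new problem is genuinely hard. Since this hard problem is contained in $A$, this contradicts $A$ being pure, completing the contrapositive.

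\medskip

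\noindent\textbf{Main obstacle.} The crux is controlling the number of iterations $N_i$ required by the transitivity argument. Birkhoff transitivity only guarantees \emph{existence} of an $N_i$ with $T^{N_i}(U)\cap V \neq \emptyset$; it gives no quantitative bound, and a priori $N_i$ could grow faster than any polynomial in $i$, which would destroy the polynomial-time character of the reduction and hence fail to produce a hard telic problem in $A$. To resolve this I would exploit that the hard problem's instance size is the iteration count $n$ itself: rather than transporting by a fixed dynamical iterate, I would embed the hardness by constructing $f^{(n)}$ so that the relevant orbit segment of length $n$ already begins inside $U$, using the inverse map $T^{-1}$ (available since the system is efficiently discretizable with inverse) to pull the block $\image(h^{(n)})$ back into $U$ along the dynamics, and using connectedness of $X$ together with density of the transitive orbit to guarantee such a pullback meets $U$. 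The delicate point, which I expect to be the hardest to make rigorous, is simultaneously preserving the neighborhood preservation property of the transported maps while keeping everything computable in $\poly(n)$ time; this is precisely where the hypotheses that $X$ is compact and connected and that the conjugating/iterating maps are efficiently discretizable \emph{with inverse} do the real work.
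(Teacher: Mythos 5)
Your plan hinges on a transport step that does not go through, and you have in fact put your finger on exactly why: the transitivity argument gives an iterate $N_i$ with $T^{N_i}(U)\cap V\neq\emptyset$ but no bound whatsoever on $N_i$ in terms of $i$, so the composition of $h^{(i)}$ with $N_i$ discretized iterates of $T^{\pm1}$ need not be computable in $\poly(i)$ time, and the transported problem need not be a valid (feasibly queryable) telic problem, let alone a hard one. Your proposed repair --- pulling $\image(h^{(n)})$ back into $U$ with $T^{-1}$ and appealing to connectedness and density of a transitive orbit --- has exactly the same defect: the number of backward iterates needed to bring an entire $2^{dn}$-point block (together with the target sets $B^{(n)}$) inside $U$ is again an unquantified transitivity constant, and nothing forces it to be polynomial in $n$. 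There is also a secondary slip: an open set $U\subseteq\overline{A}$ need not sit inside $A$, and containment of a telic problem in $A$ (\cref{defContainment}) is a statement about the discretizations $A_{m(i)}$, so landing the transported problem in $U$ does not by itself contradict purity of $A$.

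The paper avoids the transit-time problem entirely by reversing the quantifiers: instead of pushing the hard problem to a prescribed open set, it starts from \cref{lemHT->Open}, which upgrades any hard telic problem to one already contained in some open set $U$, and then forms $D=\bigcup_{n\in\ZZ}T^n(U)$. Each $T^l(U)$ is impure by \cref{lemUnion}, whose reduction composes with $T^{\pm l}$ for a \emph{fixed} $l$ independent of the instance size, so it is trivially polynomial-time; transitivity is invoked only for the purely topological fact that $D$ is dense, and never inside a computational reduction. That is the structural move your argument is missing: keep the dynamics out of the reduction except for boundedly many iterates, and let transitivity do only topological work.
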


We prove \cref{thm:entropyHardness} and \cref{thmTopDichotomy} in the following subsections.

%%%%%%%%%%%%%%%%%%%%%%%%%%%%%%%%%%%%%%%%%%%%%%%%%%%%%%%%%%%%%%%%%%%%%%
\subsection{Proof of \cref{thm:entropyHardness}}
%%%%%%%%%%%%%%%%%%%%%%%%%%%%%%%%%%%%%%%%%%%%%%%%%%%%%%%%%%%%%%%%%%%%%%

The theorem can be proved via a variety of rather elementary arguments; we show the theorem by lower-bounding the size of $(n,\varepsilon)$-separated sets. Logarithms are taken to be base 2.

\begin{proof}[Proof of  \cref{thm:entropyHardness}]
Let $X\subset \RR^\ell$ be compact metric space and $T:X\rightarrow X$ continuous with the system $(X,T)$ efficiently discretizable. Suppose for a contradiction that there is a telic problem
\[
\ccTELIC_{(X, T)}\left(v,\{g^{(i)}\},\{B^{(i)}\}\right)
\]
whose solvers all halt in time $2^{\Omega(n)}$, while $h(T)\coloneqq h_{\text{top}}(T) = 0$.

For the purpose of this proof we assume $v(n)=n$, since $v(n) = Cn$ for some constant $C$, and all constants are absorbed in the asymptotic arguments used here.

Let $Y_n \coloneqq \{g^{(n)}(s): s \in I^d_n, g^{(n)}(s) \neq \perp\} \subseteq X_n$. Put $x_s = g^{(n)}(s)\in Y_n$.
Let $\rho_n(x, y) \coloneqq \max_{0\leq j < n} \rho(T^j(x), T^j(y))$ be the usual Bowen metric. For $\varepsilon>0$ let $N(n,\varepsilon)$ be the maximal size of an $(n, \varepsilon)$-separated subset of $X$. Recall the topological entropy of $(X, T)$ is defined as
\[
h(T) = \lim_{\varepsilon\rightarrow 0}\left(\limsup_{n\rightarrow \infty}\frac{1}{n} \log N(n,\varepsilon)\right) \geq 0.
\]
Let
\[
\Phi_n(x) = \ccB(\ccM_T(\langle x\rangle, 1^n, 1^n)),
\]
for $x \in X_{r_i}$, noting $\ccM_T$ returns values at the working resolution $v(n)=n$, contained in $X_n$. Hence, for $\varepsilon = 1/2^{n+1}$, there is an $n_0$ such that for all $n \geq n_0$, $\rho_n(x, y) \leq \varepsilon$ implies $\Phi_n(x) = \Phi_n(y)$.

By hypothesis, \emph{any} machine deciding the telic problem runs in time at least $2^{\alpha n}$ for some $\alpha > 0$ and sufficiently large $n$. Furthermore, the sequence of maps $\{g^{(n)}\}_{n=1}^\infty$ is feasibly queryable, so, using the neighborhood preservation property and \cref{lemEfficientSetDecidability}, two points $s, s' \in I_n^d$ are adjacent (neighboring in the lattice $I_n^d$) iff $x_s, x_{s'}$ are adjacent in $X$.
In addition, checking whether $\Phi_n(x_s) = 1$ takes time $\poly(n)$.

Using these observations, the $2^{\alpha n}$ runtime of any decider implies $2^{\alpha n}/\poly(n) \geq 2^{\alpha'n}$ checks of distinct points $s \in I^d_n$, where $\alpha'>0$. This implies that at least $2^{\alpha' n}$ adjacent points $x_s, x_{s'} \in Y_n$ satisfy $\rho_n(T(x_s), T(x_{s'}))\geq \varepsilon$. For otherwise, the fact that $\rho_n(x, y) \leq \varepsilon$ implies $\Phi_n(x) = \Phi_n(y)$, and $s, s'$ adjacent implies $x_s, x_{s'}$ adjacent (by the neighborhood preservation property), so this property can be used to avoid checking both $x_s$ and $x_{s'}$. This gives the lower bound: $2^{\alpha' n} \leq N(n, \varepsilon_n)$ for some $\alpha'>0$ and $\varepsilon_n = 1/2^{n+1}$.

Letting
\[
h(\varepsilon)=\limsup_{n\rightarrow \infty}\frac{1}{n} \log N(n,\varepsilon),
\]
and $h_{\Top}(T) = \lim_{\varepsilon\rightarrow 0}h(\varepsilon)$, notice for fixed $k \in \NN$, $\varepsilon_k = 2^{-k}$,
\[
h(\varepsilon_k)=\limsup_{n\rightarrow \infty}\frac{1}{n} \log N(n,\varepsilon_k) \geq \frac{1}{k}  N(k, \varepsilon_k).
\]
Using the lower bound $2^{\alpha' k} \leq N(k, \varepsilon_k)$,
\[
h(\varepsilon_k)\geq \frac{1}{k}\log(2^{\alpha' k}) = \alpha' \log 2.
\]
This holds for every $k$, so along the sequence $\varepsilon_k \downarrow 0$,
\[
h_{\text{top}}(T) \geq \limsup_{k\rightarrow\infty} h(\varepsilon_k)\geq \alpha' \log 2 > 0.
\]
\end{proof}

%%%%%%%%%%%%%%%%%%%%%%%%%%%%%%%%%%%%%%%%%%%%%%%%%%%%%%%%%%%%%%%%%%%%%%
\subsection{Proof of \cref{thmTopDichotomy}}
%%%%%%%%%%%%%%%%%%%%%%%%%%%%%%%%%%%%%%%%%%%%%%%%%%%%%%%%%%%%%%%%%%%%%%
We begin with some lemmas.

\begin{lemma}\label{lemHT->Open}
Let $(X, T)$ be an efficiently discretizable topological dynamical system with inverse. If $(X, T)$ admits a hard telic problem, it must admit a hard telic problem that cannot be contained in a set with empty interior.
\end{lemma}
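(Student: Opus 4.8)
The plan is to begin from an arbitrary hard telic problem and \emph{augment} it with a dense cloud of dynamically inert starting points, forcing its footprint---the closure of all starting points together with all targets---to have nonempty interior, while leaving the accepted language (hence the hardness) untouched. Throughout I read ``contained in a set with empty interior'' as containment, in the sense of \cref{defContainment}, in a \emph{closed} nowhere-dense set $A$; this is the only reading consistent with \cref{thmTopDichotomy}, since a dense set $A$ has $A_r = X_r$ for every $r$ and would vacuously contain every telic problem. The goal is therefore to produce a hard telic problem whose footprint is dense in some relatively open ball, which no closed nowhere-dense set can contain.

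Fix a hard telic problem $\ccTELIC_{(X, T)}(v,\{g^{(i)}\},\{B^{(i)}\})$ and a relatively open ball $U = X\cap \overline{B_\rho(p_0)}$, which has nonempty interior in $X$. Using the iteration-count generalization noted immediately after \cref{deftelic} (where it is recorded that all results transfer to telic problems equipped with a polynomial-time machine $\ccK$ controlling the number of iterations), I would define a new telic problem $P'$ by splitting on the parity of the instance. On even instances $1^{2n}$ set the iteration count to $\ccK(1^{2n}) = 1^{n}$, keep the operating precision, targets, and starting maps of the original problem (precomposing $g^{(n)}$ with truncation $I_{2n}^d \to I_n^d$ so that the domain matches), so that $1^{2n}\in P' \iff 1^{n}\in P$. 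On odd instances $1^{2n+1}$ set $\ccK(1^{2n+1}) = 1^{0}$ (zero iterations), let the starting map enumerate the dyadic lattice of $U$ at spacing $1/2^{2n+1}$, and take the target to be a fixed rational box $V \subseteq U$. Since $T^{0} = \Id$, an odd instance accepts precisely when some filler point lies in $V$, which always holds; thus the odd instances are trivially decidable, while their starting maps densely exhaust $U$.

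Three verifications remain, all routine. Validity: the even-instance map is a truncation followed by the feasibly queryable $g^{(n)}$, and the odd-instance map is affine enumeration of a lattice, hence feasibly queryable, and the zero-iteration target test is polynomial time; so $P'$ is a legitimate telic problem. Hardness: the map $1^{n}\mapsto 1^{2n}$ is a Karp reduction of $P$ into $P'$, so $P' \notin \ccP$. Fat footprint: the odd-instance filler points fill $U$ at spacing $1/2^{2n+1}\to 0$, so $\image(g'^{(2n+1)})$ becomes dense in $U$ as $n\to\infty$. If a closed nowhere-dense $A$ contained $P'$, then every odd-instance filler point at resolution $m=2n+1$ would lie in $A_{m(m)}$, hence within $1/2^{m(m)}$ of $A$, where $m(m) = \min\{r_m, v(m)\}\to\infty$; density in $U$ together with closedness of $A$ would force $U \subseteq A$, contradicting that $A$ has empty interior. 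Hence $P'$ cannot be contained in any set with empty interior.

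The main obstacle is precisely the tension that a footprint is fattened only by starting points that genuinely lie in $X$ (points routed to $\perp$ do not enlarge $\image(g'^{(i)})$), yet to preserve the language these extra points must never reach the target. The clean resolution above \emph{decouples} the fattening from the dynamics by isolating it on its own instances carrying zero iterations, so the filler interacts with no nontrivial orbit. If one instead insists on the standard convention (iterations equal to instance length), the natural overlay construction shares a single target with the filler, and one must argue that the surviving (target-avoiding) filler points are still dense in some ball; this uses \emph{efficient discretizability with inverse} to delete target-reaching candidates in polynomial time at the correct resolution, together with the observation that a hard problem cannot eventually trap every point of $X$ inside its shrinking targets (else it would accept all large $n$ and be trivial), which is what guarantees a ball $U$ on which density of the survivors persists. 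Either route yields the desired hard telic problem with nonempty-interior footprint.
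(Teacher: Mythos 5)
Your route is genuinely different from the paper's, and in its present form it has gaps. The paper's proof works entirely on the \emph{target} side of \cref{defContainment}: since containment requires $B^{(i)}\subseteq A_{m(i)}$ and $A_{m(i)}$ is a discrete set, a containable telic problem must have degenerate targets (a point or empty); one then replaces each $B^{(i)}$ by a small open ball $U^{(i)}\supset B^{(i)}$ with $\ccD_{v(i)}(U^{(i)})=\ccD_{v(i)}(B^{(i)})$, so the decided language is literally unchanged (hence still hard) while the new targets, having nonempty interior, cannot sit inside any discrete $A_{m(i)}$. (The paper also gives a second, slightly fatter construction whose hardness is preserved by a short Turing reduction.) You instead fatten the \emph{image} side, $h^{(i)}:\{0,1\}^i\rightarrow A_{m(i)}\cup\{\perp\}$, by adjoining a dense cloud of inert starting points; this is a legitimate alternative target, but it is the harder half of the definition to attack and it forces you to reinterpret the statement as containment in a \emph{closed} nowhere-dense set, which the paper does not need to do.

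The concrete gaps. First, your primary construction lives outside the paper's definition of a telic problem: the zero-iteration odd instances require the machine $\ccK$ from the remark following \cref{deftelic}, a generalization the paper explicitly declines to use in this paper; the lemma must produce a telic problem in the sense of \cref{deftelic} because it is consumed by the dichotomy theorem. Second, the even-instance padding $I_{2n}^d\rightarrow I_n^d$ by truncation does not satisfy the neighborhood preservation property of \cref{defFeasibleQuery}: at scale $\varepsilon=\sqrt{d}/2^{2n}$ there exist adjacent points of $I_{2n}^d$, one inside and one outside $\overline{B_\varepsilon(x_0)}$, that truncate to the same point and hence have identical images, violating the required biconditional; so $P'$ is not a valid telic problem even granting $\ccK$. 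Third, your fallback ``overlay'' route hinges on the claim that the target-avoiding filler points remain dense in some fixed ball $U$ for all large $n$; the observation that a hard problem cannot accept every large instance does not deliver this, since the discretized preimages $T^{-n}(B^{(n)})$ may be spread throughout $U$ (this is precisely the behaviour of the mixing systems the lemma is most interesting for), and no argument is given that the survivors stay dense. By contrast, the paper's observation that containment already forces the $B^{(i)}$ to be singletons or empty reduces the whole lemma to a one-line fattening of the targets; I would recommend restructuring your argument around that side of the containment definition.
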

\begin{proof}
Suppose $(X, T)$ admits a hard telic problem $\ccT=\ccTELIC_{(X, T)}\left(v,\{h^{(i)}\},\{B^{(i)}\}\right)$ that is contained in a set with nonempty interior. Then for all $i \in \NN$, $B^{(i)} = \emptyset$ or $B^{(i)} = b$ for some point $b \in X$ (not necessarily in $X_{v(i)}$). We first note that $B^{(i)}$ can be fattened to a small open ball $U^{(i)} \supset B^{(i)}$ such that $\ccD_{v(i)}(U^{(i)}) = \ccD_{v(i)}(B^{(i)})$. This would prove the theorem according to our definitions.

Below we give a more enlightening construction that makes use of the assumption that $T$ is efficiently discretizable with inverse.

Use $\ccT$ to define a new telic problem
\[
\ccT'=\ccTELIC_{(X, T)}(v,\{h^{(i)}\},\{A^{(i)}\}),
\]
equal to $\ccT$ for all parameters except the target sets, where we take, for every $i \in \NN$, $A^{(i)} \supset B^{(i)}$ to be the smallest open ball containing $B^{(i)}$ such that $\ccD_{v(i)}(A^{(i)})$ contains $p$ of the $2d$ points of $X_{v(i)}$ closest to $\ccD_{v(i)}(B^{(i)})$, $1 \leq p \leq 2d$, recalling $g^{(i)}:I^d_i\rightarrow X_{r_i} \cup \{\perp\}$.

We claim $\ccT'$ cannot be decided in polynomial time if $\ccT$ is hard: consider the following simple reduction from $\ccT$ to $\ccT'$. Given an instance $1^n$ of $\ccT$, solve $\ccT'$ for the same instance. If there is no solution, return $0$. Otherwise, check if the unique certificate $s \in I_n^d$ is such that $T^{n}_{v(n)}(h^{(n)}(s)) = \ccD_{v(n)}(B^{(n)})$ (the certificate is unique since $T$ is bijective). If not, redefine $\ccT$ so that $h^{(n)}(s) = \perp$, and solve the problem again. This repeats at most $p$ times, and returns the correct solution to $\ccT$.
\end{proof}

Given a class of telic problems $\Tel(A)_{(X, T)}$, it is natural to ask what the complexity of the \emph{hardest} telic problems contained in $\Tel(A)_{(X, T)}$ is.
Let $\sim$ denote the equivalence relation on the power set $\cP(X)$ of $X$, defined so that $A \sim B$ if and only if both $A$ and $B$ contain only easy telic problems, or each contain a hard telic problem.\footnote{Again, we are not concerned with the computability of the sets here.}  This induces a natural preorder on the sets, based on whether or not they contain a hard telic problem: say $A \preceq B$ if $B$ contains a hard telic problem.
Let $[A]$ label the equivalence class of $A \subseteq X$.
The preorder is upgraded to a partial order, so write $[A]<[B]$ if $B$ contains a hard telic problem but $A$ does not.

\begin{lemma}\label{lemUnion}
Let $(X, T)$ be an efficiently discretizable topological dynamical system with inverse. For every $A \subseteq X$, $[A] = [T^l(A)]$ for all $l \in \NN$.
\end{lemma}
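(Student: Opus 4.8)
The plan is to show $[A] = [T^l(A)]$ by establishing that $A$ contains a hard telic problem if and only if $T^l(A)$ does, for each fixed $l \in \NN$. Since $T$ is efficiently discretizable with inverse, $T^l$ and $T^{-l}$ are both efficiently discretizable homeomorphisms, and this invertibility is what lets us translate telic problems contained in one set into telic problems contained in its image under $T^l$ without changing hardness. The key observation is that $T^l$ acts as the kind of conjugacy-type map already exploited in \cref{thmConjugacyHardness} and in \cref{deftelic} (where the map $H$ commuting with $T$ is used precisely to shift target sets and source images). Indeed, $T^l$ commutes with $T$ trivially, so it plays exactly the role of $\hat H^\ell$ in the definition of telic problems.

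The main step is the construction. Suppose $A$ contains a hard telic problem $\ccTELIC_{(X, T)}(v,\{h^{(i)}\},\{B^{(i)}\})$, so that $h^{(i)}:\{0,1\}^i \to A_{m(i)}\cup\{\perp\}$ and $B^{(i)} \subseteq A_{m(i)}$ for all $i$. First I would define a new telic problem contained in $T^l(A)$ by postcomposing the source maps and the target sets with $T^l$: set $\tilde h^{(i)} = \hat{(T^l)}_{r_i}\circ h^{(i)}$ (extending $T^l$ to carry $\perp$ to itself) and $\tilde B^{(i)} = T^l(B^{(i)})$. Because $T^l$ is an efficiently discretizable homeomorphism, the maps $\tilde h^{(i)}$ remain feasibly queryable (neighborhood preservation is preserved under an efficiently discretizable homeomorphism, exactly as argued in the proof of \cref{thmConjugacyHardness}), and $\tilde B^{(i)} = T^l(B^{(i)}) \subseteq T^l(A)_{m(i)}$, so the new problem is genuinely contained in $T^l(A)$. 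Using that $T^l$ commutes with $T$, one gets the equivalence $T^{n}_{v(n)}(h^{(n)}(s)) \in B^{(n)}_{v(n)}$ if and only if $T^{n}_{v(n)}(\tilde h^{(n)}(s)) \in \tilde B^{(n)}_{v(n)}$ for every $n$, so the two problems are the same unary language. Hence one is hard if and only if the other is, which gives $[A] \preceq [T^l(A)]$ and, by the analogous construction using $T^{-l}$, the reverse inequality.

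The one genuinely technical point I would verify carefully is the discretization bookkeeping: the target set $B^{(i)}$ is, per \cref{deftelic}, of the form $\hat H^\ell(R^{(i)})\cap X$ for a rational rectangle $R^{(i)}$, and I need $T^l(B^{(i)})$ to again admit a polynomial-time membership tester $\ccB'$ at the appropriate resolution. This follows because deciding membership of $x$ in $\ccD_{v(i)}(T^l(B^{(i)}))$ reduces, via the efficiently computable $(T^l)^{-1}_{r_i} = T^{-l}_{r_i}$, to deciding membership of $T^{-l}_{r_i}(x)$ in $\ccD_{v(i)}(B^{(i)})$ — the same trick used just after \cref{deftelic} to build $\ccB$ for $H$-shifted targets. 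I expect this resolution-matching (ensuring $r_i$, $v(i)$, and $m(i)$ line up so that the discretized commutation $T^l_r \circ T_r = T_r \circ T^l_r$ holds exactly, as guaranteed by \cref{defDynDisc} when $X$ contains its $r$-discretizations) to be the main obstacle, though it is more bookkeeping than conceptual difficulty, and it parallels the already-established argument of \cref{thmConjugacyHardness} closely enough that no new ideas are needed.
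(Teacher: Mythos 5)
Your proposal is correct and takes essentially the same approach as the paper: the paper's proof pulls a telic problem contained in $T^l(A)$ back to one contained in $A$ by precomposing with $T^{-l}_{r_i}$ and replacing targets by $T^{-l}(B^{(i)})$, which is the mirror image of your push-forward construction via $T^l$, and both rest on the same facts (that $T^l$ commutes with $T$, that $l$ is fixed, and that $T$ is efficiently discretizable with inverse). Your extra care about the membership tester for the transformed targets and the resolution bookkeeping is consistent with, and slightly more explicit than, what the paper records.
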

\begin{proof}
We prove $[A] \geq [T^l(A)]$, with the case $[A] \leq [T^l(A)]$ following by identical argument since $T$ is an efficiently discretizable map with inverse.

Fix a telic problem 
\[
\ccT_1=\ccTELIC_{(X, T)}(v,\{g^{(i)}\},\{B^{(i)}\})
\]
contained in $\Tel(T^l(A))_{(X, T)}$.
Note that since $T$ is a bijection, $\ccT_1$ is in correspondence with another telic problem 
\[
\ccT_2 = \ccTELIC_{(X, T)}(v,\{T_{r_i}^{-l} \circ g^{(i)}\},\{T^{-l}(B^{(i)})\}) \in \Tel(A)_{(X, T)},
\]
noting $T$ commutes with itself, so $\{T^{-l}\circ g^{(i)}\}$ and $\{T^{-l}(B^{(i)})\}$ form valid function and target set sequences.

Consequently, any instance of $\ccT_1$ can be transformed into an instance of $\ccT_2$ by simply taking a $l$-fold composition of $T^{-1}$ over target sets $B^{(i)}$ and the functions $h^{(i)}$. And since $l$ is fixed, and $T^{-1}$ is efficiently discretizable and bijective, it follows that $\Tel(A)_{(X, T)} \subseteq \textsf{P}$ if and only if $\Tel(T^l(A))_{(X, T)} \subseteq \textsf{P}$.
\end{proof}

We now prove the dichotomy theorem \cref{thmTopDichotomy}.

\begin{proof}[Proof of \cref{thmTopDichotomy}]
Suppose $(X, T)$ admits a hard telic problem. By \cref{lemUnion} if $U \subset X$ is impure, so is $T(U)$. And $U \cup T(U)$ is impure as well.
In addition, by \cref{lemHT->Open} there is then a hard telic problem that cannot be contained in a null set, or eventually contained in a null set. Let $U$ be an open set containing such a hard telic problem. Define
\[
D \coloneqq \bigcup_{n \in \ZZ} T^n(U).
\]
Each $T^n(U)$ is open so $D$ is open, and in particular $D$ is impure. To see that $D$ is dense, let $V \subset X$ be nonempty and open. By topological transitivity there is an $n\geq 0$ such that $T^n(U) \cap V \neq \emptyset$. But $T^n(U) \subset D$, so $V \cap D \neq \emptyset$. So $D$ is an impure open and dense subset of $X$.
\end{proof}

%%%%%%%%%%%%%%%%%%%%%%%%%%%%%%%%%%%%%%%%%%%%%%%%%%%%%%%%%%%%%%%%%%%%%%%
%%%%%%%%%%%%%%%%%%%%%%%%%%%%%%%%%%%%%%%%%%%%%%%%%%%%%%%%%%%%%%%%%%%%%%%
\section{Pseudorandom dynamical systems \& further development}\label{secProblems}
%%%%%%%%%%%%%%%%%%%%%%%%%%%%%%%%%%%%%%%%%%%%%%%%%%%%%%%%%%%%%%%%%%%%%%%
%%%%%%%%%%%%%%%%%%%%%%%%%%%%%%%%%%%%%%%%%%%%%%%%%%%%%%%%%%%%%%%%%%%%%%%

We now gather conjectures and problems whose resolution would greatly impact the development of the connection between dynamics and the theory of computation.

%%%%%%%%%%%%%%%
\subsection{Pseudorandom dynamical systems}

One of the more interesting questions this paper points toward is whether there exist \emph{pseudorandom dynamical systems}. Recall from the introduction that we say an efficiently discretizable dynamical system is \emph{pseudorandom} if it admits an intractable telic problem.

The source of interest stems from the fact that if a dynamical system is pseudorandom it must possess an invariant subset with a ``stable" form of stochasticity: taking a discretization of the system does not force the system to act in a way that is distinguishable from random for all time. Instead, for short time horizons the dynamics can be used to obtain sequences of bits \emph{computationally indistinguishable} from random by using the intractable telic problem to construct a one-way function or pseudorandom generator (supposing certain average-case complexity conditions are met, cf.\ \cref{conjectureCrypto} below).
This immediately brings to mind issues of theoretical cryptography, of which we recall some basic notions before stating a conjecture.

A probabilistic polynomial time Turing machine (\ccPPT) is a Turing machine with access to a stream of random bits (i.e.\, colloquially, the Turing machine ``can flip coins"). A function $\mu(\cdot)$ is said to be \emph{negligible} if for every polynomial $p(\cdot)$ there exists some $n_0$ such that for all $n > n_0$, $\mu(n) \leq 1/p(n)$.
Let $f:\{0,1\}^*\rightarrow \{0,1\}^*$ be a polynomial-time computable function. $f$ is said to be a \emph{one-way function} if for every $\ccPPT$ algorithm $A$, there exists a negligible function $\mu$ such that for all $n \in \NN$,
\[
\PP[x \leftarrow \{0,1\}^n;y = f(x): A(1^n, y) \in f^{-1}(f(x))] \leq \mu(n).
\]
We provide one more related definition. A \emph{distributional problem} is a pair $(L, \cD)$ where $L \subseteq\{0,1\}^*$, and $\cD$ is a $\ccPPT$.
We say a distributional problem $(L, \cD)$ is \emph{almost-everywhere $\delta$ hard-on-average} if for all $\ccPPT$ $A$ and infinitely many $n \in \NN$,
\[
\PP[x \leftarrow \cD(1^n):  A(1^n, x) = L(x)] \leq 1-\delta.
\]
The Dichotomy Theorem and its corollaries essentially state that if a dynamical system admits hard telic problems, then those problems are in great abundance.
The commonality of such hard telic problems (if they exist) naturally leads to the consideration of whether this would imply the existence of one-way functions, the existence of which is necessarily closely related to the existence of hard on the average distributional problems. The following conjecture deals with this question.

Recall a topological dynamical system system $(X, T)$ is said to be \emph{topologically exact} if, for every nonempty open set $U \subset X$, there exists an $n_0 \in \mathbb{N}$ such that $T^{n_0}(U) = X$.

\begin{conjecture}\label{conjectureCrypto}
Let $(X, T)$ denote an efficiently discretizable topological dynamical system.
If $T$ is topologically exact and the system admits a hard telic problem, then there exists an almost-everywhere $\delta$ hard-on-average telic problem coming from the dynamical system.
\end{conjecture}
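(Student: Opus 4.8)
The plan is to prove the conjecture through a \emph{worst-case to average-case reduction}, with topological exactness supplying the random self-reducibility that such reductions require. First I would promote the given worst-case hard telic problem $\mathcal{T}=\ccTELIC_{(X,T)}(v,\{h^{(i)}\},\{B^{(i)}\})$ to a distributional problem $(L,\mathcal{D})$ in which the target, rather than being fixed, is drawn at random: on input $1^n$ the sampler $\mathcal{D}$ selects a center $c$ from $X_{v(n)}$ (exploiting that efficient discretizability makes $X_{v(n)}$ efficiently recognizable, so that $\mathcal{D}$ can sample from it), and outputs the closed ball $B_c$ of radius $2^{-v(n)}$ about $c$; the language $L$ collects the instances $\langle 1^n,c\rangle$ for which there is an $s\in I_n^d$ with $T_{v(n)}^{n}(h^{(n)}(s))\in\ccD_{v(n)}(B_c)$. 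The free parameters --- the number of admissible seeds in $\image(g^{(n)})$ and the target radius --- would be tuned so that under $\mathcal{D}(1^n)$ the probability of a YES-instance lies inside $[\delta',1-\delta']$ for a fixed $\delta'>0$; this balance is necessary, since otherwise the constant ``answer with the majority'' predictor would defeat any claim of average-case hardness.

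The key structural input is an \emph{effective, uniform} form of topological exactness, which I would establish first. Since exactness gives $X=T^{n_0}(U)\subseteq T(X)$, the map $T$ is onto, so $T^n(U)=X$ forces $T^{n+1}(U)=T(X)=X$, and iterate counts may only be increased. Covering the compact space $X$ by finitely many balls of radius $2^{-k}$ and taking the maximum of the associated exactness times yields, for each scale $k$, a single $M(k)$ with $T^{M(k)}(\bar B)=X$ for \emph{every} closed ball $\bar B$ of radius $2^{-k}$. Compatibility of this uniform statement with the discretization $\mathcal{M}_T$ --- that $M(v(n))$ is polynomially bounded and that the induced routing between scales is polynomial-time computable --- is where the hypotheses of \cref{defDynDisc} must be leveraged carefully.

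The heart of the argument, and the step I expect to be the principal obstacle, is the random self-reduction: deciding the \emph{fixed} worst-case target $B^{(n)}$ by combining the answers returned by an assumed average-case solver on \emph{random} targets $B_{c_1},\dots,B_{c_k}$, each marginally distributed as $\mathcal{D}(1^n)$. This is delicate for two reasons intrinsic to the setting. First, topologically exact maps are necessarily non-invertible, so the clean ``transport the target along the orbit, then transport the answer back'' strategy available in \cref{thmConjugacyHardness} is unavailable: applying $T^{M(v(n))}$ spreads a small target over all of $X$ and irretrievably loses the location of an individual orbit's endpoint. Second, the predicate decided by a telic problem is an \emph{existence} (disjunctive) predicate, which lacks the algebraic, low-degree structure that powers the classical random self-reductions of problems such as the permanent. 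My plan is to supply the missing structure from the dynamics: a topologically exact system of the regularity typical in practice carries an absolutely continuous invariant measure with respect to which $T$ is exact in the measure-theoretic sense, hence exponentially mixing, so that $T^n_*$ drives the empirical distribution of the seed endpoints toward equilibrium. I would use this mixing both to justify the balance of $\mathcal{D}$ quantitatively and to argue that a random target is statistically indistinguishable from a dynamically transported copy of $B^{(n)}$, so that, assuming toward contradiction a PPT $A$ correct with probability exceeding $1-\delta$, a union bound over $k$ queries (with $k\delta$ small) avoids the erroneous inputs and recovers $L(B^{(n)})$ with high probability --- contradicting the worst-case hardness of $\mathcal{T}$.

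Finally I would close the loop on the model of computation. Because telic problems are unary (\cref{remTally}), the probabilistic polynomial-time worst-case solver produced by the reduction places $\mathcal{T}$ in a randomized class against which its assumed hardness must be formulated; I would state the hardness hypothesis and the conclusion uniformly, ruling out nonuniform advice, so the contradiction is genuine, and amplify the solver's success probability by independent repetition of the self-reduction before taking the union bound. The entire argument hinges on the random self-reduction of the third paragraph, which I regard as the crux and the reason the statement is advanced as a conjecture rather than a theorem.
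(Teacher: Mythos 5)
This statement is \cref{conjectureCrypto}, which the paper explicitly leaves open; there is no proof in the paper to compare against, so the only question is whether your proposal actually closes it. It does not, and you say as much yourself: the entire argument funnels into the random self-reduction of your third paragraph, which you do not carry out and which you correctly identify as the crux. A plan whose central step is ``supply the missing structure somehow'' is not a proof, and here the missing step is precisely the hard part --- worst-case to average-case reductions for existential predicates without algebraic (e.g.\ low-degree polynomial) structure are a well-known barrier in complexity theory, and nothing in the hypotheses of the conjecture manufactures that structure.

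Beyond the acknowledged gap, several concrete steps would fail as written. First, the object $(L,\cD)$ you construct, with instances $\langle 1^n,c\rangle$ ranging over target centers, is not a telic problem in the sense of \cref{deftelic}: telic problems are unary languages in which the target family $\{B^{(i)}\}$ is fixed by the problem specification via $\ccB$, so the conjecture requires you to realize the average-case hardness \emph{within} the parameters $(\ccV,\ccH,\ccB)$, not by enlarging the input alphabet. Second, you import hypotheses the conjecture does not grant: topological exactness does not imply the existence of an absolutely continuous invariant measure, let alone exponential mixing, so the quantitative ``statistical indistinguishability'' of a random target from a transported copy of $B^{(n)}$ has no justification; and there is no reason the uniform exactness time $M(v(n))$ obtained from compactness is polynomially bounded in $n$, which your routing between scales silently assumes. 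Third, as you note, exact maps are non-invertible, so the transport trick of \cref{thmConjugacyHardness} is unavailable --- but you replace it with nothing concrete. The honest conclusion is that your proposal is a reasonable research program for attacking the conjecture, with the genuinely hard steps left open, rather than a proof.
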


This conjecture acts as a precursor to showing one-way functions can be constructed from particular dynamical systems admitting hard telic problems (see also \cite{everett2024use} for related discussion).

%%%%%%%%%%%%%%%
\subsection{Additional paths for further development}

We begin with the following

\begin{problem}
Suppose it holds that whenever $(X, T)$ has positive entropy and $(X, T) \leq_p (Y, S)$, then $(Y, S)$ also has positive entropy. Under this assumption can it be shown that the Chirikov standard map has positive entropy for certain parameter values?
\end{problem}

Difficulties in studying the confluence of finite computation and dynamical systems with infinite state-spaces quickly emerge when one instead asks what conclusions can be drawn about the original system if the optimal decider for any telic problem arising from the system runs in merely \emph{super-polynomial time} rather than exponential time. Equivalently, it is hard classifying systems whose telic problems are contained in $\ccP$; it is easy to show for systems that are non-expansive for instance, but for more complicated systems (such as equicontinuous systems) the complexity remains extremely subtle. We leave some conjectures to this end.

\begin{conjecture}
If $(X, T)$ is an efficiently discretizable Morse-Smale system, then every telic problem coming from the system is contained in $\ccP$.
\end{conjecture}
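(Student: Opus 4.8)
The plan is to reduce each telic problem to an approximate set-membership search solvable by the $k$-ary subdivision of \cref{lemEfficientSetDecidability}, and to control the geometry of the relevant target using the rigid structure of Morse--Smale dynamics. By \cref{propCorrectSim}, deciding an instance $1^n$ amounts to deciding whether $\image(h^{(n)})$ meets the preimage $T^{-n}(B^{(n)})$; since $(X,T)$ is efficiently discretizable with inverse, this preimage can be probed in $\poly(n)$ time per query via the discretized inverse $T_r^{-1}$. One dimension is immediate: when $X \subset \RR$, neighborhood preservation forces $h^{(n)}$ to be monotone, and the diffeomorphism $T^n$ is monotone as well, so $s \mapsto T^n(h^{(n)}(s))$ is monotone and the interval $B^{(n)}$ pulls back to a single contiguous block of the lattice, located by binary search exactly as in \cref{propTransitivity}. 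The content of the conjecture is therefore the case $d \geq 2$.

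For higher dimensions I would exploit that a Morse--Smale system has only finitely many hyperbolic periodic orbits, carries a self-indexing Lyapunov function that strictly decreases off the periodic set, and has stable and unstable manifolds that are embedded, of bounded geometric complexity, and meet transversally. The first step is to show that $T^{-n}(B^{(n)})$ is an exponentially thin neighborhood of a bounded-complexity piece of an unstable manifold: iterating the tiny box $B^{(n)}$ backward, hyperbolicity (through the local invariant-manifold coordinates of Grobman--Hartman near each periodic orbit) contracts it in the stable directions at a uniform rate, while the gradient-like structure forces the backward orbit through a bounded number of periodic neighborhoods, so the preimage accumulates on a single tame submanifold. The second step is to cover this tube by $\poly(n)$ rational rectangles whose centers and radii are computable through the efficient discretization, reducing the membership question to the hypotheses of \cref{lemEfficientSetDecidability}.

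The search itself would be a subdivision of the parameter cube $I_n^d$ driven by \cref{lemEfficientDistCheck}: at each scale one retains only the sub-cubes whose image under $T^n \circ h^{(n)}$ lies within the current tube radius of $B^{(n)}$, branching whenever the backward tube splits as it passes a saddle. Neighborhood preservation guarantees that adjacency in $I_n^d$ is reflected in $X$, so the distance-to-target correctly guides the descent, and a polynomial bound on the number of surviving branches at each of the $n$ levels yields a $\poly(n)$ total running time. As a consistency check, Morse--Smale systems have zero topological entropy, so \cref{thm:entropyHardness} already forbids $2^{\Omega(n)}$ deciders; the conjecture asserts the much stronger polynomial bound.

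The hard part will be controlling the passage near the stable manifolds of saddle orbits together with the fact that the resolution scales as $\varepsilon = 1/2^{v(n)} = 1/2^{Cn}$. Near a separatrix the backward tube can be stretched and folded, and two initial points on opposite sides separate only after a long transient, so bounding the number of surviving sub-cubes demands uniform control of the preimage complexity across exponentially fine scales. Standard results on the polynomial entropy of Morse--Smale systems fix $\varepsilon$ before sending $n \to \infty$ and so do not apply directly; the crux is to prove a scale-uniform estimate showing that, even at resolution $2^{-Cn}$, the set $T^{-n}(B^{(n)})$ meets only $\poly(n)$ lattice cells at every level of the subdivision, and to make the invariant-manifold approximations effective enough that these cells remain computable within the efficient-discretization framework.
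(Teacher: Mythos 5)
This statement is left as an open conjecture in the paper (\cref{secProblems}); there is no proof there to compare against, so your proposal must stand on its own, and it does not yet constitute a proof. The central gap is the one you yourself flag at the end: the claim that $T^{-n}(B^{(n)})$ can be covered by $\poly(n)$ rational rectangles --- equivalently, that it meets only $\poly(n)$ lattice cells of $X_{v(n)}$ at every level of your subdivision --- is essentially the content of the conjecture, and nothing in the invariant-manifold discussion establishes it. Zero topological entropy, and even the known polynomial-entropy bounds for Morse--Smale systems, are statements about a fixed scale $\varepsilon$ as $n\to\infty$; here the scale is $2^{-Cn}$, tied to $n$, and near the stable manifold of a saddle the $n$-fold backward image of an exponentially small box can a priori shatter into exponentially many cells at that resolution. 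Without a scale-uniform estimate the subdivision search has no polynomial bound on the number of surviving sub-cubes, and the appeal to \cref{lemEfficientSetDecidability} is unavailable in the form stated: that lemma takes a \emph{fixed} finite union of rational rectangles and runs in time $\poly(n+|\langle A\rangle|)$, whereas your tube around the unstable manifold has a description that itself grows with $n$ and whose computability within the efficient-discretization framework is exactly what needs to be proved.

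Two smaller problems. First, the conjecture hypothesizes only that $(X,T)$ is efficiently discretizable; your use of the discretized inverse to probe $T^{-n}(B^{(n)})$ silently upgrades this to ``efficiently discretizable with inverse,'' which is a strictly stronger hypothesis in the paper's framework (\cref{defFuncDisc2}, \cref{defDynDisc}) even though a Morse--Smale diffeomorphism is invertible as a map. Second, ``a polynomial bound on the number of surviving branches at each of the $n$ levels'' does not by itself give $\poly(n)$ total running time: a branching factor of $2$ per level already yields $2^{n}$ leaves; what is needed is that the total number of surviving cells at \emph{every} level is $\poly(n)$, which is again the missing estimate. The one-dimensional monotonicity argument and the consistency check against \cref{thm:entropyHardness} are reasonable, but they do not touch the $d\geq 2$ case that you correctly identify as the substance of the conjecture. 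What you have is a sensible research plan whose crux --- an effective, scale-uniform complexity bound on backward iterates of exponentially small boxes under Morse--Smale dynamics --- remains open.
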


\begin{conjecture}
If $(X, T)$ is an equicontinuous efficiently discretizable system, then every telic problem coming from the system is contained in $\ccP$.
\end{conjecture}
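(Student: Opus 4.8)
The plan is to combine two ingredients: the zero-entropy consequence of \cref{thm:entropyHardness}, which rules out exponential hardness for free, and the defining uniform equicontinuity estimate, which I would use to build an explicit polynomial-time decider in the spirit of the rigid-rotation argument of \cref{propTransitivity}. First I would record that every equicontinuous system has zero topological entropy: an equicontinuous system on a compact space is conjugate to an isometry, and for an isometry the Bowen metric satisfies $\rho_n = \rho$ for all $n$, so $N(n,\varepsilon)$ is constant in $n$ and the entropy vanishes. Hence, by the contrapositive of \cref{thm:entropyHardness}, no telic problem coming from $(X,T)$ can have the property that every decider runs in time $2^{\Omega(n)}$; in particular each telic problem admits a sub-exponential-time decider. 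The remaining task is to close the gap from sub-exponential to polynomial.

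The key structural input is that equicontinuity supplies a modulus of continuity $\delta(\cdot)$ that is \emph{uniform across all iterates}: for every $\varepsilon>0$ there is a $\delta(\varepsilon)>0$ with $\rho(x,y)\le \delta(\varepsilon) \Rightarrow \rho(T^n x, T^n y)\le \varepsilon$ for all $n\ge 0$. Fix a telic problem $\ccTELIC_{(X,T)}(v,\{h^{(i)}\},\{B^{(i)}\})$ and an instance $1^n$. By \cref{propCorrectSim}, deciding the instance is equivalent to deciding whether some lattice point $s\in I_n^d$ satisfies $\rho(T^n(y_s), B^{(n)})\le \varepsilon_n$, where $\varepsilon_n = 1/2^{v(n)}$ and $y_s$ lifts $h^{(n)}(s)$. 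Setting $\delta_n=\delta(\varepsilon_n)$, the uniform estimate shows the answer for a starting point $h^{(n)}(s)$ is determined by $h^{(n)}(s)$ only up to resolution $\delta_n$: any two starting points within $\delta_n$ have $n$-th iterates within $\varepsilon_n$, and so, up to doubling the target tolerance, give the same answer.

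To convert this into a polynomial-time algorithm I would assume $T$ is a homeomorphism that is efficiently discretizable with inverse, which is justified because an equicontinuous system is conjugate to an isometry and an isometry of a compact space is bijective with equicontinuous inverse iterates. I would then mimic \cref{propTransitivity}: rather than iterate forward from each of the $2^{dn}$ lattice points, pull the target back to $T^{-n}(B^{(n)})$ using $\ccM_{T^{-1}}$. Two-sided equicontinuity forces $T^{-n}(B^{(n)})$ to remain a bounded-diameter target, and the neighborhood-preservation property of the feasibly queryable maps (\cref{defFeasibleQuery}) would let me run the divide-and-conquer search of \cref{lemEfficientSetDecidability} to locate a lattice point $s$ with $h^{(n)}(s)$ in a discretization of $T^{-n}(B^{(n)})$, deciding the instance.

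The hard part is controlling the interaction between the intrinsic metric $\rho'$ in which $T$ is an isometry and the Euclidean metric $\rho$ in which we discretize and compute. Equicontinuity alone only guarantees $\delta(\varepsilon)>0$; a priori the precision $-\log_2 \delta(\varepsilon_n)$ may grow faster than any polynomial in $n$, so $\delta_n$ can be so minuscule that the clustering fails to reduce the $2^{dn}$ candidates below an exponential count, and the pulled-back target $T^{-n}(B^{(n)})$—though small in $\rho'$—may be so distorted in $\rho$ that resolving a lattice point inside it demands near-full precision. Bridging $\rho'$ and $\rho$ for efficiently discretizable systems, showing in effect that efficient discretizability forces $-\log_2\delta(\varepsilon_n)$ to grow only polynomially in $n$, is the crux of the conjecture and precisely where a complete proof must do genuinely new work.
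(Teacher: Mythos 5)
This statement is stated in the paper as a \emph{conjecture}, not a theorem: the paper offers no proof and in fact explicitly remarks just above it that classifying systems whose telic problems lie in $\ccP$ ``remains extremely subtle'' for equicontinuous systems. So there is no paper proof to compare against, and your proposal must stand on its own. It does not: as you yourself concede in the final paragraph, the argument is incomplete at exactly the point where all the difficulty lives. Equicontinuity only gives a qualitative modulus $\delta(\varepsilon)>0$ uniform over iterates; nothing in the hypotheses bounds $-\log_2\delta(\varepsilon_n)$ polynomially in $n$. Without such a bound, the $\delta_n$-clustering can fail to cut the $2^{dn}$ candidate lattice points below an exponential count, and the pulled-back target $T^{-n}(B^{(n)})$ can require super-polynomial precision to resolve, so neither the clustering route nor the \cref{propTransitivity}-style pullback route yields a polynomial-time decider. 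Your suggestion that efficient discretizability itself forces the modulus to be polynomially tame is an unproved claim, and it is essentially a quantitative strengthening of the paper's own open \cref{probEfficientConj} (whether conjugacies of efficiently discretizable systems are themselves efficiently discretizable); you cannot assume it.

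Two further specific problems. First, the reduction to an isometry is both hypothesis-strengthening and computationally unusable: the conjecture does not assume $T$ is a homeomorphism, let alone efficiently discretizable with inverse, and even when the system is conjugate to an isometry, that conjugacy is an abstract homeomorphism with no computability guarantee, so the isometry structure cannot be fed to $\ccM_T$ or $\ccM_{T^{-1}}$. Second, the entropy step buys you less than you claim: the contrapositive of \cref{thm:entropyHardness} only rules out telic problems for which \emph{every} decider runs in time $2^{\Omega(n)}$; it leaves open deciders with super-polynomial but sub-exponential (or merely non-$2^{\Omega(n)}$) running time, which is precisely the regime the paper identifies as the obstruction. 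The overall strategy --- exploit the uniform modulus to collapse the search space, in the spirit of \cref{lemEfficientSetDecidability} --- is a reasonable one to pursue, but the quantitative bridge between the dynamical modulus and the bit-precision of the discretization is the genuinely new work the conjecture demands, and your proposal does not supply it.
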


\begin{conjecture}
Let $(X, T)$ be an efficiently discretizable topological dynamical system. If $(X, T)$ admits an intractable telic problem, then there is a closed subset $E \subset X$ on which the system has sensitivity to initial conditions.
\end{conjecture}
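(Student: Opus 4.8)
The plan is to prove the contrapositive: assuming that no closed subset of $X$ carries sensitive dependence on initial conditions, I would show that every telic problem coming from $(X,T)$ is tractable, so that the system cannot admit an intractable telic problem. The first step is purely dynamical and consists of upgrading the hypothesis into equicontinuity of the family $\{T^n\}_{n\ge 0}$. Since $X$ is compact, every closed invariant subset contains a minimal subsystem, and by the Auslander--Yorke dichotomy a minimal system that is not sensitive is equicontinuous. The assumption that no closed subset is sensitive therefore forces every minimal subsystem to be equicontinuous, and I would propagate this (together with compactness) to a uniform modulus $\omega$ satisfying $\rho(x,y)\le\delta \implies \rho(T^n x, T^n y)\le \omega(\delta)$ for all $n\ge 0$.

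With equicontinuity in hand, the algorithmic step mirrors the proof of \cref{propTransitivity}. By \cref{defFeasibleQuery} each $h^{(n)}$ is neighborhood preserving; when $\{T^n\}$ is uniformly equicontinuous, the composite $T^n\circ h^{(n)}$ remains neighborhood preserving uniformly in $n$, so neighboring lattice points of $I_n^d$ are carried to neighboring points of $X$ even after $n$ iterations. Thus $T^n\circ h^{(n)}$ satisfies the conditions of a feasibly queryable map (it is polynomial-time computable through $\ccM_T$ and $\ccH$, and neighborhood preserving), and \cref{lemEfficientSetDecidability} applies with $B^{(n)}$ in the role of the target set: on input $1^n$ one locates in time $\poly(n)$ a point $s\in I_n^d$ with $T^n_{v(n)}(h^{(n)}(s))\in B^{(n)}_{v(n)}$, or certifies that none exists. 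Hence every telic problem lies in $\ccP$, completing the contrapositive.

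The main obstacle is quantitative. Equicontinuity only furnishes a qualitative modulus $\omega$, whereas the search must resolve the dynamics at scale $2^{-v(n)}$ uniformly in $n$; for the $k$-ary search to terminate in $\poly(n)$ steps one needs $\omega$ to be polynomially bounded on these scales, and extracting such a polynomial modulus from the purely topological ``no sensitivity'' hypothesis is the crux of the argument. The classical fact that a compact equicontinuous system is isometric in a compatible metric $d^*$ makes each $T^n$ a genuine $d^*$-isometry, but $d^*$ need not be efficiently computable from $\rho$, so it does not by itself supply the needed effective modulus. A secondary difficulty is the globalization in the first step: passing from equicontinuity of every minimal subsystem to a uniform modulus on all of $X$ is delicate when $(X,T)$ is neither minimal nor transitive, and it may be necessary either to strengthen the hypotheses or to argue that the portion of $X$ relevant to any given telic problem already lies in an equicontinuous region.

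I also note that the more direct route through \cref{thm:entropyHardness} appears insufficient. Intractability only yields super-polynomial runtime lower bounds, and adapting the separated-set estimate of that proof produces pairs of orbits that separate at the shrinking scales $2^{-(n+1)}$ rather than at a fixed sensitivity constant $\delta$. Such shrinking-scale separation detects entropy-type expansion but not sensitivity per se, and bridging it to a uniform $\delta$ is precisely the gap that the contrapositive argument is designed to circumvent.
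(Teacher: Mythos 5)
This statement is one of the paper's open conjectures in \cref{secProblems}; the paper supplies no proof, so there is nothing to compare your argument against, and the question is only whether your proposal actually closes the conjecture. It does not, and the gaps you flag yourself are essential rather than technical. First, the dynamical step fails as stated: the Auslander--Yorke dichotomy gives ``not sensitive $\implies$ equicontinuous'' only for \emph{minimal} (or at least transitive) systems, and knowing that every minimal subsystem of $(X,T)$ is equicontinuous does not globalize to equicontinuity of $\{T^n\}$ on $X$. A system such as $x\mapsto x^2$ on $[0,1]$ has only equicontinuous minimal subsystems and arguably no closed sensitive subset, yet is not equicontinuous (points near $1$ eventually separate from the orbit of $1$ by a fixed amount). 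So the contrapositive hypothesis does not deliver the uniform modulus $\omega$ your algorithm needs, and you would have to either find a genuinely different dynamical consequence of ``no closed sensitive subset'' or show that non-equicontinuity of this mild, non-sensitive kind cannot generate hardness --- neither of which is addressed.

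Second, even if equicontinuity were granted, your algorithmic step reduces the conjecture to the assertion that equicontinuous efficiently discretizable systems admit only telic problems in $\ccP$ --- which the paper explicitly states as a \emph{separate open conjecture} in \cref{secProblems}, precisely because of the quantitative obstacle you identify: a qualitative modulus $\omega$ does not let the $k$-ary search of \cref{lemEfficientSetDecidability} resolve the dynamics at scale $2^{-v(n)}$ in $\poly(n)$ steps, and the compatible metric $d^*$ making $T$ an isometry need not be efficiently computable. Your observation that the route through \cref{thm:entropyHardness} is also blocked (super-polynomial lower bounds only give separation at the shrinking scales $2^{-(n+1)}$, not at a fixed sensitivity constant $\delta$) is correct and matches the paper's own remark that the super-polynomial regime is unresolved. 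In short, the proposal is a reasonable plan of attack but rests on one false globalization step and one step that is itself an open problem of the paper, so the conjecture remains open.
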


\begin{conjecture}
Let $(X, T)$ be an efficiently discretizable topological dynamical system. If $(X, T)$ is transitive and admits an intractable telic problem, then the system is topologically mixing.
\end{conjecture}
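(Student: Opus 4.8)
The plan is to prove the contrapositive: assuming $(X,T)$ is transitive but \emph{not} topologically mixing, I would show every telic problem coming from the system lies in $\ccP$, so that the system cannot admit an intractable telic problem. This generalizes \cref{propTransitivity}, where a transitive but non-mixing system (an irrational rotation) was shown to admit only tractable telic problems by exploiting its order-preserving rotational structure.

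First I would split according to the structural dichotomy for transitive systems. Since topological mixing implies total transitivity, a transitive non-mixing system falls into one of two regimes: (a) it is \emph{not} totally transitive, in which case there is an integer $p \geq 2$ and a cyclic decomposition $X = \bigcup_{j=0}^{p-1} X_j$ into closed sets cyclically permuted by $T$ (so $T(X_j) = X_{j+1 \bmod p}$), with $T^p$ transitive on each $X_j$; or (b) it is totally transitive but not mixing. In regime (a) I would exploit the cyclic constraint. For a telic problem $\ccTELIC_{(X,T)}(v,\{h^{(i)}\},\{B^{(i)}\})$, a string $s$ can be a solution at time $n$ only if the class of $h^{(n)}(s)$ advances after $n$ steps into the class containing $B^{(n)}$; since classes advance deterministically mod $p$, this immediately discards all candidates in the wrong classes, leaving a search governed by $T^p$ on a single piece $X_j$. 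An induction on $p$, with base case the totally transitive system $(X_j, T^p)$, then reduces regime (a) to regime (b). To make the class-tracking effective I would use that the $h^{(n)}$ are feasibly queryable and hence neighborhood preserving (\cref{defFeasibleQuery}), so candidates falling in each class form efficiently identifiable blocks, and I would invoke \cref{lemEfficientSetDecidability} for the within-piece search.

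Regime (b) is the main obstacle, and I would split it further by the maximal equicontinuous factor. When the factor is nontrivial, there is a rotation $\pi\colon X \to K$ onto a compact abelian group with $\pi \circ T = R_\alpha \circ \pi$; then the phase of any candidate is pinned down by $\pi(T^n(h^{(n)}(s))) = R_\alpha^{\,n}(\pi(h^{(n)}(s)))$, and, exactly as in \cref{propTransitivity}, the order structure of $K$ together with neighborhood preservation lets me binary-search for the candidates whose phase can reach $\pi(B^{(n)})$, pruning to polynomially many and deciding in $\poly(n)$ time. The genuinely hard case is when the system is weakly mixing but not strongly mixing: the equicontinuous factor is then trivial, so there is no rotation to exploit, and the only handle is the raw failure of mixing, namely open sets $U,V$ and times $n_1 < n_2 < \cdots$ with $T^{n_k}(U) \cap V = \varnothing$.

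Converting this sparse, irregular obstruction into a uniform $\poly(n)$ decider valid at \emph{every} $n$ --- and in particular excluding a hard telic problem whose hard instances avoid the distinguished times $n_k$ --- is the crux of the argument. I expect it to require a quantitative strengthening of weak mixing (controlling the density of the bad time set) or an averaging argument over the return-time structure, and this is the step I anticipate being the principal difficulty; the equicontinuous and cyclic cases above should, by contrast, follow by adapting the pruning and reduction techniques already developed for \cref{propTransitivity} and \cref{thmConjugacyHardness}.
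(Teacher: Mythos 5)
The first thing to say is that the paper does not prove this statement: it appears in \cref{secProblems} explicitly as an open conjecture, so there is no proof of record to compare yours against. Your proposal should therefore be judged as a proposed attack on an open problem, and by your own admission it is not a complete proof --- you flag the weakly-mixing-but-not-mixing case as ``the crux'' and say you ``expect it to require'' further quantitative input. That case is not a technical loose end; it is essentially the entire content of the conjecture. A transitive system with trivial maximal equicontinuous factor that fails to be mixing gives you only a sparse sequence of times $n_k$ with $T^{n_k}(U)\cap V=\varnothing$, and a telic problem is a single language $\{1^n\}$ whose hard instances are free to live entirely off that sequence; nothing in your outline converts the existence of those exceptional times into a uniform $\poly(n)$ decider for all $n$. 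Until that step is supplied, the argument establishes nothing beyond what \cref{propTransitivity} already shows.

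Two further points on the cases you do treat. In regime (a), the existence of a regular periodic decomposition $X=\bigcup_{j} X_j$ for a transitive non-totally-transitive map is a genuine theorem (Banks) that you must cite and whose hypotheses you must check, and even granting it, the reduction to $(X_j,T^p)$ only helps if you can decide, in $\poly(n)$ time, which class $h^{(n)}(s)$ lies in --- the sets $X_j$ are closed sets with possibly complicated boundaries and there is no reason they are efficiently discretizable in the sense of \cref{defFeasibleQuery}. In regime (b) with nontrivial equicontinuous factor, the factor $K$ is an abstract compact abelian group rotation; the binary-search pruning in \cref{propTransitivity} leans on the linear order of $S^1=\RR/\ZZ$ and on the factor map being efficiently discretizable with inverse, neither of which is available for a general $\pi:X\to K$ (consider $K$ a torus or a solenoid, or $\pi$ non-computable). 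So even the ``easy'' branches of your case analysis have unfilled computational hypotheses. The overall strategy --- prove the contrapositive by showing non-mixing forces tractability --- is the natural one and consistent with the spirit of \cref{propTransitivity}, but as written this is a research plan with a correctly identified open core, not a proof.
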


One important problem to solve is to find the ``line of feasibility" dividing the dynamical systems that only admit easy telic problems, with those that admit hard telic problems. It is unclear if admittance of an intractable telic problem implies the system has positive entropy (\cref{thm:entropyHardness1} shows this if any decider must run in \emph{exponential time}, but for super-polynomial time the situation remains unclear).

\begin{problem}\label{probEfficientConj}
If two efficiently discretizable dynamical systems $(X, T)$ and $(Y, S)$ are conjugate, must this conjugacy be efficiently discretizable?
\end{problem}

A positive solution to Problem \ref{probEfficientConj} would in fact illuminate a deep result: conjugacies between efficiently discretizable dynamical systems must be \emph{computationally} simple. A corollary would be that if one system admits an intractable telic problem while another does not, they are not conjugate.
An answer to this question would then be tremendously valuable in determining what kind of form we can expect reductions between telic problems to take, which itself would lend itself to determining whether there exists hard telic problems.

A related problem is

\begin{problem}
If two efficiently discretizable dynamical $(X, T)$ and $(Y, S)$ are conjugate (where the conjugacy need not be efficiently discretizable), does this imply every telic problem coming from $(X, T)$ reduces (in polynomial-time) to one coming from $(Y, S)$ and vice versa?
\end{problem}

Any insight into the answers of these problems would be very interesting, revealing intimate relationships between dynamics and computation.

%%%%%%%%%%%%%%%%%%%%%%%%%%%%%%%%%%%%%%%%%%%%%%%%%%%%
\bibliographystyle{abbrv}
\bibliography{references}

\end{document}